	\titleformat{\subsection}[runin]{\normalfont\bfseries}{\thesubsection.}{.5em}{}[. ~]
		\titlespacing{\subsection}{0pt}{1.5ex plus .1ex minus .2ex}{0pt}
\let\OLDthebibliography\thebibliography
\renewcommand\thebibliography[1]{
  \OLDthebibliography{#1}
  \setlength{\parskip}{0pt}
  \setlength{\itemsep}{0pt plus 0.3ex}
}
\newtheoremstyle{thmstyle}
{\topsep}%space above
{\topsep}%space below
{\itshape}%bodyfont
{0pt}%indent
{\bfseries}%headfont
{.}%punctuation
{5pt plus 1pt minus 1pt}%space after head
{#2.\hspace{3pt}#1\thmnote{~\textnormal{(#3)}}}%theoremheadspec
\newtheoremstyle{defistyle}
{\topsep}
{\topsep}
{}
{0pt}
{\bfseries}
{.}
{5pt plus 1pt minus 1pt}
{#2.\hspace{3pt}#1#3}
\theoremstyle{thmstyle}
\newtheorem{thm}[subsection]{Theorem}
\newtheorem{lemma}[subsection]{Lemma}
\newtheorem{prop}[subsection]{Proposition}
\newtheorem{cor}[subsection]{Corollary}
\theoremstyle{defistyle}
\newtheorem{defi}[subsection]{Definition}
\newtheorem{rmk}[subsection]{Remark}
\newtheorem{notation}[subsection]{Notation}
\newtheorem{example}[subsection]{Example}
\newcommand{\trace}{\mathop{\mathrm{tr}}}
\newcommand{\alttrace}{\mathop{{\mathsf{utr}}}}
\newcommand{\tance}{\mathop{\mathrm{ta}}}
\newcommand{\Arg}{\mathop{\mathrm{Arg}}}
\newcommand{\SU}{\mathop{\mathrm{SU}}}
\newcommand{\PU}{\mathop{\mathrm{PU}}}
\newcommand{\BV}{{\mathrm B}\,V}
\newcommand{\SV}{{\mathrm S}\,V}
\newcommand{\EV}{{\mathrm E}\,V}
\newcommand{\real}{\mathop{\mathrm{Re}}}
\newcommand{\imag}{\mathop{\mathrm{Im}}}
\newcommand{\PCV}{\mathbb{P}_{\mathbb C}V}
\newcommand{\PV}{\mathbb{P}V}
\begin{document}
\title{The length of $\PU(2,1)$ relative to special elliptic isometries with fixed
parameter}
\author{Felipe A. Franco}
\date{}
\maketitle

\begin{abstract}
Generalizing the involution length of the complex hyperbolic plane,
we obtain that the $\alpha$-length of $\PU(2,1)$ is $4$, that is, 
every element of $\PU(2,1)$ can be decomposed as the product of at most $4$
special elliptic isometries with parameter $\alpha$.
We also describe the isometries that can be written as the product of $2$ or $3$
such special elliptic isometries.
\end{abstract}

\section{Introduction}

This work concerns the decomposition of isometries of the complex hyperbolic
plane $\mathbb H_{\mathbb C}^2$ in the product of special elliptic isometries, 
also known as complex reflections. Special elliptic isometries (see 
Subsection~\ref{subsec:isometries}) can be seen as
rotations either around a point or around a complex geodesic. 
They have a {\it center\/} (a nonisotropic point) and a 
{\it parameter\/} $\alpha$ (a unit complex number that is not a cube root of unity);
a special elliptic isometry with center $p$ and parameter $\alpha$ is 
denoted by $R_\alpha^p$. The isometry $R_\alpha^p$ acts on 
$\mathbb H_{\mathbb C}^2$ as a rotation around $p$ (if $p\in\mathbb H_{\mathbb C}^2$) 
by the angle $\Arg\alpha^{3}$ or around a complex geodesic (whose polar point is 
the positive point~$p$) by the angle~$\Arg\alpha^{-3}$. 

Here we approach the problem of finding the smallest number $m$ 
such that every element of $\PU(2,1)$, the group of orientation-preserving
isometries of $\mathbb H_{\mathbb C}^2$, admits a decomposition as the product of 
at most $m$ special elliptic isometries with parameter $\alpha$, for a given $\alpha$.
We call this number the {\it $\alpha$-length\/} of $\PU(2,1)$
(see Definition~\ref{defi:adecomp}). Analogously
we can consider the $\alpha$-length of a given isometry and, in this way, 
the $\alpha$-length of $\PU(2,1)$ is the maximum of the $\alpha$-lengths of all of
its elements.

As orientation-preserving involutions of $\mathbb H_{\mathbb C}^2$ are special 
elliptic isometries with parameter satisfying $\alpha^3=-1$, the $\alpha$-length is closely related to the idea of
{\it involution length\/}:
for a symmetric Riemannian space $X$, the involution length is 
defined similarly to the $\alpha$-length, but considering decompositions into
the products of involutions of $\mathrm{Isom}(X)$.
In~\cite{Will2017} Will and Paupert obtained that the (orientation-preserving)
involution length of $\PU(2,1)$ is $4$ and that the (orientation-preserving)
involution length of $\PU(n,1)$, $n\geq 2$, is at most $8$. 
Allowing orientation-reversing involutions, Falbel and Zocca obtained 
in~\cite{FZ99} that the involution length of $\PU(2,1)$ is $2$, and it turns out 
that this is also true for $\PU(n,1)$, $n\geq 2$ (a proof of this fact 
can be found in~\cite{GT2016}). The involution length of symmetric spaces 
of constant curvature was obtained by Basmajian and Maskit in~\cite{BB2012}.  

In this context, for fixed parameters $\alpha_1$ and $\alpha_2$,
we determine every isometry of $\mathbb H_{\mathbb C}^2$ that admits a decomposition 
as the product of two special elliptic isometries, one with parameter $\alpha_1$
and the other with parameter $\alpha_2$ (Propositions~\ref{prop:a1a2loxpardecomp}, and~\ref{prop:a1a2eldecomp}). (In $\SU(2,1)$, 
such a decomposition is of the form $F=\delta R_{\alpha_2}^{p_2}R_{\alpha_1}^{p_1}$,
where $\delta$ is a cube root of unity.) The isometries admitting
such a decomposition are related to lines tangent to Goldman's deltoid 
(see Subsection~\ref{subsec:tangentlines}). Moreover,
deciding if an elliptic isometry admits such a decomposition is more
involved and gives rise to the study of the interaction between
these tangent lines and the {\it unfolded
trace\/}, introduced in Subsection~\ref{subsec:alttrace} --- a trace-like function 
that, unlike the usual trace, can distinguish the classes of 
regular elliptic isometries. This interaction is also central to
obtain our main result, Theorem~\ref{thm:alphalength}. The unit complex
numbers in the following theorem are considered up to a cube root of unity
(see Corollary~\ref{cor:deltaalpha}).

\begin{thm}
\label{thm:alphalength}
The\/ $\alpha$-length of\/ $\PU(2,1)$ is\/ $4$, for any
parameter\/ $\alpha$. Moreover, writing\/
$\alpha=e^{ai}$, if\/
$0<a\leq \frac{4\pi}{27}$ or\/ $\frac{14\pi}{27}\leq a<\frac{2\pi}3$,
then every isometry, except possibly\/ $2$-step unipotent ones\/ 
{\rm(}see\/ {\rm Remark~\ref{rmk:2stepdecomp})}, admits 
a decomposition as the product of three special elliptic isometries with parameter\/ 
$\alpha$. When\/ $\frac{4\pi}{27}<a<\frac{14\pi}{27}$, there exist regular and
special elliptic isometries that do not admit such a decomposition. 
Finally, 
if\/ $\alpha^3=\pm 1$, i.e.,\/ $a=\frac{2\pi}9,\frac{\pi}3,\frac{4\pi}9$, then every\/ 
$2$-step unipotent isometry is the product of
three special elliptic isometries with parameter\/~$\alpha$.
\end{thm}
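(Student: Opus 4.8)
The plan is to carry everything into Goldman's trace plane refined by the unfolded trace $\alttrace$, and to organize the induction on the number of factors around the identity
\[
\trace\bigl(R_\beta^{p}F\bigr)=\beta\,\trace F+(\beta^{-2}-\beta)\,\frac{\langle Fp,p\rangle}{\langle p,p\rangle},\qquad F\in\SU(2,1),\ \langle p,p\rangle\neq 0 .
\]
Thus composing $F$ with one extra complex reflection of parameter $\beta$ translates the trace by an affine function of the Rayleigh quotient $\langle Fp,p\rangle/\langle p,p\rangle$, and as $p$ runs over the nonisotropic vectors this quotient fills the indefinite numerical range $W(F)$. So the first step is to compute $W(F)$ for each conjugacy/Jordan type: it is all of $\mathbb C$ (or $\mathbb C$ minus a line) for loxodromic isometries, the line $1+i\mathbb R$ for $2$-step unipotent ones, the union of two opposite cones with vertex at the unit eigenvalue attached to the negative eigenvector for regular elliptics, and a line minus a bounded segment for non-regular elliptics (complex reflections of any parameter); in every case other than $2$-step unipotent it is unbounded.

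By Propositions~\ref{prop:a1a2loxpardecomp} and~\ref{prop:a1a2eldecomp}, an isometry is a product of two special elliptic isometries of parameter $\alpha$ precisely when its trace lies on the tangent line $\ell_\alpha$ to Goldman's deltoid at the point of parameter $2a$ (equivalently, on the chord through the deltoid points of parameters $-a$ and $\pi-a$), with the extra requirement, when the isometry is elliptic, that its unfolded trace lie on the distinguished lift of $\ell_\alpha$. Combined with the trace identity this gives the criterion to be iterated: $F$ is a product of $k+1$ special elliptic isometries of parameter $\alpha$ iff $R_{\alpha^{-1}}^{p}F$ is a product of $k$ of them for some nonisotropic $p$; for $k=2$ this holds as soon as the affine image $\alpha^{-1}\trace F+(\alpha^{2}-\alpha^{-1})W(F)$ meets $\ell_\alpha$ at a loxodromic point (where there is no unfolded-trace obstruction), and otherwise exactly when it meets $\ell_\alpha$ at an elliptic point over which the unfolded trace of $R_{\alpha^{-1}}^{p}F$ can be made to land on the distinguished lift.

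From here the bound ``$\alpha$-length $\le 4$'' and the easy half of the trichotomy are cheap. Since a line always exits the bounded deltoid and $W(F)$ is unbounded for every type except $2$-step unipotent (where it is still a full line), for any $F$ one can choose $p$ with $R_{\alpha^{-1}}^{p}F$ loxodromic; a loxodromic isometry with trace on $\ell_\alpha$ is automatically a product of two, so every loxodromic and every screw- or ellipto-parabolic isometry is a product of three, and every isometry is a product of at most four. For $0<a\le\frac{4\pi}{27}$ or $\frac{14\pi}{27}\le a<\frac{2\pi}{3}$ one checks that $\ell_\alpha$ is positioned so that the double cone of any regular elliptic $F$, and the punctured line $W(F)$ of any complex reflection, after the affine map always reach $\ell_\alpha$ either at a loxodromic point or at an elliptic point where the unfolded trace can be matched; with the previous sentence this gives the product-of-three statement for every isometry other than a $2$-step unipotent one.

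Two genuinely delicate points remain. First, for $\frac{4\pi}{27}<a<\frac{14\pi}{27}$ the direction of $\ell_\alpha$ no longer lies, for certain near-deltoid regular (and special) elliptics $F$, in the angular interval modulo $\pi$ of the double cone $W(F)$; for these the cone-image meets $\ell_\alpha$ only in a bounded segment inside the elliptic part of $\ell_\alpha$, and one must compute the unfolded trace of $R_{\alpha^{-1}}^{p}F$ along the one-dimensional locus $\{p:\trace(R_{\alpha^{-1}}^{p}F)\in\ell_\alpha\}$ and check that it never reaches the distinguished lift, thereby exhibiting explicit regular and special elliptic isometries that are not products of three, so that the $\alpha$-length is exactly $4$ in the middle range. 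Second, for a $2$-step unipotent $F$ the range $W(F)=1+i\mathbb R$ forces $\alpha^{-1}\trace F+(\alpha^{2}-\alpha^{-1})W(F)$ to be the line through the deltoid point of parameter $-a$ in the direction $i(\alpha^{2}-\alpha^{-1})$, which meets $\ell_\alpha$ (and its $\omega$- and $\omega^{2}$-rotates, arising from the cube root $\delta$ in the $\SU(2,1)$ decomposition) only in finitely many points, at each of which $R_{\alpha^{-1}}^{p}F$ is forced to be a non-semisimple isometry with eigenvalues a cube-root multiple of $\alpha^{2},\alpha^{-1},\alpha^{-1}$; deciding whether that particular parabolic is itself a product of two special elliptic isometries of parameter $\alpha$ is exactly Proposition~\ref{prop:a1a2loxpardecomp}, and the computation singles out $a\in\{\frac{2\pi}{9},\frac{\pi}{3},\frac{4\pi}{9}\}$, i.e. the cases $\alpha^{3}=\pm1$ (Remark~\ref{rmk:2stepdecomp}); for every other $\alpha$ the $2$-step unipotent isometries are not products of three, which is what forces the $\alpha$-length to be $4$ already when $0<a\le\frac{4\pi}{27}$ or $\frac{14\pi}{27}\le a<\frac{2\pi}{3}$, since there $\alpha^{3}\ne\pm1$. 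The main obstacle is the first of these points: the quantitative control of $\ell_\alpha$ against the unfolded-trace locus — the ``interaction between the tangent lines and the unfolded trace'' of the introduction — which is what produces the two exceptional values $\frac{4\pi}{27}$ and $\frac{14\pi}{27}$ that the soft boundedness arguments cannot see.
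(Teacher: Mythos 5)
Your setup is sound as far as it goes: the identity $\trace(R_\beta^pF)=\beta\trace F+(\beta^{-2}-\beta)\langle Fp,p\rangle/\langle p,p\rangle$ is correct, the reduction ``$F$ has $\alpha$-length $\le k+1$ iff some $R_{\overline\alpha}^pF$ has $\alpha$-length $\le k$'' is exactly the paper's, and your unboundedness-of-the-numerical-range argument for the bound $\le 4$ (and for loxodromic/regular parabolic classes having length $\le 3$) is a legitimate variant of the paper's argument, which instead makes $R_{\overline\alpha}^pF$ loxodromic by a reflection argument in a stable Poincar\'e disk and then invokes Proposition~\ref{prop:parlox3}. Your treatment of $\alpha^3=\pm1$ via the coincidence $\tau_{\alpha,\alpha}(0)=\omega^j\tau_{\alpha,\alpha}(1)$ is precisely Proposition~\ref{prop:2stepdecomp}. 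But the heart of the theorem is not reached. Your only analytic handle on $R_{\overline\alpha}^pF$ is its trace, and for elliptic targets the trace does not determine the conjugacy class: the obstruction lives in the two-dimensional space $\rho(\mathcal E)$ of angle pairs (equivalently, in the unfolded trace), and nothing in your scheme computes which of the three classes over a given trace $R_{\overline\alpha}^pF$ occupies as $p$ ranges over a four-real-dimensional family. The sentences ``one checks that $\ell_\alpha$ is positioned so that\dots'' and ``one must compute the unfolded trace along the locus\dots and check that it never reaches the distinguished lift'' are exactly the content of the theorem, including the derivation of the thresholds $\frac{4\pi}{27}$ and $\frac{14\pi}{27}$, and they are not performed. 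The paper replaces this pointwise analysis with structural rigidity: $\mathsf E_{\alpha^{(3)}}$ is a union of images of the product map $\overline\mu$, which is proper and sends irreducible pairs to interior points, so $\rho(\mathcal E)$ splits into chambers bounded by $\alttrace^{-1}(\ell_{\alpha^3}\cup\ell_{\omega\alpha^3}\cup\ell_{\omega^2\alpha^3})$, each entirely full or entirely empty (Lemma~\ref{lemma:r1parabolic}, Lemma~\ref{lemma:reduciblepair}, Proposition~\ref{prop:a3walls}, all resting on bending deformations); a continuity argument in $\alpha$ then reduces everything to the finitely many parameters $a=k\pi/27$, each checked by Propositions~\ref{prop:nondiag} and~\ref{prop:diagchamb}. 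Without a substitute for the ``full or empty'' dichotomy, verifying individual classes cannot yield conclusions about open regions of classes, and the exceptional values $\frac{4\pi}{27}$, $\frac{14\pi}{27}$ never emerge from your argument.

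A second, sharper problem is your claim that for $\alpha^3\ne\pm1$ the $2$-step unipotent isometries are \emph{not} products of three reflections. The paper explicitly leaves this open (Remark~\ref{rmk:2stepdecomp}), and the theorem asserts nothing in that direction. Your finite-intersection argument does not close it: the trace locus $2\overline\alpha+\alpha^2+i\mathbb R(\alpha^2-\overline\alpha)$ meets each of $\ell_{\alpha^2},\ell_{\omega\alpha^2},\ell_{\omega^2\alpha^2}$, and only at the first intersection (the point $\tau_{\alpha,\alpha}(0)$, reached when $p\perp v$) do you actually identify the isometry $R_{\overline\alpha}^pF$ as the excluded ellipto-parabolic class of Proposition~\ref{prop:a1a2loxpardecomp}. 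At the other two intersection points you would have to show either that the trace lies in $\Delta$ and that \emph{every} $p$ in the (positive-dimensional) fiber over that trace produces a class outside $\mathsf E_{\alpha^{(2)}}$, or rule out the loxodromic case; if any such intersection falls outside $\Delta$ the conclusion reverses. Smaller issues: you write $\ell_\alpha$ where $\ell_{\alpha^2}$ is meant for products of two parameter-$\alpha$ reflections; the cube-root ambiguity $\delta\in\Omega$ must be carried through all cases, not only the unipotent one; and the asserted shapes of the indefinite numerical ranges are stated without proof (the regular elliptic one, for instance, is a double cone with the open triangle of the eigenvalues removed, not two full cones).
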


The general strategy to prove this theorem follows~\cite{spell} and~\cite{Will2017}.
In~\cite{spell} it was described every (generic) relation of length at most~$4$
between special elliptic isometries and the space 
$S_{\pmb\alpha,\pmb\sigma,\tau}$ of (strongly regular) triples 
$p_1,p_2,p_3$ of nonisotropic points that
satisfy $\trace R_{\alpha_3}^{p_3}R_{\alpha_2}^{p_2}R_{\alpha_1}^{p_1}=\tau$ for a fixed
triple of parameters $\pmb\alpha=(\alpha_1,\alpha_2,\alpha_3)$, a fixed triple of
signs $\pmb\sigma=(\sigma_1,\sigma_2,\sigma_3)$ (the signatures $\sigma_i$ of the points
$p_i$), and fixed~$\tau\in\mathbb C$.
Here we prove that, given a triple of parameters $\pmb\alpha$ and $\tau\in\mathbb C$,
there exists a triple of signs $\pmb\sigma$ such that the space 
$S_{\pmb\alpha,\pmb\sigma,\tau}$ is nonempty (proof of Proposition~\ref{prop:parlox3}). 
From the nature of the space of $\PU(2,1)$-conjugacy classes 
(see~Subsection~\ref{subsec:puclasses}), this is sufficient to prove that the
$\alpha$-length of regular parabolic (ellipto-parabolic or $3$-step unipotent, see 
Definition~\ref{defi:regular}) and loxodromic isometries is at most $3$,
for any parameter $\alpha$ (Proposition~\ref{prop:parlox3}), but this is not
sufficient for the general case (particularly, for elliptic isometries).

To deal with the decomposition of the remaining elliptic isometries, 
following~\cite{Will2017}, we use the {\it product map\/} 
$\widetilde\mu:C_1\times C_2\to\mathcal G$, where~$\mathcal G$ denotes the space 
of all $\PU(2,1)$-conjugacy classes, $C_i$ denotes semisimple conjugacy classes, 
$i=1,2$, and $\widetilde\mu(A,B):=[AB]$, where $[F]$ stands for the conjugacy class 
of~$F$. (As will be clear in Subsection~\ref{subsec:productmap}, we actually consider
the projection~$\overline\mu$ of the product map
over the maximal Hausdorff quotient~$c(\mathcal G)$ of $\mathcal G$.)

The idea is the following: suppose that $C_1$ is the class of a special elliptic isometry 
with parameter $\alpha$ and $C_2$ is the class of a semisimple isometry that admits a 
decomposition as the product of $2$ special elliptic isometries with parameter~$\alpha$.
Then any isometry $F$ with $[F]\in\widetilde\mu(C_1\times C_2)$ admits a decomposition
as the product of $3$ special elliptic isometries.
In this way, to obtain all elliptic isometries admitting such length $3$ decomposition, 
it remains to describe the union of all possible images 
of the product map, varying the semisimple classes $C_1$ and $C_2$.
Properties of the product map $\overline\mu$ and its image (and mainly 
its intersection with $\rho(\mathcal E)$, where $\mathcal E$ is the space of 
elliptic conjugacy classes) where described in~\cite{FW2009,Paupert2007} and we 
list some of this properties in Subsection~\ref{subsec:productmap}.
If at most one of the classes $C_1$ and $C_2$ is the class of a special elliptic 
isometry, then the image $\overline\mu(C_1\times C_2)$ is the union of
closed {\it chambers\/} bounded by {\it reducible walls\/}; the intersection
of the reducible walls with $\rho(\mathcal E)$
is composed by finitely many line segments of slopes $-1,\frac{1}2,2$.
The interaction between tangent lines to Goldman's deltoid and the unfolded trace
help us describe the union of the images mentioned above and cast light over the 
nature of the segments of slopes $-1,\frac{1}2,2$ in $\rho(\mathcal E)$:
the image under the unfolded trace of segments of slopes $-1,\frac{1}2,2$
in $\mathcal E$ are subsegments of lines tangent to the deltoid. 

The last ingredient to prove the main theorem are {\it bending relations\/} 
(see Subsection~\ref{subsec:fullempty} and \cite[Section~4]{spell}). Such relations can 
be used to deform a given decomposition of the form
$F=\delta R_{\alpha_n}^{p_n}\ldots R_{\alpha_1}^{p_1}$, where $\delta$ is a cube root 
of unity, and they appear as natural coordinates of the 
space~$S_{\pmb\alpha,\pmb\sigma,\tau}$ (see~\cite[Theorem~5.4]{spell}).
In the context of Theorem~\ref{thm:alphalength}, bending the decomposition 
$F=\delta R_{\alpha}^{p_3}R_{\alpha}^{p_2}R_{\alpha}^{p_1}$, we can decide 
whether~$[F]$ is an interior point in the union of all images of~$\widetilde\mu$ or not.
This allows us to avoid having to directly obtain such a union, as is done in 
\cite{Will2017},
and leads to the description in Proposition~\ref{prop:a3walls} which, together with 
the results in Subsection~\ref{subsec:fullchambers}, proves our main theorem. 

\subsection*{Acknowledgements} I would like to thank Carlos Grossi and Samanta
Silva for the discussions relating to this work. 
Also, I would like to thank the anonymous referee for the many useful comments
and suggestions.

\section{Complex hyperbolic geometry}
\label{sec:hypgeo}

In this section, following \cite{SashaGrossi2011,SGG2011,Goldman1999}, we 
briefly describe the complex hyperbolic plane and its isometries.

Let $V$ be a $3$-dimensional $\mathbb C$-linear space equipped with
a Hermitian form of signature $++-$. We consider the projectivization
$\PV:=\PCV$ divided into negative, positive and isotropic points:
$$\BV := \left\{p\in{\mathbb P}V \,|\, \left<p,p\right> <0\right\},\ \ 
\SV := \left\{p\in{\mathbb P}V \,|\, \left<p,p\right> =0\right\},\ 
\EV:=\{p\in\mathbb PV\,|\, \left<p,p\right>>0\}.$$
Here and throughout this paper we denote a 
point in $\PV$ and a representative of it in $V$ by the same letter, but no
confusion should arise.
We denote the {\it signature}\/ of a point $p\in\PV$ by $\sigma p$, i.e.,
$\sigma p$ is respectively $-1,0,1$ if $p$ is negative, isotropic, or positive.

If $p\in\PV$ is a nonisotropic point, we have the identification
$\mathrm T_p\mathbb PV\simeq{\mathrm{Lin}}_{\mathbb C}(\mathbb Cp,p^\bot)$,
where $p^\perp$ is the linear subspace of $V$
orthogonal to $p$. Through this identification we can define
a Hermitian metric in $\BV$ and $\EV$ by
$$\langle t_1,t_2\rangle:=
-\frac{\langle t_1(p),t_2(p)\rangle}{\langle p,p\rangle},$$
where $t_1,t_2\in\mathrm{Lin}_{\mathbb C}(\mathbb Cp,p^\perp)$ are tangent
vectors to $\PV$ at $p$. This metric is positive
definite in $\BV$ and has signature $+-$ in $\EV$. Thus, the real part of
this metric defines a Riemannian metric in~$\BV$ and a pseudo-Riemannian
metric in $\EV$. The $4$-ball $\BV$ equipped with such Riemannian metric is the
{\it complex hyperbolic plane\/}~$\mathbb H_\mathbb C^2$. Its ideal boundary, also 
called {\it absolute\/}, is the $3$-sphere~$\SV$.

\smallskip

The projectivization $\mathbb PW$ of a $2$-dimensional complex subspace
$W\leq V$ is called a {\it complex line\/}. Given a complex line $L$,
the point $c\in\mathbb PV$ such that $L=\mathbb Pc^\perp$ is the {\it polar
point\/} of $L$. We say that a complex line $L=\mathbb Pc^\perp$ is
{\it hyperbolic\/}, {\it spherical\/}, {\it Euclidean\/} if $c\in\EV$, $c\in\BV$,
$c\in\SV$, respectively. A {\it complex geodesic\/} is a set of the form
$L\cap\BV$, where $L$ is a hyperbolic complex line. For distinct points
$p_1,p_2\in\PV$, we denote by $\mathrm L(p_1,p_2)$ the complex line
$\mathbb P(\mathbb Cp_1+\mathbb Cp_2)$.

The {\it tance\/} between two nonisotropic points $p_1,p_2\in\mathbb PV\setminus\SV$
is given by
$$\tance(p_1,p_2):=\frac{\langle p_1,p_2\rangle\langle p_2,p_1\rangle}
{\langle p_1,p_1\rangle\langle p_2,p_2\rangle}.$$
By Sylvester's criterion, the line $\mathrm L(p_1,p_2)$, with $p_1,p_2$
distinct nonisotropic points in $\PV$, is hyperbolic
iff $\tance(p_1,p_2)>1$ or $\tance(p_1,p_2)<0$; spherical iff $0<\tance(p_1,p_2)<1$;
and Euclidean iff $\tance(p_1,p_2)=1$.

\subsection{Isometries of the complex hyperbolic plane}
\label{subsec:isometries}
Consider the special unitary group $\SU(2,1)$ given by the elements in 
$\mathrm{GL}\,V$ that preserve the Hermitian form of $V$ and have determinant $1$. 
The group of orientation-preserving isometries of $\mathbb H_{\mathbb C}^2$
is the projectivization $\PU(2,1)$ of $\SU(2,1)$, i.e.,
$\PU(2,1)=\SU(2,1)/\{1,\omega,\omega^2\}$, where $\omega:=e^{2\pi i/3}$.
We also refer to elements of $\SU(2,1)$ as isometries.

We say that a nonidentical isometry in $\PU(2,1)$ is {\it elliptic}\/ if it
fixes a point in $\BV$, {\it parabolic}\/ if it fixes
exactly one point in $\SV$, and {\it loxodromic}\/ if it fixes exactly two points in
$\SV$. Elliptic and parabolic isometries are further
divided into subtypes as follows.

\smallskip

Let $I\in\SU(2,1)$ be an elliptic isometry and let $c\in\BV$ be an
$I$-fixed point. Thus, the spherical complex line $\mathbb P c^\perp$ is
$I$-stable and $I$ fixes another point $p$ in this line. Clearly, 
$I$ must also fix the point $\tilde{p}\in\mathbb Pc^\perp$ orthogonal to $p$.
We obtain an orthogonal basis $\{c,p,\tilde{p}\}$ for $V$ given by eigenvectors
of $I$. If $\mu_1,\mu_2,\mu_3\in\mathbb C$ are the eigenvalues of 
$c,p,\tilde{p}$, respectively, we have $\mu_1\mu_2\mu_3=1$ and 
$|\mu_i|=1$. We say that the elliptic isometry $I$ is {\it regular}\/ 
if the eigenvalues $\mu_i$ are pairwise distinct; 
otherwise, we say that it is {\it special}.

Throughout the paper we will denote by $\mathbb S^1$ the set
of unit complex numbers and by~$\Omega:=\{1,\omega,\omega^2\}$,
where $\omega:=e^{2\pi i/3}$, the set of cubic roots of unity. 
Every special elliptic isometry can be written in the form (see~\cite{Mostow80})
\begin{equation}
\label{eq:specialelliptic}
R_\alpha^p:x\mapsto (\alpha^{-2}-\alpha){\frac{\left<x,\,p\right>}{\left<p,\,p\right>}}p+
\alpha x
\end{equation}
for some $p\in\PV\setminus\SV$ and $\alpha\in\mathbb S^1\setminus\Omega$. We say that
$p$ is the {\it center\/} and that $\alpha$ is the {\it parameter\/} of~$R_\alpha^p$.

\smallskip

Parabolic isometries are divided into three subtypes. We say that a parabolic isometry 
is unipotent if it lifts to a unipotent element of $\SU(2,1)$. Unipotent isometries 
can be either {\it $2$-step\/} or {\it $3$-step unipotent\/}; 
$2$-step unipotent isometries fix an isotropic point and pointwise
fix its Euclidean polar complex line, and the $3$-step ones fix an isotropic 
point and no other point (so, they do not preserve any hyperbolic complex line). 
Parabolic isometries that are not unipotent are called 
{\it ellipto-parabolic\/}; they fix an isotropic point and also fix a positive point in
the Euclidean stable line.

\smallskip

Consider the function $f:\mathbb C\to\mathbb R$ given by
$$f(z):=|z|^4-8\real(z^3)+18|z|^2-27,$$
and denote $\Delta:=\{z\in\mathbb C\mid f(z)\leq 0\}$,
$\Delta^{\circ}:=\{z\in\mathbb C\mid f(z)<0\}$, and
$\partial\Delta:=\{z\in\mathbb C\mid f(z)=0\}$.
Given a nonidentical isometry $I\in\SU(2,1)$, we have:

\smallskip

$\bullet$ $I$ is regular elliptic iff $\trace I\in\Delta^\circ$;

$\bullet$ $I$ is loxodromic iff $\trace I\in\mathbb C\setminus\Delta$;

$\bullet$ if $I$ is elliptic, then it is special elliptic iff $\trace I\in\partial\Delta$;

$\bullet$ $I$ is parabolic iff $I$ is not elliptic and $\trace I\in\partial\Delta$.
Moreover, if $\trace I\not\in\{3,3\omega,3\omega^2\}$, then $I$ is ellipto-parabolic;

$\bullet$ $I$ is unipotent iff $I\in\{3,3\omega,3\omega^2\}$.

\begin{defi}
\label{defi:regular}
An isometry is {\it regular\/} if its eigenspaces have dimension $1$, i.e., it does 
not pointwise fix a complex line. (This definition coincides with the one
in~\cite{steinberg1974}.)
\end{defi}

In other words, a nonidentical isometry in $\PU(2,1)$ is regular if it is 
neither special elliptic nor $2$-step unipotent.

\section{Conjugacy classes and the product map}
\label{sec:conjclasses}

In this section we describe the space of $\PU(2,1)$-conjugacy classes. 
Differently from the case of the Poincar\'e disk, the trace of an isometry
does not determine its conjugacy class (but, as we will see here, it `almost' does).

\smallskip

The trace determines the $\SU(2,1)$-conjugacy class of loxodromic isometries, i.e., 
two loxodromic isometries in $\SU(2,1)$ with the same trace
$\tau\in\mathbb C\setminus\Delta$ are $\SU(2,1)$-conjugated.
Now, this is not true for values of trace in $\Delta$. 

Given two elliptic isometries $F_1,F_2\in\SU(2,1)$ with same trace
or, equivalently, same eigenvalues (see \cite[Proof of Lemma~6.2.5]{Goldman1999}),
$F_1$ and $F_2$ are 
$\SU(2,1)$-conjugated iff their negative fixed points have
the same eigenvalue. Therefore, elliptic classes are distinguished by the
{\it types\/} of its eigenvalues: we say that an eigenvalue $\alpha\in\mathbb S^1$ 
of an isometry $F\in\SU(2,1)$ is of {\it negative type\/} if there exists a
negative eigenvector associated with $\alpha$, i.e., if there exists $v\in V$ 
with $Fv=\alpha v$ and $\langle v,v\rangle<0$. As discussed above, two elliptic isometries
with same trace and same negative type eigenvalue are conjugated.
Since regular elliptic isometries have three distinct eigenvalues (three
possible values for the negative type eigenvalue), for each
$\tau\in\Delta^\circ$, there exists three distinct $\SU(2,1)$-conjugacy classes
of trace $\tau$. 

Now, for $\tau\in\partial\Delta\setminus\{3,3\omega,3\omega^2\}$
(remember that $\omega:=e^{2\pi i/3}$), there exists three 
distinct $\SU(2,1)$-conjugacy classes of isometries with trace $\tau$. 
Two distinct classes of special elliptic isometries (distinguished as above by their
negative type eigenvalue or, equivalently, by the signature of their centers), 
and one class of ellipto-parabolic isometries.

Finally, for $\tau\in\{3,3\omega,3\omega^2\}$, we have three distinct 
nonidentical $\SU(2,1)$-conjugacy classes of isometries with trace~$\tau$: two classes 
of $2$-step unipotent isometries, and one of $3$-step unipotent isometries.

\smallskip

Now we focus our attention on $\PU(2,1)$-conjugacy classes. Note that,
given $\tau\in\mathbb C\setminus\Delta$,
the three loxodromic $\SU(2,1)$-conjugacy classes of traces 
$\tau$, $\omega\tau$, $\omega^2\tau$ determine the same loxodromic 
$\PU(2,1)$-conjugacy class, as they differ
(up to conjugacy) by an element of $\Omega$. The three nonidentical distinct 
$\SU(2,1)$-conjugacy classes of trace $\tau\in\Delta$, determine three distinct 
$\PU(2,1)$-classes when $\tau\neq 0$, and these classes coincide with the ones 
determined by the traces $\omega\tau$ and $\omega^2\tau$; however, for 
$\tau=0=1+\omega+\omega^2$, they
determine the same $\PU(2,1)$-conjugacy class.

\subsection{The space of {\rm PU(2,1)}-conjugacy classes}
\label{subsec:puclasses}
Our approach here follows 
\cite{FW2009} and \cite{Will2017}.
Let $\mathcal G$ be the space of all $\PU(2,1)$-conjugacy classes, i.e.,
the quotient of $\PU(2,1)$ by the action of $\PU(2,1)$ on itself by conjugacy,  
equipped with the quotient topology.

Let $\mathcal G^{\mathrm{reg}}$ be the space of classes of regular isometries
(see Definition~\ref{defi:regular}), let $\mathcal E\subset\mathcal G$ be the space 
of elliptic conjugacy classes (including the 
identical one), let $\mathcal B$ be the space of boundary classes (classes of parabolic
or special elliptic isometries), and let 
$\mathcal L\subset\mathcal G$ be the space of loxodromic conjugacy classes. 
We denote by $\mathcal E^{\mathrm{reg}}\subset\mathcal G^{\mathrm{reg}}$
the space of regular elliptic conjugacy classes. Note that 
$\mathcal G^{\mathrm{reg}}=\mathcal E^{\mathrm{reg}}\cup\mathcal L$. 

\begin{figure}[!ht]
\centering
\includegraphics[scale=.8]{pics/ellipticclass1.mps}
\hspace{1cm}
\includegraphics[scale=.8]{pics/ellipticclass2.mps}
\end{figure}

An elliptic isometry $F\in\PU(2,1)$ stabilizes two orthogonal complex geodesics in
$\mathbb H_\mathbb C^2$, acting as a
rotation by $\theta_1$ on one of them and as a rotation by $\theta_2$ on the other.
We call the (nonoriented) pair $\{\theta_1,\theta_2\}$ the {\it angle pair\/}
of $F$. Two elliptic isometries with same angle pair are $\PU(2,1)$-conjugated.
Hence, $\mathcal E$ is the space of nonoriented angle pairs, and it can be seen as
the triangular region $\mathsf T:=\{(\theta_1,\theta_2)\in\mathbb R^2\mid 0
\leq \theta_2\leq\theta_1\leq 2\pi\}$
quotiented by the identification $(\theta,0)\simeq(2\pi,\theta)$.
Clearly, every point in $\mathbb R^2$ has a representative in
$\mathcal E$. In what follows, if we write an element of 
$\mathcal E$ as $(\theta_1,\theta_2)$,  with parenthesis, we are assuming 
that $0\leq\theta_2\leq\theta_1\leq 2\pi$.
Note that $\mathcal E^{\mathrm{reg}}$ is homeomorphic to the interior of $\mathsf T$.
Also, points of the form $(\theta,\theta)$, for $0<\theta<2\pi$, correspond
to classes of special elliptic isometries with negative center, while
those of the form $(0,\theta)\simeq(2\pi,\theta)$ correspond to classes
of special elliptic isometries with positive center.

\smallskip

As two loxodromic isometries admitting lifts with
same trace are $\PU(2,1)$-conjugated, $\mathcal L$ is homeomorphic
to $\mathbb C\setminus\Delta$ quotiented by the action of 
the subgroup $\Omega\subset\mathbb C$, i.e., it is homeomorphic the cylinder
$\mathbb S^1\times\mathbb R_{>0}$.

\smallskip

Note that the space $\mathcal G$ is not Hausdorff as, for
example, any neighborhood of a unipotent class always intersect a neighborhood of 
the identical class. So, in order to use standard topological arguments, it is 
useful to consider the maximal Hausdorff quotient $c(\mathcal G)$ of $\mathcal G$ 
and its natural projection $\rho:\mathcal G\to c(\mathcal G)$.

%Note that $\rho^{-1}(\mathcal L)=\mathcal L$ and 
%$\rho^{-1}(\mathcal E^{\text{reg}})=\mathcal E^{\text{reg}}$. 
%The space $\mathcal B:=\rho^{-1}(\mathcal E\setminus\mathcal E^{\text{reg}})$ of 
%{\it boundary\/} classes. 

Since the space $\mathcal E\cup\mathcal B$ is compact in $\mathcal G$, if follows
that $\rho(\mathcal E)=\rho(\mathcal E\cup\mathcal B)$ is compact in $c(\mathcal G)$.
(Note that $\mathcal E$ is not compact in $\mathcal G$, since there are sequences
of regular elliptic isometries converging to parabolic ones.)
Moreover, $\rho(\mathcal E)$ is homeomorphic to the quotient $\mathsf T/\simeq$.
Such homeomorphism is given by the description of $\mathcal E$ as the space of
angle pairs, together with $\rho$.

In order to understand how $\rho(\mathcal B)$ is identified with the sides of 
$\mathsf T/\simeq$, we observe that 
ellipto-parabolic isometries also have their angle pairs: if $F$ is a parabolic isometry
with repeated eigenvalue $e^{\theta i}\in\mathbb S^1$, then $e^{-2\theta i}$ is 
also an eigenvalue of $F$ and it is associated to the positive fixed point of $F$.
Such an isometry stabilizes a complex geodesic,
where it acts as a parabolic isometry in the sense of the geometry
of the Poincar\'e disk, and
rotates points around this line by the angle~$-3\theta$. Therefore,
the angle pair of $F$ is $\{-3\theta,0\}$. Two parabolic isometries
with the same angle pair are $\PU(2,1)$-conjugated.

So, we identify $\rho(\mathcal E)$ with the quotient
$\mathsf T/\simeq$ by seeing points in $\rho(\mathcal E)$ as angles pairs
corresponding either to regular elliptic classes or to
special elliptic and parabolic classes. Furthermore, we identify the set 
$\mathcal E^{\mathrm{reg}}$
with $\rho(\mathcal E^{\mathrm{reg}})$ and frequently consider 
$\mathcal E^{\mathrm{reg}}=\mathsf T^\circ\subset
\rho(\mathcal E)$.
Using these identifications, 
we will consider the space $\rho(\mathcal E)\subset c(\mathcal G)$ as having two sides
and one vertex  determined by the sides and vertices of~$\mathsf T$; the sides and 
the vertex of $\rho(\mathcal E)$ constitute the set $\rho(\mathcal B)$. The points 
$(0,0)$, $(2\pi,0)$, $(2\pi,2\pi)$ of $\mathsf T$ are identified in the vertex of 
$\rho(\mathcal E)$. The {\it diagonal\/}
(resp. the {\it nondiagonal\/}) {\it side\/} of $\rho(\mathcal E)$ is given by 
angle pairs of the form $(\theta,\theta)$ (resp. $(\theta,0)\simeq (2\pi,\theta)$),
with $0<\theta<2\pi$.

Hence, the fiber of $\rho$ over an angle pair in the nondiagonal side of
$\rho(\mathcal E)$ contains, besides the mentioned special
elliptic class with positive center, the class of ellipto-parabolic isometries
with this angle pair. Furthermore, the fiber of $\rho$ over a point in the 
diagonal side of $\rho(\mathcal E)$ corresponds only to the class of a 
special elliptic isometry with negative center. Finally, the fiber of $\rho$ 
over the vertex of  $\rho(\mathcal E)$ has $4$ elements: the identical class, 
the two classes of $2$-step unipotent isometries, and the class of $3$-step 
unipotent isometries. 

\subsection{Decomposing isometries}
\label{subsec:specdecomp}

Here we discuss what it means to decompose, in $\PU(2,1)$ and $\SU(2,1)$,
an isometry as the product of special elliptic isometries and introduce some notation.

\begin{defi}
\label{defi:adecomp}
Given parameters $\alpha_1,\ldots,\alpha_n\in\mathbb S^1\setminus\Omega$, we 
say that an isometry in $\PU(2,1)$ admits an 
{\it $(\alpha_1,\ldots,\alpha_n)$-decomposition\/} if it has a lift 
$F\in\SU(2,1)$ such that $F=R_{\alpha_n}^{p_n}\ldots R_{\alpha_1}^{p_1}$, for points
$p_1,\ldots,p_n\in\PV\setminus\SV$.
We say that an isometry admits an {\it $\alpha^{(n)}$-decomposition\/}, for 
$\alpha\in\mathbb S^1\setminus\Omega$, if it admits an
$(\alpha,\ldots,\alpha)$-decomposition (with $n$-terms).

Given an isometry $F\in\PU(2,1)$, the smallest number $n\in\mathbb N$ such that 
$F$ admits an $\alpha^{(n)}$-decomposition is the {\it $\alpha$-length\/} of $F$.
The maximum of all $\alpha$-lengths over isometries in $\PU(2,1)$ is the
{\it $\alpha$-length\/} of $\PU(2,1)$.
\end{defi}

\begin{prop}
\label{prop:decomp}
Given parameters\/ $\alpha_i\in\mathbb S^1\setminus\Omega$, the following statements
hold:

\begin{enumerate}[{\rm(}i\/{\rm)}]
\item If an isometry in\/ $\PU(2,1)$ admits an\/
$(\alpha_1,\ldots,\alpha_n)$-decomposition, then every isometry
in its\/ $\PU(2,1)$-conjugacy class also does.
\item If an isometry in\/ $\PU(2,1)$ admits an\/ 
$(\alpha_1,\ldots,\alpha_n)$-decomposition, then it also admits a\/
$(\delta_1\alpha_1,\ldots,\delta_n\alpha_n)$-decomposition,
for any\/ $\delta_1,\ldots,\delta_n\in\Omega$.
\item If an isometry in\/ $\PU(2,1)$ admits an\/ 
$(\alpha_1,\ldots,\alpha_n)$-decomposition, then it also admits a\/
$(\beta_1,\ldots,\beta_n)$-decomposition, where\/ $(\beta_1,\ldots,\beta_n)$ is a 
cyclic permutation of\/ $(\alpha_1,\ldots,\alpha_n)$.
\end{enumerate}
\end{prop}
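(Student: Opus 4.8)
The plan is to prove each of the three statements by directly manipulating the defining formula~\eqref{eq:specialelliptic} for $R_\alpha^p$, since all three assertions are essentially formal consequences of how $\SU(2,1)$ and $\PU(2,1)$ relate and of conjugation-equivariance. For statement~(i), I would start from a lift $F=R_{\alpha_n}^{p_n}\cdots R_{\alpha_1}^{p_1}\in\SU(2,1)$ of the given isometry and let $G\in\PU(2,1)$ be arbitrary, with a lift $\tilde G\in\SU(2,1)$. The key observation is the conjugation identity $\tilde G R_\alpha^p \tilde G^{-1}=R_\alpha^{\tilde G p}$: applying $\tilde G(\cdot)\tilde G^{-1}$ to~\eqref{eq:specialelliptic} and using that $\tilde G$ preserves the Hermitian form (so $\langle \tilde Gx,\tilde Gp\rangle=\langle x,p\rangle$ and $\langle \tilde Gp,\tilde Gp\rangle=\langle p,p\rangle$) shows $\tilde G R_\alpha^p\tilde G^{-1}$ is again of the form~\eqref{eq:specialelliptic} with center $\tilde Gp$ and the same parameter $\alpha$. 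Then $\tilde G F\tilde G^{-1}=R_{\alpha_n}^{\tilde Gp_n}\cdots R_{\alpha_1}^{\tilde Gp_1}$ is the desired decomposition of a lift of $GFG^{-1}$, and since $G$ was arbitrary this covers the whole conjugacy class.

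For statement~(ii), the point is that replacing $\alpha$ by $\delta\alpha$ with $\delta\in\Omega$ changes $R_\alpha^p$ only by an element of $\Omega$ in $\SU(2,1)$, hence not at all in $\PU(2,1)$. I would compute from~\eqref{eq:specialelliptic} that $R_{\delta\alpha}^p(x)=\bigl((\delta\alpha)^{-2}-\delta\alpha\bigr)\frac{\langle x,p\rangle}{\langle p,p\rangle}p+\delta\alpha\,x$; using $\delta^3=1$ so that $\delta^{-2}=\delta$, this equals $\delta\bigl((\alpha^{-2}-\alpha)\frac{\langle x,p\rangle}{\langle p,p\rangle}p+\alpha x\bigr)=\delta\,R_\alpha^p(x)$. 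Thus $R_{\delta\alpha}^p=\delta R_\alpha^p$ as elements of $\SU(2,1)$, where $\delta$ is the scalar matrix $\delta\cdot\mathrm{Id}$. Consequently $R_{\delta_n\alpha_n}^{p_n}\cdots R_{\delta_1\alpha_1}^{p_1}=(\delta_1\cdots\delta_n)\,R_{\alpha_n}^{p_n}\cdots R_{\alpha_1}^{p_1}=(\delta_1\cdots\delta_n)F$, and since $\delta_1\cdots\delta_n\in\Omega$, this is another lift of the same isometry in $\PU(2,1)$; so it admits a $(\delta_1\alpha_1,\ldots,\delta_n\alpha_n)$-decomposition.

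For statement~(iii), it suffices by induction to handle the single cyclic shift sending $(\alpha_1,\ldots,\alpha_n)$ to $(\alpha_2,\ldots,\alpha_n,\alpha_1)$. If $F=R_{\alpha_n}^{p_n}\cdots R_{\alpha_1}^{p_1}$, then conjugating $F$ by the invertible isometry $A:=R_{\alpha_1}^{p_1}$ gives
\[
A F A^{-1}=R_{\alpha_1}^{p_1}R_{\alpha_n}^{p_n}\cdots R_{\alpha_2}^{p_2}R_{\alpha_1}^{p_1}\bigl(R_{\alpha_1}^{p_1}\bigr)^{-1}=R_{\alpha_1}^{p_1}R_{\alpha_n}^{p_n}\cdots R_{\alpha_2}^{p_2},
\]
which is a decomposition with parameter sequence $(\alpha_2,\ldots,\alpha_n,\alpha_1)$. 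Since $AFA^{-1}$ lies in the $\PU(2,1)$-conjugacy class of $F$, statement~(i) then transfers this decomposition back to $F$ itself. (Alternatively, one checks directly that $A F A^{-1}$ being a cyclic-shift decomposition of $A$'s conjugate is enough, invoking~(i).)

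I do not expect any genuine obstacle here: all three parts are routine once the conjugation identity $\tilde G R_\alpha^p\tilde G^{-1}=R_\alpha^{\tilde Gp}$ and the scaling identity $R_{\delta\alpha}^p=\delta R_\alpha^p$ are written down. The only mild subtlety is bookkeeping between $\SU(2,1)$ and $\PU(2,1)$ — making sure that ``admits an $(\alpha_1,\ldots,\alpha_n)$-decomposition'' in the sense of Definition~\ref{defi:adecomp} is about the existence of \emph{some} lift, so that multiplying the product by a scalar in $\Omega$ (as happens in~(ii)) does not affect whether the isometry admits the decomposition. I would state the two identities as a preliminary observation and then dispatch (i), (ii), (iii) in that order, with (iii) reduced to (i) via conjugation by the first factor.
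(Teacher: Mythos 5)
Your proposal is correct and follows essentially the same route as the paper's proof: the conjugation identity $IR_\alpha^pI^{-1}=R_\alpha^{Ip}$ for part (i), the scaling identity $R_{\delta\alpha}^p=\delta R_\alpha^p$ (with $\delta^{-2}=\delta$) for part (ii), and conjugation by the first factor $R_{\alpha_1}^{p_1}$ combined with (i) for part (iii). The only difference is that you spell out the computations behind the two identities, which the paper takes as known.
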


\begin{proof}
Let $F\in\SU(2,1)$ be an isometry that can be written as
$F=R_{\alpha_n}^{p_n}\ldots R_{\alpha_1}^{p_1}$.

For any $I\in\SU(2,1)$, we have $IFI^{-1}=
R_{\alpha_n}^{Ip_n}\ldots R_{\alpha_1}^{Ip_1}$ since
$IR_\alpha^p I^{-1}=R_\alpha^{Ip}$. This proves~({\it i\/}).

Since $R_{\delta\beta}^q=\delta R_\beta^q$ for any
special elliptic isometry $R_\beta^q$ and any $\delta\in\Omega$,
given $\delta_1,\ldots,\delta_n\in\Omega$ we have
$F=\delta R_{\delta_n\alpha_n}^{p_n}\ldots R_{\delta_1\alpha_1}^{p_1}$,
where $\delta:=\Pi\overline\delta_i\in\Omega$, 
which proves ({\it ii}\/).
Finally, note that $R_{\alpha_1}^{p_1}F R_{\overline\alpha_1}^{p_1}=
R_{\alpha_1}^{p_1}R_{\alpha_n}^{p_n}\ldots R_{\alpha_2}^{p_2}$.
Using ({\it i}\/), we obtain ({\it iii}\/). 
\end{proof}

Using item~({\it i}\/) of Proposition~\ref{prop:decomp}, we can say that a 
conjugacy class admits an $(\alpha_1,\ldots,\alpha_n)$-decomposition,
for given parameters $\alpha_1,\ldots,\alpha_n\in\mathbb S^1\setminus\Omega$;
this means that one (and therefore every) isometry in such class admits an
$(\alpha_1,\ldots,\alpha_n)$-decomposition. In the same way, we can consider the $\alpha$-length
of conjugacy classes in $\mathcal G$.

\begin{notation}
\label{notation:egclasses}
We denote by $\mathsf G_{\alpha_1,\ldots,\alpha_n}\subset\mathcal G$ 
(see Subsection~\ref{subsec:puclasses}) the set of semisimple classes admitting an 
$(\alpha_1,\ldots,\alpha_n)$-decomposition. 
We also denote by $\mathsf E_{\alpha_1,\ldots,\alpha_n}\subset c(\mathcal G)$ the projection 
under~$\rho$ of the set of classes in 
$\mathcal E\cup\mathcal B$ admitting an 
$(\alpha_1,\ldots,\alpha_n)$-decomposition (note that
$\mathsf E_{\alpha_1,\ldots,\alpha_n}\subset\rho(\mathcal E)$).
Following Definition~\ref{defi:adecomp}, we use 
$\mathsf G_{\alpha^{(n)}}$ and $\mathsf E_{\alpha^{(n)}}$ when 
$\alpha_1=\cdots=\alpha_n:=\alpha$. Moreover,
we denote $\mathsf S_{\alpha}$ the set composed by the two classes of special elliptic
isometries of parameter~$\alpha$.
\end{notation}

The next result is a direct consequence of item~({\it ii}\/) of 
Proposition~\ref{prop:decomp} and implies that we can focus our attention
in finding the $\alpha$-length of $\PU(2,1)$ for parameters~$\alpha=e^{ai}$ with~$0<a<2\pi/3$.

\begin{cor}
\label{cor:deltaalpha}
The\/ $\alpha$-length of\/ an isometry in\/ $\PU(2,1)$ is equal to its\/ $\delta\alpha$-length
of\/ $\PU(2,1)$, for any\/ $\delta\in\Omega$.
\end{cor}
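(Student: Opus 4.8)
The plan is to show that the two quantities agree by exhibiting, for each $\delta\in\Omega$, a length-preserving correspondence between $\alpha^{(n)}$-decompositions and $\delta\alpha^{(n)}$-decompositions of a fixed isometry. First I would recall the identity $R_{\delta\alpha}^q=\delta R_\alpha^q$ for any center $q\in\PV\setminus\SV$ and any $\delta\in\Omega$, which was already used in the proof of Proposition~\ref{prop:decomp}. This is the only nontrivial input; everything else is bookkeeping with cube roots of unity.

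Next I would fix an isometry in $\PU(2,1)$ and suppose it admits an $\alpha^{(n)}$-decomposition. By Definition~\ref{defi:adecomp}, it has a lift $F\in\SU(2,1)$ with $F=R_\alpha^{p_n}\cdots R_\alpha^{p_1}$ for some $p_1,\dots,p_n\in\PV\setminus\SV$. Applying the identity above to each factor gives $F=\delta^{-n}R_{\delta\alpha}^{p_n}\cdots R_{\delta\alpha}^{p_1}$, so $\delta^{n}F=R_{\delta\alpha}^{p_n}\cdots R_{\delta\alpha}^{p_1}$. Since $\delta^{n}\in\Omega$, the element $\delta^{n}F$ is another lift in $\SU(2,1)$ of the same isometry in $\PU(2,1)$; hence that isometry admits a $(\delta\alpha)^{(n)}$-decomposition. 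This shows the $\delta\alpha$-length of the isometry is at most its $\alpha$-length.

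For the reverse inequality, I would simply apply the same argument with the roles of $\alpha$ and $\delta\alpha$ interchanged, using that $\overline\delta\in\Omega$ and $(\overline\delta)(\delta\alpha)=\alpha$. This yields that the $\alpha$-length is at most the $\delta\alpha$-length, so the two are equal; taking $n$ to be the $\alpha$-length gives the smallest $n$ realizing either, and the conclusion follows. I do not foresee a genuine obstacle here: the only thing to be careful about is keeping track of which power of $\delta$ appears in front of $F$ and checking it lies in $\Omega$ (which it does, since $\Omega$ is closed under multiplication), so that we stay within the same $\PU(2,1)$-class after rescaling the lift.
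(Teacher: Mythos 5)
Your proposal is correct and follows essentially the same route as the paper: the identity $R_{\delta\alpha}^q=\delta R_\alpha^q$ and the resulting rescaling of the lift by $\delta^n\in\Omega$ is exactly the content of item~(\emph{ii}) of Proposition~\ref{prop:decomp}, from which the paper deduces the corollary directly. You have merely inlined that proposition's proof and added the (correct) symmetric argument for the reverse inequality.
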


Therefore, for any cube root of unity $\delta\in\Omega$, the $\alpha$-length
and the $\delta\alpha$-length of $\PU(2,1)$ coincide.

\section{The unfolded trace and lines tangent to the deltoid}
\label{sec:altracetangent}

In this section we introduce the tools that will compose, together
with the product map (see Subsection~\ref{subsec:productmap}), the technique used 
to obtain the $\alpha$-length of $\PU(2,1)$.
The main idea is the interaction between lines tangent
to the deltoid $\partial\Delta$ (and their relation to the eigenvalues of 
isometries in $\SU(2,1)$)
and the {\it unfolded trace} --- a trace-like function that can distinguish
regular elliptic $\PU(2,1)$-conjugacy classes. 

\subsection{Unfolding the trace of elliptic isometries}
\label{subsec:alttrace}
As seen in Section~\ref{sec:conjclasses}, there are three distinct 
$\PU(2,1)$-conjugacy classes corresponding to the three traces
$\tau,\omega\tau,\omega^2\tau$, where~$\omega:=e^{2\pi i/3}$ 
and~$\tau\in\Delta\setminus\{0\}$. 
So, we have enough space
to `unfold' the trace of $\SU(2,1)$ into a function that
(while coinciding with the trace in some sense) distinguishes 
regular elliptic conjugacy classes. Such a function should, for 
any~$\tau\in\Delta^\circ$, continuously send each of the three classes determined by
$\tau$ to distinct values in 
$\{\tau,\omega\tau,\omega^2\tau\}$. Here, we introduce a function 
that does just that.

\smallskip

Given $(\theta_1,\theta_2)\in \mathsf T$, put (see \cite[Subsection~3.3.3]{Will2017})
$$E_{\theta_1,\theta_2}:=\left[
\begin{matrix}
e^{\frac{2\theta_1-\theta_2}{3}i} & 0 & 0 \\
0 & e^{\frac{2\theta_2-\theta_1}{3}i} & 0 \\
0 & 0 & e^{-\frac{\theta_1+\theta_2}{3}i}
\end{matrix}
\right],$$
and define $\alttrace:\mathsf T\to\mathbb C$ by
$\alttrace(\theta_1,\theta_2)=\trace E_{\theta_1,\theta_2}$.
This function does not descend to a well defined function on $\mathcal E$
since $\trace E_{(\theta,0)}\neq\trace E_{(2\pi,\theta)}$ for every
$0\leq\theta\leq 2\pi$, but it clearly well defines a function
$\alttrace:\mathcal E^{\mathrm{reg}}\to\mathbb C$. Abusing notation, we also evaluate 
$\alttrace$ directly on regular elliptic isometries
by considering $\alttrace F=\alttrace(\theta_1,\theta_2)$,
where $(\theta_1,\theta_2)\in\mathcal E^{\mathrm{reg}}$ is the angle pair of $F$.

\begin{figure}[!ht]
\centering
\includegraphics[scale=1]{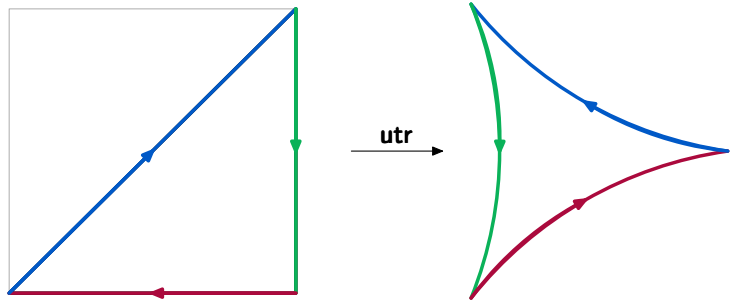}
\caption{How $\alttrace$ maps the boundary of $\mathsf T$ onto $\partial\Delta$. (Colors are
visible in the online version)}
\label{fig:alttraceborder}
\end{figure}

\begin{prop}
\label{prop:alttraceinj}
The function\/ $\alttrace$ maps\/ $\mathsf T$ bijectively onto\/ $\Delta$.
\end{prop}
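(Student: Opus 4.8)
The plan is to show that $\alttrace$ is a continuous injection from the compact set $\mathsf T$ to $\mathbb C$ with image exactly $\Delta$; since $\mathsf T$ is compact and $\mathbb C$ is Hausdorff, injectivity plus the correct image will give a homeomorphism onto $\Delta$, but for the statement we only need the set-theoretic bijection. First I would record that $\alttrace(\theta_1,\theta_2)=e^{\frac{2\theta_1-\theta_2}{3}i}+e^{\frac{2\theta_2-\theta_1}{3}i}+e^{-\frac{\theta_1+\theta_2}{3}i}$ is precisely the trace of a diagonal unitary matrix of determinant $1$ whose eigenvalue-angle triple $(\varphi_1,\varphi_2,\varphi_3)=\bigl(\frac{2\theta_1-\theta_2}{3},\frac{2\theta_2-\theta_1}{3},-\frac{\theta_1+\theta_2}{3}\bigr)$ sums to $0$. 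Hence the image of $\alttrace$ is contained in $\{\trace U : U\in\SU(2,1)\text{ elliptic or special}\}$, which by the trace classification recalled in Section~\ref{sec:conjclasses} is exactly $\Delta$ (every point of $\Delta^\circ$ is the trace of a regular elliptic isometry, every point of $\partial\Delta$ the trace of a special elliptic or parabolic one, and conversely). So $\alttrace(\mathsf T)\subseteq\Delta$. For surjectivity, given $\tau\in\Delta$ pick an elliptic $F\in\SU(2,1)$ with $\trace F=\tau$; its eigenvalues are $e^{\varphi_j i}$ with $\sum\varphi_j\equiv 0\pmod{2\pi}$, and solving the linear system $\varphi_1=\frac{2\theta_1-\theta_2}{3}$, $\varphi_2=\frac{2\theta_2-\theta_1}{3}$ (equivalently $\theta_1=\varphi_1-\varphi_3$, $\theta_2=\varphi_2-\varphi_3$ up to adding multiples of $2\pi$) one can arrange $0\le\theta_2\le\theta_1\le 2\pi$ by choosing the angles $\varphi_j$ in the right $2\pi$-windows and relabeling; this lands in $\mathsf T$ and maps to $\tau$.

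The heart of the argument is injectivity, and this is where I expect the real work. Suppose $\alttrace(\theta_1,\theta_2)=\alttrace(\theta_1',\theta_2')$. Two diagonal unitary matrices of determinant $1$ have the same trace iff they have the same multiset of eigenvalues (this is the elementary fact, used already in the excerpt via \cite[Proof of Lemma~6.2.5]{Goldman1999}, that a degree-$3$ unitary with fixed determinant is determined up to eigenvalue-permutation by its trace — one recovers the elementary symmetric functions $e_1=\tau$, $e_3=1$, $e_2=\overline{e_1}$ from $\tau$). Therefore $\{e^{\varphi_1 i},e^{\varphi_2 i},e^{\varphi_3 i}\}=\{e^{\varphi_1' i},e^{\varphi_2' i},e^{\varphi_3' i}\}$ as multisets on the unit circle. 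The point is then that the normalization $0\le\theta_2\le\theta_1\le 2\pi$ pins down which permutation occurs. Writing $\theta_1=\varphi_1-\varphi_3+2\pi$ or $\varphi_1-\varphi_3$ and $\theta_2=\varphi_2-\varphi_3$ appropriately, I would check that among the six orderings of the eigenvalue angles exactly one yields a pair in $\mathsf T^\circ$, with the quotient identification $(\theta,0)\simeq(2\pi,\theta)$ accounting precisely for the remaining ambiguity on the boundary (where two eigenvalues coincide). Concretely: on the interior, the three eigenvalue angles $\varphi_1>\varphi_2>\varphi_3$ in a fixed fundamental arrangement are forced, and $(\theta_1,\theta_2)=(\varphi_1-\varphi_3,\varphi_2-\varphi_3)$ is then uniquely determined; on $\partial\mathsf T$ one eigenvalue is repeated and the identification $(\theta,0)\simeq(2\pi,\theta)$ is exactly the statement that the two boundary descriptions of a special-elliptic angle pair coincide. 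This case analysis — matching eigenvalue multisets to points of $\mathsf T$ under the ordering constraint, and verifying that the only collisions are the declared identification — is the main obstacle, and it is essentially the content of Figure~\ref{fig:alttraceborder} made rigorous on the boundary together with the strict-ordering argument in the interior.

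Finally, I would remark that once injectivity is established, continuity of $\alttrace$ (clear from the formula) and compactness of $\mathsf T$ give that $\alttrace$ is a homeomorphism onto its image $\Delta$, which is the form in which the proposition will be used in the sequel (e.g.\ to transport the diagonal and nondiagonal sides of $\mathsf T$ to $\partial\Delta$, and segments of slopes $-1,\tfrac12,2$ to subsegments of tangent lines to the deltoid, as announced in the introduction). I would also note the useful by-product that $\alttrace$ restricted to $\mathsf T^\circ$ is exactly the promised "unfolded trace": the three $\PU(2,1)$-classes with common $\SU(2,1)$-trace data correspond to the three points of $\mathsf T^\circ$ whose $\alttrace$-values are $\tau,\omega\tau,\omega^2\tau$, because cyclically shifting the eigenvalue-angle triple by $\tfrac{2\pi}{3}$ multiplies $\alttrace$ by $\omega$ while moving $(\theta_1,\theta_2)$ to the other two valid orderings.
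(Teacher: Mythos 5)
Your strategy is the same as the paper's: injectivity is reduced to the fact that the trace of a determinant-one unitary determines its eigenvalue multiset, and then to an analysis of how the normalization $0\le\theta_2\le\theta_1\le 2\pi$, together with the choice of which eigenvalue plays the role of $e^{-\frac{\theta_1+\theta_2}{3}i}$, singles out a preimage; surjectivity is obtained from the count of conjugacy classes per trace value and the explicit boundary behaviour. However, the load-bearing sentence of your injectivity argument --- that among the six orderings of the eigenvalue angles \emph{exactly one} yields a pair in $\mathsf T^\circ$ --- is false as stated, and your own closing paragraph contradicts it. Given a multiset $\{\lambda_1,\lambda_2,\lambda_3\}$ of distinct unit eigenvalues with product $1$, there are three admissible ordering-plus-window choices, one for each choice of ``base'' eigenvalue (the slot of $e^{-\frac{\theta_1+\theta_2}{3}i}$), the other two slots being forced by the counterclockwise order; \emph{all three} produce points of $\mathsf T^\circ$, namely the pairs of counterclockwise angular gaps measured from each base point. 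What distinguishes them is not membership in $\mathsf T^\circ$ but the value of $\alttrace$: reconstructing the angle triple from the gaps so that it sums to $0$ recovers the multiset only up to a cube root of unity, and cyclically shifting the base eigenvalue multiplies the realized trace by $\omega$. Hence the three candidates have $\alttrace$-values $\tau,\omega\tau,\omega^2\tau$, exactly one of which equals $\tau$ when $\tau\neq 0$; when $\tau=0$ (eigenvalues $1,\omega,\omega^2$) the three candidates collapse to the single point $\big(\tfrac{4\pi}{3},\tfrac{2\pi}{3}\big)$, so injectivity survives there too, but this case must be treated separately.

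This $\omega$-twist computation is precisely what the paper carries out by switching which eigenvalue is of negative type and recomputing the resulting angle pair as $(2\pi-\theta_1+\theta_2,\,2\pi-\theta_1)$, concluding $\alttrace F_2=\omega\alttrace F_1$ and hence $\alttrace F_1\neq\alttrace F_2$ when $\trace F_1\neq 0$. Your final remark about cyclic shifts multiplying $\alttrace$ by $\omega$ shows you have the correct mechanism in hand; you need to make that remark, rather than the ``exactly one ordering lands in $\mathsf T^\circ$'' claim, the engine of the injectivity proof, and to add the $\tau=0$ check explicitly.
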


\begin{proof}
Take $(\theta_1,\theta_2)\in\mathsf T$ and let $F_1\in\SU(2,1)$ be an elliptic
isometry with angle pair $(\theta_1,\theta_2)$ and eigenvalues 
$e^{-\frac{\theta_1+\theta_2}{3}i},e^{\frac{2\theta_1-\theta_2}{3}i},
e^{\frac{2\theta_2-\theta_1}{3}i}$, with $e^{-\frac{\theta_1+\theta_2}{3}i}$ being of
negative type. 

Now, consider $F_2\in\SU(2,1)$ such that $\trace F_1=\trace F_2$ (in particular
$F_1$ and $F_2$, have the same eigenvalues, see~\cite[Proof of Lemma~6.2.5]{Goldman1999}) and such that 
$e^{\frac{2\theta_1-\theta_2}{3}i}$ is its negative type eigenvalue. It follows
that the angle pair of~$F_2$ is $\{-\theta_1,-\theta_1+\theta_2\}$. Using the fact 
that~$0\leq\theta_2\leq\theta_1\leq2\pi$, we obtain that such angle pair projects to
the point $(2\pi-\theta_1+\theta_2,2\pi-\theta_1)\in\mathsf T$. This implies
that $\alttrace F_2=\alttrace(2\pi-\theta_1,2\pi-\theta_1+\theta_2)=\omega\alttrace F_1$.
Therefore, if $\trace F_1\neq 0$, $\alttrace F_1\neq\alttrace F_2$. The argument follows 
analogously if we assume that the negative type eigenvalue of $F_2$ is 
$e^{\frac{2\theta_2-\theta_1}3i}$.

It remains to prove that $\alttrace$ is surjective. Given $\tau\in\Delta^{\circ}$,
$\tau\neq 0$, there are three distinct $\PU(2,1)$-conjugacy classes that admit 
a lift with trace $\tau$ (see Section~\ref{sec:conjclasses}).
Since $\alttrace$ is injective, it sends each of these classes to a distinct
value, and the values it can assume lie in the set $\{\tau,\omega\tau,\omega^2\tau\}$.
For~$\tau=0$, there is only one conjugacy class with such trace. Finally,
$\alttrace$ sends the boundary of $\mathsf T$ onto $\partial\Delta$. In fact, 
$$\alttrace(\theta,\theta)=2e^{\frac{\theta}3i}+e^{-\frac{2\theta}3i},
\quad\alttrace(0,\theta)=2e^{-\frac{\theta}3i}+e^{\frac{2\theta}{3}i},
\quad\text{and}\quad \alttrace(\theta,0)=2e^{-\frac{\theta}3i}+e^{\frac{2\theta}{3}i},$$
for $\theta\in[0,2\pi]$. It follows that each side of the boundary of $\mathsf T$
is mapped onto a distinct side of $\partial\Delta$ (see Figure~\ref{fig:alttraceborder}).
\end{proof}

\begin{cor}
Let\/ $F_1,F_2\in\PU(2,1)$ be regular elliptic isometries. If\/ $\alttrace F_1=
\alttrace F_2$, then\/ $F_1$ and\/ $F_2$ are\/ $\PU(2,1)$-conjugated.
\end{cor}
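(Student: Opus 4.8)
The plan is to read off the corollary directly from Proposition~\ref{prop:alttraceinj} together with the discussion of $\PU(2,1)$-conjugacy classes in Subsection~\ref{subsec:puclasses}. Recall that $\mathcal E^{\mathrm{reg}}$ is identified with $\mathsf T^\circ$, the interior of $\mathsf T$, via the angle pair, and that two elliptic isometries are $\PU(2,1)$-conjugated precisely when they have the same angle pair. So it suffices to show that $\alttrace$, restricted to $\mathsf T^\circ$, is injective. But Proposition~\ref{prop:alttraceinj} already asserts that $\alttrace$ is a bijection from all of $\mathsf T$ onto $\Delta$; in particular its restriction to $\mathsf T^\circ$ is injective.

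Concretely, I would argue as follows. Let $F_1,F_2\in\PU(2,1)$ be regular elliptic, with angle pairs $(\theta_1,\theta_2),(\theta_1',\theta_2')\in\mathsf T^\circ$ respectively (here I use that regular elliptic classes correspond to interior points of $\mathsf T$, as noted in Subsection~\ref{subsec:puclasses}). By definition $\alttrace F_i=\alttrace(\theta_1^{(i)},\theta_2^{(i)})$. The hypothesis $\alttrace F_1=\alttrace F_2$ then gives $\alttrace(\theta_1,\theta_2)=\alttrace(\theta_1',\theta_2')$, and injectivity of $\alttrace$ on $\mathsf T$ forces $(\theta_1,\theta_2)=(\theta_1',\theta_2')$. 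Since elliptic isometries with the same angle pair are $\PU(2,1)$-conjugated, $F_1$ and $F_2$ are $\PU(2,1)$-conjugated.

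There is really no obstacle here — this is a one-line corollary of the preceding proposition. The only point requiring a word of care is making sure the angle pairs land in $\mathsf T^\circ$ rather than on the boundary, which is exactly the content of the identification $\mathcal E^{\mathrm{reg}}\cong\mathsf T^\circ$ recorded in Subsection~\ref{subsec:puclasses}; without regularity one could not conclude, since on $\partial\mathsf T$ the value of $\alttrace$ does not determine the full $\PU(2,1)$-class (e.g.\ special elliptic versus ellipto-parabolic classes share the same $\alttrace$-value on the nondiagonal side). I would state the proof in two sentences and cite Proposition~\ref{prop:alttraceinj} for injectivity and Subsection~\ref{subsec:puclasses} for the angle-pair characterization of conjugacy.
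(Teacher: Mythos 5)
Your proof is correct and is exactly the argument the paper intends (the corollary is stated without proof as an immediate consequence of Proposition~\ref{prop:alttraceinj}): injectivity of $\alttrace$ on $\mathsf T$ forces equal angle pairs, and equal angle pairs imply $\PU(2,1)$-conjugacy. Your remark about needing regularity to land in $\mathsf T^\circ$ is the right point of care and matches the paper's setup.
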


\subsection{Lines tangent to the deltoid}
\label{subsec:tangentlines}

Given $\alpha_1,\alpha_2\in\mathbb S^1$, consider the function
$\tau_{\alpha_1,\alpha_2}:\mathbb R\to\mathbb C$
defined by
\begin{equation}
\label{eq:traceparam}
\tau_{\alpha_1,\alpha_2}(t):=
\alpha_1\alpha_2+\alpha_1^{-2}\alpha_2+\alpha_1\alpha_2^{-2}+
(\alpha_1^{-2}-\alpha_1)(\alpha_2^{-2}-\alpha_2)t.
\end{equation}
From~\cite[Section~6]{spell} we have the following lemma.

\begin{lemma}
\label{lemma:tracetau}
If\/ $p_1,p_2\in\PV\setminus\SV$ are such that\/
$\tance(p_1,p_2)=t$ {\rm(}see {\rm Section~\ref{sec:hypgeo})}, then
$\trace R_{\alpha_2}^{p_2}R_{\alpha_1}^{p_1}=\tau_{\alpha_1,\alpha_2}(t)$.
\end{lemma}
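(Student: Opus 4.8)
The plan is to compute $\trace R_{\alpha_2}^{p_2}R_{\alpha_1}^{p_1}$ directly from the explicit formula~\eqref{eq:specialelliptic} for a special elliptic isometry, working in a convenient basis adapted to the two centers $p_1,p_2$. First I would reduce to the generic case where $L(p_1,p_2)$ is a genuine complex line (so $p_1,p_2$ are linearly independent and the restriction of the form to $\mathbb Cp_1+\mathbb Cp_2$ is nondegenerate); this holds precisely when $\tance(p_1,p_2)\neq 0,1$, and the remaining values follow by continuity of both sides in $t$. In this generic situation, choose a basis $\{p_1, q, r\}$ of $V$ where $q\in p_1^{\perp}$ spans the orthocomplement of $p_1$ inside $\mathbb Cp_1+\mathbb Cp_2$, and $r$ spans the orthocomplement of the whole line $L(p_1,p_2)$. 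Then $r$ is an eigenvector of both $R_{\alpha_1}^{p_1}$ and $R_{\alpha_2}^{p_2}$ with eigenvalue $\alpha_1$ and $\alpha_2$ respectively (since $r\perp p_1$ and $r\perp p_2$), contributing a factor $\alpha_1\alpha_2$ to the trace. So the computation collapses to a $2\times 2$ trace on the plane $\mathbb Cp_1 + \mathbb Cq = \mathbb Cp_1 + \mathbb Cp_2$.

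Next I would write both isometries as $2\times 2$ matrices on this plane in the basis $\{p_1,q\}$. From~\eqref{eq:specialelliptic}, $R_{\alpha_1}^{p_1}$ is diagonal in this basis: it scales $p_1$ by $\alpha_1^{-2}$ and scales $q$ (which is orthogonal to $p_1$) by $\alpha_1$. The matrix of $R_{\alpha_2}^{p_2}$ is not diagonal in $\{p_1,q\}$: expanding $p_2$ in terms of $p_1$ and $q$, formula~\eqref{eq:specialelliptic} gives $R_{\alpha_2}^{p_2}x = \alpha_2 x + (\alpha_2^{-2}-\alpha_2)\frac{\langle x,p_2\rangle}{\langle p_2,p_2\rangle}p_2$, and one reads off the entries in terms of the Hermitian products $\langle p_1,p_2\rangle$, $\langle q,p_2\rangle$, $\langle p_1,p_1\rangle$, $\langle p_2,p_2\rangle$ and $\langle q,q\rangle$. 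Multiplying the two $2\times2$ matrices and taking the trace, the answer should organize itself into the terms $\alpha_1^{-2}\alpha_2$ (from the $p_1$--diagonal part of $R_{\alpha_2}^{p_2}$ times $\alpha_1^{-2}$), $\alpha_1\alpha_2$ (from the $q$--diagonal part times $\alpha_1$), and a cross term proportional to $(\alpha_1^{-2}-\alpha_1)(\alpha_2^{-2}-\alpha_2)$ times a ratio of Hermitian products. The remaining bookkeeping is to recognize that this ratio, after accounting for the basis change and the contribution $\alpha_1\alpha_2^{-2}$ hidden in regrouping, equals exactly $\tance(p_1,p_2) = t$ (or $t$ plus a constant absorbed into the $\alpha_1\alpha_2^{-2}$ term), matching~\eqref{eq:traceparam}.

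The main obstacle I anticipate is purely computational: correctly tracking the Hermitian products through the change of basis so that the cross-term coefficient simplifies cleanly to $\tance(p_1,p_2)$, and confirming that the three "constant" monomials in $\alpha_1,\alpha_2$ come out as precisely $\alpha_1\alpha_2 + \alpha_1^{-2}\alpha_2 + \alpha_1\alpha_2^{-2}$ rather than some other symmetric combination. A useful sanity check is the symmetry $R_{\alpha_1}^{p_1}R_{\alpha_2}^{p_2}$ versus $R_{\alpha_2}^{p_2}R_{\alpha_1}^{p_1}$, which are conjugate and hence cotrace-equal; the formula~\eqref{eq:traceparam} is manifestly symmetric under swapping $(\alpha_1\leftrightarrow\alpha_2)$ together with $t\mapsto t$, which it should be since $\tance$ is symmetric. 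Another check is the special value $t=1$ (Euclidean line), where $p_1,p_2$ lie on a degenerate line and the product can be analyzed separately; and $t=0$ (orthogonal centers), where $R_{\alpha_2}^{p_2}$ and $R_{\alpha_1}^{p_1}$ commute and the trace is simply the product of eigenvalues $\alpha_1^{-2}\alpha_2^{-2}\cdot(\text{stuff})$, which must agree with $\tau_{\alpha_1,\alpha_2}(0) = \alpha_1\alpha_2 + \alpha_1^{-2}\alpha_2 + \alpha_1\alpha_2^{-2}$. Since the statement is quoted from~\cite[Section~6]{spell}, I would keep this verification brief and mainly indicate the basis choice and the collapse to the $2\times 2$ computation.
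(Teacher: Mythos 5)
The paper does not actually prove this lemma --- it is quoted from \cite[Section~6]{spell} --- so there is no internal proof to compare against; I will judge your plan on its own. The plan is sound and does yield the stated formula: in your basis $\{p_1,q,r\}$ the vector $r$ contributes $\alpha_1\alpha_2$, and on the plane the diagonal entries of $R_{\alpha_2}^{p_2}$ in the basis $\{p_1,q\}$ come out to $\alpha_2+(\alpha_2^{-2}-\alpha_2)t$ and $\alpha_2+(\alpha_2^{-2}-\alpha_2)(1-t)$ (using $\tance(q,p_2)=1-\tance(p_1,p_2)$); pairing these with $\alpha_1^{-2}$ and $\alpha_1$ and adding $\alpha_1\alpha_2$ gives exactly $\tau_{\alpha_1,\alpha_2}(t)$. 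Two small corrections. First, the configurations your basis choice actually excludes are only $p_1=p_2$ and the Euclidean line, both of which live inside $t=1$; the case $t=0$ (orthogonal centers) is perfectly nondegenerate --- indeed it is the easiest case, since there one may take $q=p_2$ and both maps are simultaneously diagonal. Your continuity reduction still covers whatever is excluded, so this slip is harmless. Second, in your $t=0$ sanity check the trace is the \emph{sum} of the eigenvalues $\alpha_1^{-2}\alpha_2$, $\alpha_1\alpha_2^{-2}$, $\alpha_1\alpha_2$, not their product.

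It is worth recording that the entire computation can be done with no basis choice and no genericity/continuity step. Write $R_{\alpha_i}^{p_i}=\alpha_i\,\mathrm{id}+(\alpha_i^{-2}-\alpha_i)P_i$, where $P_ix:=\frac{\langle x,p_i\rangle}{\langle p_i,p_i\rangle}p_i$ is the orthogonal projection onto $\mathbb Cp_i$. Expanding the product and using $\trace\mathrm{id}=3$, $\trace P_i=1$, and $\trace(P_2P_1)=\frac{\langle p_2,p_1\rangle\langle p_1,p_2\rangle}{\langle p_1,p_1\rangle\langle p_2,p_2\rangle}=\tance(p_1,p_2)$ gives
\begin{equation*}
\trace R_{\alpha_2}^{p_2}R_{\alpha_1}^{p_1}
=3\alpha_1\alpha_2+\alpha_2(\alpha_1^{-2}-\alpha_1)+\alpha_1(\alpha_2^{-2}-\alpha_2)
+(\alpha_1^{-2}-\alpha_1)(\alpha_2^{-2}-\alpha_2)\,t
=\tau_{\alpha_1,\alpha_2}(t),
\end{equation*}
valid uniformly for every pair of nonisotropic points, including $p_1=p_2$ and Euclidean $\mathrm L(p_1,p_2)$. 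This buys you exactly the cases your reduction had to treat separately, at essentially no cost.
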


Moreover, by~\cite[Lemma~6.17]{spell}, $\tau_{\alpha_1,\alpha_2}(\mathbb R)$ is
a line tangent to $\partial\Delta$ at $\tau_{\alpha_1,\alpha_2}(1)$, and it only
depends on the unit complex number $\alpha_1\alpha_2$. We denote this line by 
$\ell_\alpha$ where $\alpha=\alpha_1\alpha_2$.

\begin{prop}
\label{prop:traceeigen}
Let\/ $F\in\SU(2,1)$ and let\/ $\alpha\in\mathbb S^1$. Then\/ $\trace F\in\ell_\alpha$
iff\/ $\alpha$ is an eigenvalue of\/ $F$.
\end{prop}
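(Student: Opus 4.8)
The plan is to identify the line $\ell_\alpha$ explicitly in terms of the characteristic polynomial of $F$. First I would recall that the characteristic polynomial of $F\in\SU(2,1)$ is $\chi_F(\lambda)=\lambda^3-\trace(F)\lambda^2+\overline{\trace(F)}\lambda-1$, using $\det F=1$ and the fact that the trace of $F^{-1}$ equals $\overline{\trace F}$ (since eigenvalues of an element of $\SU(2,1)$ come in the pattern $\mu,\nu,\overline{\mu}^{-1}\overline{\nu}^{-1}$ with all reciprocals conjugates, more simply $F^{-1}$ is conjugate-transpose-like so its eigenvalues are the conjugates of those of $F$). Hence $\alpha\in\mathbb S^1$ is an eigenvalue of $F$ exactly when $\chi_F(\alpha)=0$, i.e. when
\[
\alpha^3-\trace(F)\,\alpha^2+\overline{\trace(F)}\,\alpha-1=0.
\]
Writing $\tau:=\trace F$ and separating the part of this equation that is linear in $\tau$ and $\overline\tau$, this is an affine-linear condition on $\tau\in\mathbb C$ (viewing $\mathbb C\cong\mathbb R^2$): $\tau\alpha^2-\overline\tau\alpha=\alpha^3-1$, i.e. $\real$ and $\imag$ parts give one real linear equation (the equation $\tau\alpha^2-\overline\tau\alpha = \alpha^3-1$ and its conjugate are complex-conjugate, so together they cut out a real line). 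So the locus $\{\tau\in\mathbb C : \alpha \text{ is an eigenvalue of some }F\in\SU(2,1)\text{ with }\trace F=\tau\}$ is a real affine line $L_\alpha$ in $\mathbb C$.

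Next I would check that this line $L_\alpha$ is exactly $\ell_\alpha=\tau_{\alpha_1,\alpha_2}(\mathbb R)$ for any factorization $\alpha=\alpha_1\alpha_2$. The cleanest way is to verify that $\tau_{\alpha_1,\alpha_2}(t)$ satisfies the linear equation above for every $t$: plugging $\tau=\tau_{\alpha_1,\alpha_2}(t)$ into $\tau\alpha^2-\overline\tau\alpha=\alpha^3-1$ with $\alpha=\alpha_1\alpha_2$ and using $\overline{\alpha_i}=\alpha_i^{-1}$ should collapse to an identity, with the $t$-dependent terms cancelling because the coefficient $(\alpha_1^{-2}-\alpha_1)(\alpha_2^{-2}-\alpha_2)$ is designed precisely so that $(\alpha_1^{-2}-\alpha_1)(\alpha_2^{-2}-\alpha_2)\alpha^2$ is the conjugate of $(\alpha_1^{-2}-\alpha_1)(\alpha_2^{-2}-\alpha_2)\alpha$ — indeed $\overline{(\alpha_1^{-2}-\alpha_1)(\alpha_2^{-2}-\alpha_2)}=(\alpha_1^2-\alpha_1^{-1})(\alpha_2^2-\alpha_2^{-1})=\alpha^{-1}\alpha^{-2}\cdot(\alpha_1^{-2}-\alpha_1)(\alpha_2^{-2}-\alpha_2)$ up to the right powers, giving exactly the relation needed. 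Since $\ell_\alpha$ is a line (by Lemma~\ref{lemma:tracetau} and \cite[Lemma~6.17]{spell}) contained in the line $L_\alpha$, and depends only on $\alpha$, and both are genuine lines (one-dimensional), we conclude $\ell_\alpha=L_\alpha$, which is the claim: $\trace F\in\ell_\alpha\iff\chi_F(\alpha)=0\iff\alpha$ is an eigenvalue of $F$.

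Alternatively, and perhaps more in the spirit of the paper, I would argue directly: if $\alpha$ is an eigenvalue of $F$, then $\overline\alpha=\alpha^{-1}$ is an eigenvalue of $F^{-1}$, and one can realize $F$ (up to conjugacy, which does not change the trace) as a product $R_{\alpha_2}^{p_2}R_{\alpha_1}^{p_1}$ with $\alpha_1\alpha_2=\alpha$ for a suitable choice of $p_1,p_2$ and $\tance(p_1,p_2)=t$ — this uses that every element of $\SU(2,1)$ with $\alpha$ as an eigenvalue admits such a $2$-element decomposition (the relevant statement should be Proposition~\ref{prop:a1a2loxpardecomp}/\ref{prop:a1a2eldecomp}, or can be seen by explicit block decomposition on $\mathbb C\alpha\text{-eigenspace}\oplus(\text{its orthogonal})$). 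Then Lemma~\ref{lemma:tracetau} gives $\trace F=\tau_{\alpha_1,\alpha_2}(t)\in\ell_\alpha$. Conversely, if $\trace F\in\ell_\alpha$, pick $t$ with $\tau_{\alpha_1,\alpha_2}(t)=\trace F$; realizing this trace by a product $R_{\alpha_2}^{p_2}R_{\alpha_1}^{p_1}$ with $\tance(p_1,p_2)=t$ produces an isometry $G$ with $\trace G=\trace F$ and with $\alpha_1$ (hence $\alpha=\alpha_1\alpha_2$? — need to check which eigenvalue) an eigenvalue. The subtlety here, and what I expect to be the main obstacle, is that $\trace F=\trace G$ does not imply $F$ and $G$ are conjugate when the common trace lies in $\Delta$, so one cannot immediately transport the eigenvalue information from $G$ to $F$. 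However, having $\alpha$ as an eigenvalue is detected purely by the characteristic polynomial, which is determined by $\trace F$ alone (via $\det=1$ and the conjugate-trace relation); so $\chi_F=\chi_G$ regardless of conjugacy, and $\chi_G(\alpha)=0$ forces $\chi_F(\alpha)=0$. This closes the argument, and in fact shows the first (characteristic-polynomial) approach is the robust one — the eigenvalue condition is a trace condition, and the content of the proposition is the linear-algebra identity that this trace condition cuts out precisely the tangent line $\ell_\alpha$.
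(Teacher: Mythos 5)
Your proposal is correct, and its main argument takes a genuinely different route from the paper's. You reduce everything to the self-duality of the characteristic polynomial in $\SU(2,1)$: $\chi_F(\lambda)=\lambda^3-\tau\lambda^2+\overline\tau\lambda-1$ with $\tau=\trace F$, so that ``$\alpha$ is an eigenvalue of $F$'' becomes the single real-affine condition $\tau\alpha^2-\overline\tau\alpha=\alpha^3-1$ on $\tau$ alone; the kernel of the real-linear map $\tau\mapsto\tau\alpha^2-\overline\tau\alpha$ is the one-dimensional real span of $e^{-ai/2}$ (for $\alpha=e^{ai}$), so the locus is a genuine real affine line, and the identity $\overline{(\alpha_1^{-2}-\alpha_1)(\alpha_2^{-2}-\alpha_2)}=\alpha_1\alpha_2\,(\alpha_1^{-2}-\alpha_1)(\alpha_2^{-2}-\alpha_2)$ (your stated power of $\alpha$ there is off, but as you suspected the relation needed, $\overline{c}\alpha=c\alpha^2$, does hold) shows $\tau_{\alpha_1,\alpha_2}(t)$ lies on that line for every real $t$, forcing it to coincide with $\ell_\alpha$. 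The paper instead argues geometrically in both directions: for $\trace F\in\ell_\alpha$ it realizes the trace as $\trace R_{\alpha_2}^{p_2}R_{\alpha_1}^{p_1}$ via Lemma~\ref{lemma:tracetau} and then invokes exactly the fact you isolate --- that the trace determines the spectrum --- to transfer the eigenvalue $\alpha_1\alpha_2$ to $F$; for the converse it splits into cases, verifying by hand that a certain quotient is real when $F$ is not loxodromic, and using uniqueness of the unit-modulus eigenvalue when $F$ is loxodromic. Your characteristic-polynomial argument buys a uniform, case-free proof that makes transparent why the proposition is purely a statement about $\trace F$; the paper's version stays inside the machinery of tances and products of complex reflections used throughout, at the cost of the loxodromic/non-loxodromic split. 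Your ``alternative'' second argument is essentially the paper's proof, and you correctly identify and resolve its one subtlety (trace does not determine the conjugacy class inside $\Delta$, but it does determine the characteristic polynomial).
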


\begin{proof}
Suppose that $\trace F\in\ell_{\alpha}$, and let $\alpha_1,\alpha_2\in\mathbb S^1
\setminus\Omega$ be such that $\alpha_1\alpha_2=\alpha$. Then, 
$\ell_\alpha=\ell_{\alpha_1\alpha_2}$ which implies that there exists 
$t\in\mathbb R$ such that $\tau_{\alpha_1,\alpha_2}(t)=\trace F$.
Taking $p_1,p_2\in\PV\setminus\SV$ with $\tance(p_1,p_2)=t$, by 
Lemma~\ref{lemma:tracetau}, we have 
$\trace R_{\alpha_2}^{p_2}R_{\alpha_1}^{p_1}=\trace F$ and, since trace determines
eigenvalues and $\alpha_1\alpha_2$ is an eigenvalue of~$R_{\alpha_2}^{p_2}R_{\alpha_1}^{p_1}$,
$\alpha$ is an eigenvalue of $F$.

Conversely, suppose that $\alpha\in\mathbb S^1$ is an eigenvalue of $F$.

Assume that $F$ is not loxodromic.
Then $\trace F=\alpha+\beta+\overline\alpha\overline\beta$ for some 
$\beta\in\mathbb S^1$. In fact, if $F$ is not loxodromic, then it can be triangularized
with unit norm eigenvalues (see \cite[Section~3.2]{Parker2012}). 
Take $\alpha_1,\alpha_2\in\mathbb S^1\setminus\Omega$ 
with $\alpha_1\alpha_2=\alpha$. Note that,
$$z=\frac{\beta+\alpha_1^{-1}\alpha_2^{-1}\beta^{-1}-
\alpha_1^{-2}\alpha_2-\alpha_1\alpha_2^{-2}}{(\alpha_1^{-2}-\alpha_1)
(\alpha_2^{-2}-\alpha_2)}\in\mathbb R.$$
In fact, it is easy to see that $z-\overline z=0$.
Hence, the equation $\tau_{\alpha_1,\alpha_2}(t)=\trace F$ has a real
solution in~$t$; therefore $\trace F\in\ell_{\alpha_1\alpha_2}=\ell_\alpha$.

Now, assume that $F$ is loxodromic. Let $\ell_\beta$ be a line tangent to the
deltoid and passing through~$\trace F$. Thus, given parameters 
$\alpha_1,\alpha_2\in\mathbb S^1\setminus\Omega$ with $\alpha_1\alpha_2=\beta$,
there exist points $p_1,p_2\in\PV\setminus\SV$ such that 
$\trace R_{\alpha_2}^{p_2}R_{\alpha_1}^{p_1}=\trace F$, 
and $\beta$ is an eigenvalue of $F$. Since a loxodromic isometry
cannot have two unit norm eigenvalues (see~\cite[Lemma~3.2]{Parker2012}), we conclude that $\alpha=\beta$ and
$\trace F\in\ell_{\alpha}$.
\end{proof}

It follows that, given $\tau\in\Delta^\circ$, there exists exactly three lines 
through $\tau$ that are tangent to $\partial\Delta$. Writing these lines as
$\ell_\alpha$, $\ell_\beta$, $\ell_{\gamma}$, we have 
$\gamma=\overline\alpha\overline\beta$,  
$\tau=\alpha+\beta+\overline\alpha\overline\beta$ and the eigenvalues
of any isometry with trace $\tau$ are $\alpha,\beta,\overline\alpha\overline\beta$. 
In the case where $\tau\in\partial\Delta$, there are exactly two lines tangent to 
$\partial\Delta$ that contain\/ $\tau$; if one of the lines is $\ell_\alpha$, 
the other is $\ell_{\alpha^{-2}}$.

Furthermore, two distinct lines $\ell_\alpha$ and $\ell_\beta$ intersect in $\Delta$.
In fact, they cannot be parallel since there exists an isometry with trace
$\tau:=\alpha+\beta+\overline\alpha\overline\beta$ and 
$\tau\in\ell_{\alpha}\cap\ell_{\beta}$ by the previous proposition.
Moreover, if they intersect at a point $\tau\in\mathbb C\setminus\Delta$,
then $\tau$ is the trace of a loxodromic isometry that have 
$\alpha$ and $\beta$ as eigenvalues, what cannot happen as loxodromic isometries have a
single unitary eigenvalue.

\subsection{Tangent lines and the unfolded trace}
Here we describe the inverse image under $\alttrace$ of lines tangent to the deltoid;
it will be composed by line segments of slopes $-1,\frac{1}2,2$ in $\mathsf T$.

\begin{lemma}
\label{lemma:walls}
Given\/ $\alpha\in\mathbb S^1$, the inverse image\/
$\alttrace^{-1}(\ell_\alpha\cup\ell_{\omega\alpha}\cup\ell_{\omega^2\alpha})$
is given by the projection on\/ $\mathsf T$ of the lines in\/
$\mathbb R^2$ defined by the equations\/ $y=-x-3a$ and\/ $y=2x-3a$, where\/
$0\leq a<2\pi$ is such that\ $\alpha=e^{ai}$. More precisely,
$\alttrace^{-1}(\ell_\alpha\cup\ell_{\omega\alpha}\cup\ell_{\omega^2\alpha})$
is given by the union of the segments

\smallskip

$\bullet$ $\big[\big(\frac{3a}2,0\big),(3a,3a)\big]$, $\big[(3a,3a),
\big(2\pi,\pi+\frac{3a}2\big)\big]$, $\big[\big(2\pi-\frac{3a}2,2\pi-\frac{3a}2\big),
(2\pi,2\pi-3a)\big]$, $\big[(2\pi,2\pi-3a),\big(\pi+\frac{3a}2,0\big)\big]$,
$\big[\big(\pi-\frac{3a}2,\pi-\frac{3a}2\big),(2\pi-3a,0)\big]$,
$\big[(2\pi-3a,0),\big(2\pi,\frac{3a}2\big)\big]$,
if\/ $0\leq a < \frac{2\pi}3$;

\smallskip

$\bullet$ $\big[(3\pi-\frac{3a}2,3\pi-\frac{3a}2),(2\pi,4\pi-3a)\big]$,
$\big[(2\pi,4\pi-3a),(\frac{3a}2,0)\big]$,
$\big[(2\pi,\frac{3a}2-\pi),(4\pi-3a,0)\big]$,
$\big[(4\pi-3a,0),(2\pi-\frac{3a}2,2\pi-\frac{3a}2)\big]$,
$\big[(\frac{3a}2-\pi,0),(3a-2\pi,3a-2\pi)\big]$,
$\big[(3a-2\pi,3a-2\pi),(2\pi,\frac{3a}2)\big]$,
if\/ $\frac{2\pi}3\leq a < \frac{4\pi}3$;

\smallskip

$\bullet$ $\big[(2\pi,\frac{3a}2-2\pi),(6\pi-3a,0)\big]$,
$\big[(6\pi-3a,0),(3\pi-\frac{3a}2,3\pi-\frac{3a}2)\big]$,
$\big[(\frac{3a}2-2\pi,0),(3a-4\pi,3a-4\pi)\big]$,
$\big[(3a-4\pi,3a-4\pi),(2\pi,\frac{3a}2-\pi)\big]$,
$\big[(4\pi-\frac{3a}2,4\pi-\frac{3a}2),(2\pi,6\pi-3a)\big]$,
$\big[(2\pi,6\pi-3a),(\frac{3a}2-\pi,0)\big]$,
if\/ $\frac{4\pi}3\leq a <2\pi$.
\end{lemma}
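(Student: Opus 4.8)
The plan is to compute, for each side of the boundary of $\mathsf T$, the explicit formula for $\alttrace$ restricted to that side (these three formulas are already recorded at the end of the proof of Proposition~\ref{prop:alttraceinj}), and then ask: for which boundary points does the value land on $\ell_\alpha\cup\ell_{\omega\alpha}\cup\ell_{\omega^2\alpha}$? By Proposition~\ref{prop:traceeigen}, $\trace F\in\ell_\beta$ iff $\beta$ is an eigenvalue of $F$; applying this to a regular elliptic $F$ with angle pair $(\theta_1,\theta_2)$, whose eigenvalues are exactly $e^{\frac{2\theta_1-\theta_2}{3}i}$, $e^{\frac{2\theta_2-\theta_1}{3}i}$, $e^{-\frac{\theta_1+\theta_2}{3}i}$ by the definition of $E_{\theta_1,\theta_2}$, we get that $\alttrace(\theta_1,\theta_2)\in\ell_\alpha\cup\ell_{\omega\alpha}\cup\ell_{\omega^2\alpha}$ iff one of these three eigenvalues equals $\alpha$ up to a cube root of unity, i.e. iff one of $2\theta_1-\theta_2$, $2\theta_2-\theta_1$, $-\theta_1-\theta_2$ is congruent to $3a$ modulo $2\pi$ (using $\alpha=e^{ai}$ and that multiplying by $\omega$ shifts the exponent by $2\pi/3$, so the exponent times $3$ shifts by $2\pi$). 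This is the conceptual heart of the argument and it reduces the whole lemma to elementary linear algebra over $\mathbb R^2$.

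Next I would translate the three congruences into lines in the $(\theta_1,\theta_2)=(x,y)$-plane. The condition $-x-y\equiv 3a\pmod{2\pi}$ gives the family $y=-x-3a+2\pi k$; since the relevant tangent lines are $\ell_\alpha,\ell_{\omega\alpha},\ell_{\omega^2\alpha}$ and these correspond exactly to the three residues mod $2\pi/3$, all three get absorbed and we obtain precisely the single slope-$(-1)$ line $y=-x-3a$ once we also reduce mod the $\omega$-shift; similarly $2x-y\equiv 3a$ and $2y-x\equiv 3a$ each give slope-$2$ (equivalently slope-$\tfrac12$) lines, and because the three choices of eigenvalue together with the $\omega$-ambiguity collapse, the full inverse image is the projection to $\mathsf T$ of just the two lines $y=-x-3a$ and $y=2x-3a$. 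Here I must be careful with the identification $(\theta,0)\simeq(2\pi,\theta)$ defining $\mathsf T/\simeq$ and with the boundary behaviour, since $\alttrace$ is only well-defined on $\mathcal E^{\mathrm{reg}}=\mathsf T^\circ$ and extends to $\partial\mathsf T$ by the three boundary formulas; Figure~\ref{fig:alttraceborder} already records how the sides map onto $\partial\Delta$, and one checks the two lines meet the three sides of $\mathsf T$ consistently with that picture.

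The remaining work is purely bookkeeping: intersect the two lines $y=-x-3a$ and $y=2x-3a$ (and their $2\pi$-translates, i.e. $y=-x-3a+2\pi k$ and $y=2x-3a+2\pi k$ for the integer values of $k$ that make the line meet the triangle $0\le y\le x\le 2\pi$) with $\mathsf T$, split into the three cases $0\le a<\tfrac{2\pi}{3}$, $\tfrac{2\pi}{3}\le a<\tfrac{4\pi}{3}$, $\tfrac{4\pi}{3}\le a<2\pi$ according to how many translates are relevant and where they cross the three sides, and read off the endpoints of the resulting segments. In each case one finds six segments; computing their endpoints is a routine intersection of a line with $x=2\pi$, with $y=0$, or with $y=x$, and matching the list in the statement. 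The main obstacle is not any single computation but the combinatorial care needed in the case split: one has to make sure every translate $y=-x-3a+2\pi k$ and $y=2x-3a+2\pi k$ that genuinely meets $\mathsf T$ is counted exactly once (no double counting across the identification $(\theta,0)\simeq(2\pi,\theta)$), that the segments are listed with their correct endpoints on the correct sides, and that the transition values $a=\tfrac{2\pi}{3}$ and $a=\tfrac{4\pi}{3}$ are handled consistently with the stated half-open intervals. A clean way to organize this is to first do the case $0\le a<\tfrac{2\pi}{3}$ in full detail, then observe that replacing $a$ by $a-\tfrac{2\pi}{3}$ corresponds to multiplying $\alpha$ by $\omega$, which by Corollary~\ref{cor:deltaalpha}-type reasoning (or simply by inspection of the defining equations $y=-x-3a$, $y=2x-3a$ modulo $2\pi$) shifts the whole configuration in a controlled way, so the other two cases follow by a change of variables rather than a fresh computation.
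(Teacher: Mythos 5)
Your proposal is correct and follows essentially the same route as the paper: both rest on Proposition~\ref{prop:traceeigen} to translate membership of $\alttrace(\theta_1,\theta_2)$ in $\ell_\alpha\cup\ell_{\omega\alpha}\cup\ell_{\omega^2\alpha}$ into the condition that one of the eigenvalue exponents $2\theta_1-\theta_2$, $2\theta_2-\theta_1$, $-\theta_1-\theta_2$ is congruent to $3a$ modulo $2\pi$, and then reduce to the projection on $\mathsf T$ of the two lines $y=-x-3a$ and $y=2x-3a$ plus case-by-case bookkeeping. The paper runs the same argument in the opposite direction (parametrizing $\ell_\alpha$ by the free eigenvalue $e^{ti}$ and listing the three resulting angle-pair families $\{t-a,-t-2a\}$, $\{-t+a,-2t-a\}$, $\{t+2a,2t+a\}$), but the content is identical.
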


\begin{proof}
Let $\tau\in\ell_\alpha$. By Proposition~\ref{prop:traceeigen}, every isometry
with trace $\tau$ has $\alpha$ as an eigenvalue. We write $\alpha=e^{ai}$,
$0\leq a<2\pi$, and
write the other eigenvalues as $e^{ti}$ and $e^{-(a+t)i}$, $0\leq t<2\pi$;
this is well defined since the trace of an element of $\SU(2,1)$ determines its
eigenvalues. It follows that the (nonoriented) angle pairs of the isometries in 
$\PU(2,1)$ with trace $\tau$ are given 
by $\{t-a,-t-2a\}$, $\{-t+a,-2t-a\}$ and $\{t+2a,2t+a\}$, with $t$ varying.
The projection of this pair in $\mathsf T$ coincides with the projection 
of the lines defined in the proposition.
\end{proof}

\begin{figure}[!ht]
\centering
\includegraphics[scale=1]{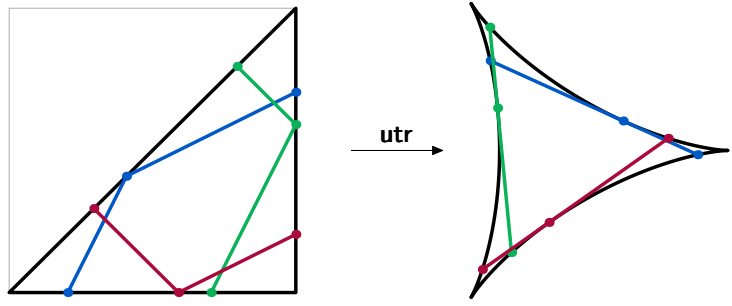}
\caption{Inverse image of lines tangent to $\partial\Delta$. (Colors are
visible in the online version)}
\label{fig:alttracetang}
\end{figure}

\begin{cor}
\label{cor:imageella}
Given\/ $\alpha\in\mathbb S^1$, the inverse image under\/ $\alttrace$ of the line\/ 
$\ell_\alpha$ is given by

\smallskip

$\bullet$ $\big[(\frac{3a}2,0),(3a,3a)\big]\cup\big[(3a,3a),(2\pi,\pi+\frac{3a}2)\big]$,
if\/ $0\leq a<2\pi/3$;

\smallskip

$\bullet$ $\big[(3\pi-\frac{3a}2,3\pi-\frac{3a}2),(2\pi,4\pi-3a)\big]\cup
\big[(2\pi,4\pi-3a),(\frac{3a}2,0)\big]$, if\/ $2\pi/3\leq a<4\pi/3$;

\smallskip

$\bullet$ $\big[(2\pi,\frac{3a}2-2\pi),(6\pi-3a,0)\big]\cup
\big[(6\pi-3a,0),(3\pi-\frac{3a}2,3\pi-\frac{3a}2)\big]$, if\/ $4\pi/3\leq a<2\pi$,

\smallskip

\noindent where $0\leq a<2\pi$ is such that $\alpha=e^{ai}$.
\end{cor}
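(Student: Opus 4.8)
The plan is to extract Corollary~\ref{cor:imageella} from Lemma~\ref{lemma:walls} by a simple bookkeeping argument: the lemma already computes $\alttrace^{-1}(\ell_\alpha\cup\ell_{\omega\alpha}\cup\ell_{\omega^2\alpha})$ as a union of six segments (in each of the three ranges of $a$), and what remains is to identify which of those six segments come from $\ell_\alpha$ itself, as opposed to $\ell_{\omega\alpha}$ or $\ell_{\omega^2\alpha}$. First I would recall from the proof of Lemma~\ref{lemma:walls} the structure behind the six segments: for $\tau\in\ell_\alpha$ with $\alpha=e^{ai}$, an isometry of trace $\tau$ has eigenvalues $e^{ai},e^{ti},e^{-(a+t)i}$, and its three possible angle pairs (depending on which eigenvalue is of negative type) are $\{t-a,-t-2a\}$, $\{-t+a,-2t-a\}$, $\{t+2a,2t+a\}$. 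Projecting each of these one-parameter families into $\mathsf T$ and tracking the piecewise-linear behaviour as $t$ runs over $[0,2\pi)$ yields exactly two of the six segments per family, hence six in total; for $\ell_{\omega\alpha}$ and $\ell_{\omega^2\alpha}$ one gets the same six segments, since $\omega\alpha=e^{(a+2\pi/3)i}$ merely cyclically permutes the three lines and hence their preimages coincide as sets.

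The key step is therefore to isolate the two segments attributable to $\ell_\alpha$. I would do this by observing that the three angle-pair families $\{t-a,-t-2a\}$, $\{-t+a,-2t-a\}$, $\{t+2a,2t+a\}$ are distinguished by which eigenvalue, $e^{ai}$, $e^{ti}$ or $e^{-(a+t)i}$, is of negative type; equivalently, by the discussion in Section~\ref{sec:conjclasses} and Subsection~\ref{subsec:puclasses}, the three $\PU(2,1)$-classes over a fixed $\tau\in\Delta^\circ$ are permuted by multiplication by $\omega$ in exactly the way that sends $\ell_\alpha\mapsto\ell_{\omega\alpha}\mapsto\ell_{\omega^2\alpha}$. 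Concretely, $\alttrace^{-1}(\ell_{\omega\alpha})$ and $\alttrace^{-1}(\ell_{\omega^2\alpha})$ are obtained from $\alttrace^{-1}(\ell_\alpha)$ by the substitutions $a\mapsto a+\tfrac{2\pi}{3}$ and $a\mapsto a+\tfrac{4\pi}{3}$ in the formulas above; so the six segments of Lemma~\ref{lemma:walls} group naturally into three pairs, and the pair containing no shift of $a$ by $\pm\tfrac{2\pi}{3}$ is the one for $\ell_\alpha$. Carrying this out for $0\le a<2\pi/3$, the segment $\big[(\tfrac{3a}{2},0),(3a,3a)\big]$ together with $\big[(3a,3a),(2\pi,\pi+\tfrac{3a}{2})\big]$ is the image of the family giving $e^{ai}$ as negative-type eigenvalue, while the other four segments involve $\tfrac{3a}{2}-\pi$, $2\pi-\tfrac{3a}{2}$, etc.\ — precisely the shifts $a\mapsto a\pm\tfrac{2\pi}{3}$ applied to those two segments — and so belong to $\ell_{\omega\alpha}$ and $\ell_{\omega^2\alpha}$. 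The same matching, done range-by-range, gives the second and third bullet points for $2\pi/3\le a<4\pi/3$ and $4\pi/3\le a<2\pi$.

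The only genuine care needed — and the step I expect to be the main obstacle — is the range bookkeeping: verifying that for each of the three intervals $[0,\tfrac{2\pi}{3})$, $[\tfrac{2\pi}{3},\tfrac{4\pi}{3})$, $[\tfrac{4\pi}{3},2\pi)$ the endpoint arithmetic lands correctly inside $\mathsf T=\{0\le\theta_2\le\theta_1\le2\pi\}$ after the identification $(\theta,0)\simeq(2\pi,\theta)$, so that the segment $\big[(\tfrac{3a}{2},0),(3a,3a)\big]\cup\big[(3a,3a),(2\pi,\pi+\tfrac{3a}{2})\big]$ (and its shifted counterparts) really is the connected piecewise-linear curve it is claimed to be, with the correct break at the diagonal. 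This is a finite, mechanical check: one traces the family $t\mapsto\{t-a,-t-2a\}$, reduces mod $2\pi$, folds into $\mathsf T$, and confirms the break occurs where $t-a$ crosses $0$ (or $2\pi$) and where the two coordinates coincide. Once this is verified in the first range, the other two ranges follow by the substitution $a\mapsto a+\tfrac{2\pi}{3}$, which is exactly the operation already used to pass from $\ell_\alpha$ to $\ell_{\omega\alpha}$, so no new computation is required. The statement then follows.
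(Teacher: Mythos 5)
Your overall strategy -- start from Lemma~\ref{lemma:walls} and decide which two of the six segments belong to $\ell_\alpha$ rather than to $\ell_{\omega\alpha}$ or $\ell_{\omega^2\alpha}$ -- is the same as the paper's, but the criterion you use to make that decision is wrong. You claim that, for $0\le a<2\pi/3$, the curve $\big[(\tfrac{3a}{2},0),(3a,3a)\big]\cup\big[(3a,3a),(2\pi,\pi+\tfrac{3a}{2})\big]$ is swept out by the family of classes having $e^{ai}$ as negative-type eigenvalue, i.e.\ by the angle pairs $\{t-a,-t-2a\}$. That family has angle sum $-3a$ before reduction, so after reducing mod $2\pi$ it lands on the lines $\theta_1+\theta_2=2\pi-3a$ and $\theta_1+\theta_2=4\pi-3a$: these are precisely the two segments of slope $-1$ in Lemma~\ref{lemma:walls}, which belong to $\alttrace^{-1}(\ell_{\omega\alpha}\cup\ell_{\omega^2\alpha})$, not to $\alttrace^{-1}(\ell_\alpha)$ (the corollary's answer for $0\le a<2\pi/3$ has slopes $2$ and $\tfrac12$ only). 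The reason is that the mod-$2\pi$ reduction shifts the negative-eigenvalue slot of $E_{\theta_1,\theta_2}$: on $\theta_1+\theta_2=2\pi-3a$ one has $e^{-\frac{\theta_1+\theta_2}{3}i}=\omega^2e^{ai}$, so $E_{\theta_1,\theta_2}=\omega^2F$ and $\alttrace=\omega^2\trace F\in\ell_{\omega^2\alpha}$. Concretely, for $a=\pi/3$ the point $(3\pi/4,\pi/4)$ lies in your family (take $t=13\pi/12$) but $E_{3\pi/4,\pi/4}$ has eigenvalues $e^{5\pi i/12},e^{-\pi i/12},e^{-\pi i/3}$, none of which is $e^{\pi i/3}$, so this point is not in $\alttrace^{-1}(\ell_\alpha)$. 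Relatedly, your meta-principle ``the pair containing no shift of $a$ by $\pm\tfrac{2\pi}{3}$'' is not well defined: all six segments of Lemma~\ref{lemma:walls} are written in terms of $a$, the substitution $a\mapsto a+\tfrac{2\pi}{3}$ followed by folding merely permutes them with no canonical ``unshifted'' member, and in fact the three connected curves are not in bijection with the three negative-type families -- each family is chopped by the reductions and redistributed among the curves.

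The fix is to use an identification that refers to $E_{\theta_1,\theta_2}$ itself rather than to the negative-type eigenvalue of the underlying class. By Proposition~\ref{prop:traceeigen}, $(\theta_1,\theta_2)\in\alttrace^{-1}(\ell_\alpha)$ iff $e^{ai}$ is an eigenvalue of $E_{\theta_1,\theta_2}$, i.e.\ iff one of $\tfrac{2\theta_1-\theta_2}{3}$, $\tfrac{2\theta_2-\theta_1}{3}$, $-\tfrac{\theta_1+\theta_2}{3}$ equals $a$ mod $2\pi$; intersecting these three loci with $\mathsf T$ for each range of $a$ gives exactly the stated segments (for $0<a<2\pi/3$ the third locus is empty in $\mathsf T$, which is why no slope $-1$ appears). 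Alternatively -- and this is the paper's route -- use the boundary formulas $\alttrace(\theta,\theta)=2e^{\theta i/3}+e^{-2\theta i/3}$ and $\alttrace(\theta,0)=\alttrace(0,\theta)=2e^{-\theta i/3}+e^{2\theta i/3}$ from the proof of Proposition~\ref{prop:alttraceinj}: each of the three connected curves has its vertex on a specific side of $\mathsf T$, and only the vertex $(3a,3a)$ maps to the tangency point $2\alpha+\alpha^{-2}$ of $\ell_\alpha$ with $\partial\Delta$, which pins down the correct curve.
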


\begin{proof}
By Lemma~\ref{lemma:walls}, $\alttrace^{-1}(\ell_\alpha\cup\ell_{\omega\alpha}\cup
\ell_{\omega^2\alpha})$
is either the union of three connected curves, each being the union of two line
segments with a vertex in the boundary of $\mathsf T$, if $a\neq 0,2\pi/3,4\pi/3$,
or a median of $\mathsf T$, if $a=0,2\pi/3,4\pi/3$.
Since we know how $\alttrace$ sends the boundary of $\mathsf T$ onto $\partial\Delta$
(see Figure~\ref{fig:alttraceborder}), we determine which of these three curves is 
$\alttrace^{-1}(\ell_\alpha)$.
\end{proof}

Figure~\ref{fig:alttracetang} illustrates Lemma~\ref{lemma:walls} and 
Corollary~\ref{cor:imageella}.
The inverse of $\alttrace$ sends each line tangent to
$\partial\Delta\setminus\{3,3\omega,3\omega^2\}$ to a curve
that is the union of two line segments with a vertex in the boundary
of the triangle $\mathsf T$; it also sends each line tangent to
one of the vertices of $\partial\Delta$ to a median of $\mathsf T$.
The color in Figure~\ref{fig:alttraceborder} shows us how to color
Figure~\ref{fig:alttracetang} (see online version). Pictures relating Goldman's deltoid and 
the triangle $\mathsf T$ have appeared before (see, for example, 
\cite[Figure~1]{PP2009}).

\section{Length 2 decompositions}
\label{sec:length2decomp}

In this section, using the tools introduced in Section~\ref{sec:altracetangent},
we study decompositions of isometries as the product of two
special elliptic isometries. First, in Subsection~\ref{subsec:generic}, 
we prove that every isometry that is not $2$-step unipotent admits
an $(\alpha_1,\alpha_2)$-decomposition, for some parameters $\alpha_1,\alpha_2$.

In Subsection~\ref{subsec:fixedpar}, for given parameters $\alpha_1,\alpha_2$, 
we obtain all isometries that admit an $(\alpha_1,\alpha_2)$-decomposition.
In particular, we determine
which isometries have $\alpha$-length $2$ for a given parameter~$\alpha$.

\subsection{Generic decompositions}
\label{subsec:generic}

The following lemma characterizes the product of two special elliptic isometries
that have their centers generating an Euclidean complex line.

\begin{lemma}
\label{lemma:euclideanline}
Let\/ $p_1,p_2\in\PV\setminus\SV$ be distinct nonisotropic points and let\/
$\alpha_1,\alpha_2\in\mathbb S^1\setminus\Omega$ be parameters. If the complex line\/
$\mathrm L(p_1,p_2)$ is Euclidean, then the isometry\/ 
$R:=R_{\alpha_2}^{p_2}R_{\alpha_1}^{p_1}$ is parabolic. Moreover, either\/
$\alpha_1\alpha_2\notin\Omega$ and\/ $R$ is ellipto-parabolic, or\/ 
$\alpha_1\alpha_2\in\Omega$ and\/ $R$ is\/ $3$-step unipotent.
\end{lemma}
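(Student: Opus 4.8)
The plan is to work in the $2$-dimensional subspace $W := \mathbb Cp_1 + \mathbb Cp_2 \le V$. Since $\mathrm L(p_1,p_2)$ is Euclidean, the restriction of the Hermitian form to $W$ is degenerate, so its kernel is a $1$-dimensional isotropic subspace $\mathbb Cq$ with $q \in W$. First I would observe that both $R_{\alpha_1}^{p_1}$ and $R_{\alpha_2}^{p_2}$ stabilize $W$: indeed, from formula~\eqref{eq:specialelliptic}, $R_{\alpha_i}^{p_i}$ acts as multiplication by $\alpha_i$ on $p_i^\perp$ and sends any $x$ to $\alpha_i x$ plus a multiple of $p_i$, so it preserves any subspace containing $p_i$; in particular it preserves $W$. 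Consequently $R := R_{\alpha_2}^{p_2}R_{\alpha_1}^{p_1}$ stabilizes $W$, hence it stabilizes the radical $\mathbb Cq$ of the form on $W$, so $q$ is an eigenvector of $R$ with some unit eigenvalue, say $Rq = \lambda q$. The key structural point is then to identify $\lambda$ and to show $q$ is the \emph{only} isotropic fixed point structure available, which forces $R$ to be parabolic (not loxodromic, since $q$ would have to be one of two isotropic eigenlines with non-unit eigenvalues, impossible here; not elliptic, since an elliptic isometry has all eigenspaces nonisotropic).

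Next I would compute the eigenvalue $\lambda$ explicitly. Pick any vector $r \in W$ with $\langle r, q\rangle \ne 0$ (possible since the form on $W$ has rank $1$); then $\{q, r\}$ is a basis of $W$ and $\langle r, r\rangle \ne 0$ as well, and one can arrange $r \perp q$ is impossible, so instead use that $R$ acts on $W/\mathbb Cq$ — a $1$-dimensional space carrying the induced nondegenerate form — as multiplication by some unit $\nu$. Tracking determinants: $R$ restricted to $W$ has determinant $\lambda\nu$, and the third eigenvalue of $R$ on $V$ (on $W^\perp$, which is spanned by a single vector since $\dim W^\perp = 1$ and $W^\perp \not\subseteq W$ as the ambient form is nondegenerate) equals $\alpha_1\alpha_2 \cdot$ (the value by which each reflection scales $W^\perp$); more precisely each $R_{\alpha_i}^{p_i}$ acts on $p_i^\perp \supseteq W^\perp$ as $\alpha_i$, so $R$ acts on $W^\perp$ as $\alpha_1\alpha_2$. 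Since $\det R = 1$, we get $\lambda\nu\,\alpha_1\alpha_2 = 1$. A short direct computation using~\eqref{eq:specialelliptic} — evaluating $R$ on $q$, noting $\langle q, p_i\rangle$ may be nonzero but $\langle q, q\rangle = 0$ — shows $R q = \alpha_1\alpha_2\, q$ actually fails in general; rather one finds $\lambda = \alpha_1\alpha_2$ on the quotient-type eigenline and hence $\nu = \alpha_1^{-2}\alpha_2^{-2}$, or vice versa. The clean way is: the eigenvalue of $R$ associated with the \emph{positive} (or isotropic) fixed structure in the stable Euclidean line is determined by the product of the "rotation" scalars, and matching with $\det = 1$ pins everything down. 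This yields that the repeated eigenvalue of $R$ is $\alpha_1\alpha_2$ with the third eigenvalue $(\alpha_1\alpha_2)^{-2}$.

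With the eigenvalues in hand the dichotomy follows from the trace classification at the end of Section~\ref{sec:hypgeo}. If $\alpha_1\alpha_2 \notin \Omega$, then $\trace R = 2\alpha_1\alpha_2 + (\alpha_1\alpha_2)^{-2} \in \partial\Delta$ (it lies on the deltoid since it is $\alttrace$ evaluated at a boundary point of $\mathsf T$, or directly: it has a repeated unit eigenvalue so $\trace R \in \partial\Delta$) but $\trace R \notin \{3, 3\omega, 3\omega^2\}$ because that would require $\alpha_1\alpha_2 \in \Omega$; moreover $R$ is not elliptic because its fixed-point set in $W$ is the isotropic line $\mathbb Cq$ together with no negative point (the form on $W$ is degenerate, so $W$ contains no negative vector, and any $R$-fixed negative point would have to lie in some $R$-stable complex line — I would rule this out by noting $W$ is the unique $R$-stable Euclidean/degenerate line through the relevant structure). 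Hence $R$ is ellipto-parabolic. If instead $\alpha_1\alpha_2 \in \Omega$, then $\trace R = 2\delta + \delta^{-2} = 3\delta \in \{3,3\omega,3\omega^2\}$, so $R$ is unipotent; since $R$ is not the identity (as $p_1 \ne p_2$ and $\alpha_i \notin \Omega$) and it does \emph{not} pointwise fix the complex line $\mathrm L(p_1,p_2)$ — because $R_{\alpha_1}^{p_1}$ already moves $p_2$ off itself within $W$ unless $\tance(p_1,p_2)$ forces otherwise, and the Euclidean condition $\tance = 1$ does not — $R$ is not $2$-step unipotent, hence it is $3$-step unipotent.

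\textbf{Main obstacle.} The delicate step is pinning down the eigenvalue $\lambda$ on the isotropic line cleanly, and then \emph{rigorously} excluding the elliptic case — i.e., showing $R$ has no nonisotropic fixed point when $\mathrm L(p_1,p_2)$ is Euclidean. I expect the cleanest route is a direct matrix computation in a well-chosen basis of $W$ adapted to the degenerate form (e.g. $\{q, r\}$ with $\langle q,q\rangle = 0$, $\langle q, r\rangle = 1$, $\langle r,r\rangle = 0$ after rescaling, noting such an $r$ need not exist in a rank-$1$ form — so instead $\langle r, r\rangle$ some nonzero real), writing the $2\times 2$ matrices of $R_{\alpha_1}^{p_1}|_W$ and $R_{\alpha_2}^{p_2}|_W$ explicitly from~\eqref{eq:specialelliptic} and multiplying; this simultaneously gives the eigenvalues and shows the matrix of $R|_W$ is a single non-diagonalizable Jordan block (hence $R$ has only the $1$-dimensional isotropic eigenspace in $W$), settling both the eigenvalue computation and the non-elliptic claim in one stroke.
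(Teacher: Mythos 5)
Your overall plan is the right one (it is essentially the paper's: the polar point of the Euclidean line is an isotropic fixed point, the trace formula pins the eigenvalues, and the real work is excluding the elliptic case), but the execution contains two concrete errors and leaves the decisive step in a form that cannot work. First, the claim that $W^\perp\not\subseteq W$ because the ambient form is nondegenerate is false, and falsely so in exactly the case at hand: for a Euclidean line the polar point $q$ is isotropic and satisfies $q\in q^\perp=W$, so $W^\perp=\mathbb Cq$ \emph{is} the radical of $W$ and $V\neq W\oplus W^\perp$. Your determinant bookkeeping $\lambda\nu\,\alpha_1\alpha_2=1$, which treats $W^\perp$ as a third complementary eigendirection, therefore collapses. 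Second, you assert that $Rq=\alpha_1\alpha_2 q$ ``fails in general'' because $\langle q,p_i\rangle$ may be nonzero; in fact $q$ spans the radical of $W$ and $p_i\in W$, so $\langle q,p_i\rangle=0$, formula~\eqref{eq:specialelliptic} gives $R_{\alpha_i}^{p_i}q=\alpha_i q$, and $Rq=\alpha_1\alpha_2\,q$ is immediate. This is the easy part of the lemma and you have it backwards. (With $\tance(p_1,p_2)=1$ and Lemma~\ref{lemma:tracetau} one then gets $\trace R=2\alpha_1\alpha_2+(\alpha_1\alpha_2)^{-2}$, so the eigenvalues are $\alpha_1\alpha_2,\alpha_1\alpha_2,(\alpha_1\alpha_2)^{-2}$.)

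The genuine gap is in the step you yourself flag as delicate. Your proposed fix --- a $2\times2$ computation showing $R|_W$ is a single nondiagonalizable Jordan block --- cannot succeed in the ellipto-parabolic case: $\det(R|_W)=\alpha_1^{-1}\alpha_2^{-1}$ and one eigenvalue on $W$ is $\alpha_1\alpha_2$, so the other is $(\alpha_1\alpha_2)^{-2}$; when $(\alpha_1\alpha_2)^3\neq1$ these are distinct and $R|_W$ is diagonalizable, with a \emph{positive} eigenvector in $W$ (which is exactly the positive fixed point an ellipto-parabolic isometry must have). The failure of diagonalizability of $R$ lives transverse to $W$, so no computation confined to $W$ can rule out that $R$ is special elliptic with a two-dimensional $\alpha_1\alpha_2$-eigenspace meeting $W$ only in $\mathbb Cq$. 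What is needed --- and what the paper does --- is to solve $Rx=\alpha_1\alpha_2\,x$ in all of $V$ using~\eqref{eq:specialelliptic} and observe that any solution satisfies $\langle x,p_1\rangle=\langle x,p_2\rangle=0$, i.e.\ $x\in W^\perp=\mathbb Cq$; hence the $\alpha_1\alpha_2$-eigenspace is one-dimensional and isotropic, $R$ is not elliptic, and is unipotent precisely when $\alpha_1\alpha_2\in\Omega$ (and then $3$-step, since its eigenspaces are one-dimensional so it is regular). Your argument that $R$ does not pointwise fix $\mathrm L(p_1,p_2)$ can be made precise ($R$ moves $p_1$ because $p_1\neq p_2$ and $\langle p_1,p_2\rangle\neq0$), but as written it, too, is only a sketch.
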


\begin{proof}
By hypothesis, $\tance(p_1,p_2)=1$ (see Section~\ref{sec:hypgeo}) and,
by Lemma~\ref{lemma:tracetau},
$\trace R=\tau_{\alpha_1,\alpha_2}(1)=2\alpha_1\alpha_2+(\alpha_1\alpha_2)^{-2}$. 
Then, the isometry $R$ is either special elliptic or parabolic (not necessarily 
special elliptic since~$p_1\neq p_2$).
Let $v$ be the polar point of the line $\mathrm L(p_1,p_2)$; it follows that
$v$ is an isotropic fixed point of $R$.
Suppose that $R$ stabilizes a hyperbolic line $L$ through $v$. 
Using~\eqref{eq:specialelliptic} to solve the equation 
$R_{\alpha_2}^{p_2}R_{\alpha_1}^{p_1}x=\alpha_1\alpha_2 x$, we obtain that
a point $x\in L$ is $R$-fixed with eigenvalue $\alpha_1\alpha_2$ iff
$\langle x, p_1\rangle=\langle x,p_2\rangle=0$, which implies~$x=v$. Therefore, 
$R$ is parabolic, and it is unipotent iff $\alpha_1\alpha_2=\delta$ for some
$\delta\in\Omega$.
\end{proof}

\begin{rmk}(See~\cite[Section~6]{spell})
\label{rmk:rtilde}
Let $p_1,p_2\in\PV\setminus\SV$ be distinct points such that the line
$L:=\mathrm L(p_1,p_2)$ is hyperbolic and 
let~$\alpha_1,\alpha_2\in\mathbb S^1\setminus\Omega$ be parameters.

Suppose that the isometry $R:=R_{\alpha_2}^{p_2}R_{\alpha_1}^{p_1}$ is regular elliptic.
If $\tilde p_1,\tilde p_2$ are respectively the points in $L$ orthogonal to $p_1,p_2$, then 
$\tance(p_1,p_2)=\tance(\tilde p_1,\tilde p_2)$.
Moreover the isometries $R$ and $\widetilde R:=R_{\alpha_2}^{\tilde p_2}R_{\alpha_1}^{\tilde p_1}$
have the same trace but lie in distinct $\SU(2,1)$-conjugacy classes.

If the isometry $R$ is regular parabolic or loxodromic, an analogous process
produces an 
isometry~$\widetilde R$ that lies in the same $\SU(2,1)$-conjugacy class as $R$. This
implies that there exists a relation of the form $R_{\alpha_2}^{p_2}R_{\alpha_1}^{p_1}
=R_{\alpha_2}^{q_2}R_{\alpha_1}^{q_1}$ with $\sigma p_i=-\sigma q_i$ ($\sigma p$
stands for the signature of a point $p$). Relations obtained in this way are called 
{\it simultaneous change of signs}. 
\end{rmk}

\begin{prop}
Every isometry that is not\/ $2$-step unipotent admits an\/ 
$(\alpha_1,\alpha_2)$-decomposition, for some parameters\/ 
$\alpha_1,\alpha_2\in\mathbb S^1\setminus\Omega$. Moreover, $2$-step unipotent
isometries do not admit an $(\alpha_1,\alpha_2)$-decomposition, for any parameters
$\alpha_1,\alpha_2\in\mathbb S^1\setminus\Omega$.
\end{prop}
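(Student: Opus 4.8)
The plan is to split the statement into its two halves: first, that every non-$2$-step-unipotent isometry admits some $(\alpha_1,\alpha_2)$-decomposition; second, that no $2$-step unipotent isometry admits any such decomposition.

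For the positive direction, I would argue at the level of conjugacy classes, using Proposition~\ref{prop:decomp}(i). By Lemma~\ref{lemma:tracetau}, for fixed parameters $\alpha_1,\alpha_2$, the trace of $R_{\alpha_2}^{p_2}R_{\alpha_1}^{p_1}$ ranges over the entire affine line $\tau_{\alpha_1,\alpha_2}(\mathbb R)=\ell_{\alpha_1\alpha_2}$ as $\tance(p_1,p_2)=t$ varies over $\mathbb R$ (choosing representatives realizing any prescribed tance is routine: the tance of two nonisotropic points can be any real number by Sylvester). So the set of traces achieved, letting $\alpha_1,\alpha_2$ and the points vary, is the union of all tangent lines $\ell_\alpha$, $\alpha\in\mathbb S^1$, which is all of $\mathbb C$. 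Given a target isometry $F$, pick parameters so that $\trace F\in\ell_{\alpha_1\alpha_2}$; by Proposition~\ref{prop:traceeigen} this is exactly the condition that $\alpha_1\alpha_2$ is an eigenvalue of $F$, which can always be arranged (any unit-norm eigenvalue will do if $F$ is not loxodromic; if $F$ is loxodromic, take $\alpha_1\alpha_2$ to be its unique unit-norm eigenvalue). Then there exist points $p_1,p_2$ with $R:=R_{\alpha_2}^{p_2}R_{\alpha_1}^{p_1}$ having the same trace, hence the same eigenvalues, as $F$. To upgrade ``same trace'' to ``conjugate to $F$'' I would use the analysis of conjugacy classes from Section~\ref{sec:conjclasses} together with Remark~\ref{rmk:rtilde}: when $F$ is loxodromic, trace determines the class; when $F$ is regular elliptic, the two choices $R$ and $\widetilde R$ (simultaneous change of signs, adjusting which of $p_1,p_2$ is replaced by its orthogonal in $\mathrm L(p_1,p_2)$) realize the two relevant negative-type eigenvalues, and a further swap of the roles handles the third; ellipto-parabolic and $3$-step unipotent targets are covered by Lemma~\ref{lemma:euclideanline} (take $\mathrm L(p_1,p_2)$ Euclidean), with the dichotomy $\alpha_1\alpha_2\notin\Omega$ versus $\alpha_1\alpha_2\in\Omega$ matching ellipto-parabolic versus $3$-step unipotent; and special elliptic $F$ with parameter $\beta$ is handled directly by taking $p_1=p_2$ and using $R_{\beta_2}^{p}R_{\beta_1}^{p}=R_{\beta_1\beta_2}^{p}$, i.e. factoring $\beta=\beta_1\beta_2$. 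The identity is the product of $R_\alpha^p$ and $R_{\alpha^{-1}}^p$.

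For the negative direction I would argue directly. Suppose $R_{\alpha_2}^{p_2}R_{\alpha_1}^{p_1}$ is a lift of a $2$-step unipotent isometry; a $2$-step unipotent element of $\SU(2,1)$ has trace in $\{3,3\omega,3\omega^2\}$, so by Lemma~\ref{lemma:tracetau} we get $\tau_{\alpha_1,\alpha_2}(t)\in\{3,3\omega,3\omega^2\}$ for the value $t=\tance(p_1,p_2)$, which forces $\trace R$ to be a vertex of the deltoid. By the eigenvalue description preceding Definition~\ref{defi:regular}, such an $R$ is unipotent. If $p_1\neq p_2$ and $\mathrm L(p_1,p_2)$ is Euclidean, Lemma~\ref{lemma:euclideanline} says $R$ is $3$-step unipotent, not $2$-step; if $\mathrm L(p_1,p_2)$ is hyperbolic or spherical, then $t\notin\{1\}$... more to the point, I would invoke that a $2$-step unipotent isometry pointwise fixes a Euclidean complex line, so it is not regular in the sense of Definition~\ref{defi:regular}, and show that $R_{\alpha_2}^{p_2}R_{\alpha_1}^{p_1}$ with $p_1\neq p_2$ is always regular: its only candidate for a fixed complex line would be $\mathrm L(p_1,p_2)$ or its polar, and a direct computation with \eqref{eq:specialelliptic} shows it cannot pointwise fix either unless $p_1=p_2$ (the same kind of computation already done in the proof of Lemma~\ref{lemma:euclideanline}). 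Finally, if $p_1=p_2$ then $R=R_{\alpha_1\alpha_2}^{p_1}$ is special elliptic, hence not $2$-step unipotent. This exhausts all cases.

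The main obstacle is the bookkeeping in the positive direction: matching, for each conjugacy-class type of the target $F$, the correct choice of parameters $\alpha_1,\alpha_2$ (in particular whether $\alpha_1\alpha_2$ lands in $\Omega$ or not, which toggles between the unipotent and ellipto-parabolic/loxodromic behavior) and then using the simultaneous-change-of-signs relation from Remark~\ref{rmk:rtilde} to land in the precise $\SU(2,1)$-class rather than merely one with the right trace. Everything else is an application of Lemmas~\ref{lemma:tracetau}, \ref{lemma:euclideanline} and Proposition~\ref{prop:traceeigen}.
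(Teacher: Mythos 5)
Your overall strategy (realize $\trace F$ on a tangent line $\ell_{\alpha_1\alpha_2}$ via Lemma~\ref{lemma:tracetau}, then upgrade ``same trace'' to ``same conjugacy class'' using the classification of Section~\ref{sec:conjclasses}, Remark~\ref{rmk:rtilde}, and Lemma~\ref{lemma:euclideanline}) is the paper's strategy, and your treatment of the loxodromic, parabolic, special elliptic and identity cases, as well as of the second assertion about $2$-step unipotents, is essentially the paper's. But there is a genuine gap in the regular elliptic case, which is the heart of the argument. With the product $\alpha_1\alpha_2=\alpha$ fixed, the isometry $R=R_{\alpha_2}^{p_2}R_{\alpha_1}^{p_1}$ always fixes the polar point $c$ of $\mathrm L(p_1,p_2)$ with eigenvalue $\alpha$; when that line is hyperbolic, $c$ is positive, so $\alpha$ is \emph{never} the negative-type eigenvalue of $R$, nor of $\widetilde R$ (which is built from the same line). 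Hence the pair $\{[R],[\widetilde R]\}$ consists exactly of the two $\SU(2,1)$-classes of trace $\tau$ whose negative-type eigenvalue differs from $\alpha$, and the third class is unreachable this way. Your ``further swap of the roles'' does not repair this: replacing only one of $p_1,p_2$ by its orthogonal in $\mathrm L(p_1,p_2)$ changes the tance (to $1-t$) and hence the trace, while replacing both is just $\widetilde R$ again. Reaching the third class with $\alpha_1\alpha_2=\alpha$ would require $\mathrm L(p_1,p_2)$ spherical, i.e. $\tance(p_1,p_2)\in(0,1)$, which only covers the compact subsegment $\tau_{\alpha_1,\alpha_2}([0,1])$ of $\ell_\alpha$ and so fails for generic regular elliptic $F$.

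What the paper does instead is use a \emph{second} tangent line: it picks two distinct eigenvalues $\alpha,\beta$ of $F$, writes $\trace F$ on both $\ell_\alpha$ and $\ell_\beta$, and produces four isometries $R_1,\widetilde R_1,R_2,\widetilde R_2$ of trace $\trace F$; the first pair omits only the class with negative-type eigenvalue $\alpha$, the second only the one with negative-type eigenvalue $\beta$, so together they exhaust all three classes. This also requires a technical step you omit: to apply the simultaneous change of signs one needs the lines $\mathrm L(p_1,p_2)$ and $\mathrm L(q_1,q_2)$ to be hyperbolic, i.e. the parameters $s,t$ with $\tau_{\alpha_1,\alpha_2}(s)=\tau_{\beta_1,\beta_2}(t)=\trace F$ to lie outside $[0,1]$, which is arranged by replacing $\alpha_i$ (resp.\ $\beta_i$) by $-\alpha_i$ (resp.\ $-\beta_i$) if necessary, since $\tau_{-\alpha_1,-\alpha_2}$ parametrizes $\ell_\alpha$ in the opposite direction. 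Your negative direction is fine and matches the paper, which simply cites the regularity of $R_{\alpha_2}^{p_2}R_{\alpha_1}^{p_1}$ for $p_1\neq p_2$ from \cite[Lemma~4.2]{spell} rather than redoing the eigenspace computation.
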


\begin{proof}
Since $IR_{\alpha}^p I^{-1}=R_\alpha^{Ip}$ for any
isometry $I\in\SU(2,1)$, in order to prove that a given isometry $F\in\SU(2,1)$
admits an $(\alpha_1,\alpha_2)$-decomposition, it suffices to prove that $F$ 
is in the same $\SU(2,1)$-conjugacy class as an isometry the form 
$R_{\alpha_2}^{p_2}R_{\alpha_1}^{p_1}$, for some
$p_1,p_2\in\PV\setminus\SV$ and $\alpha_1,\alpha_2\in\mathbb S^1\setminus\Omega$.

If $F=R_\beta^q$, we just take $\alpha_1,\alpha_2\in\mathbb S^1\setminus\Omega$
with $\alpha_1\alpha_2=\beta$ and we have $F=R_{\alpha_2}^q R_{\alpha_1}^q$. 

Assume that $F$ is loxodromic. Let $\alpha\in\mathbb C$ be the unit norm eigenvalue
of $F$. Then, by Proposition~\ref{prop:traceeigen}, $\trace F\in\ell_\alpha$.
If $\alpha_1,\alpha_2\in\mathbb S^1\setminus\Omega$ are such that $\alpha_1\alpha_2=
\alpha$, then $\ell_\alpha=\ell_{\alpha_1\alpha_2}$ and, by Lemma~\ref{lemma:tracetau}, 
$\trace F=\trace R_{\alpha_2}^{p_2}R_{\alpha_1}^{p_1}$ for
some $p_1,p_2\in\PV\setminus\SV$.
Since two loxodromic isometries in $\SU(2,1)$ with 
the same trace are conjugated (see Section~\ref{sec:conjclasses}), the result follows.

%Suppose that $F$ is regular elliptic. Since there is a single $\PU(2,1)$-conjugacy
%class corresponding to the trace $0=1+\omega+\omega^2$, if $\trace F=0$ the result 
%follows from an argument analogous to the one in the previous paragraph. 
Now suppose that $F$ is regular elliptic; let 
$\alpha$ and $\beta$ be two distinct eigenvalues of $F$ with 
$\alpha,\beta\in\mathbb S^1\setminus\Omega$. Let $\alpha_i,\beta_i
\in\mathbb S^1\setminus(\Omega\cup-\Omega)$ be parameters such that 
$\alpha_1\alpha_2=\alpha$,
$\beta_1\beta_2=\beta$ (here, $-\Omega:=\{-1,-\omega,-\omega^2\}$). 
Then, by Lemma~\ref{lemma:tracetau}, there exist $s,t\in\mathbb R$ satisfying 
$\tau_{\alpha_1,\alpha_2}(s)=\tau_{\beta_1,\beta_2}(t)=\trace F$. Taking
$-\alpha_i$ (resp., $-\beta_i$) in place of $\alpha_i$ (resp., $\beta_i$) if necessary,
we can assume that $s,t\not\in[0,1]$; this follows from the fact that
$\tau_{\alpha_1,\alpha_2}$ and $\tau_{-\alpha_1,-\alpha_2}$ parametrize
$\ell_{\alpha}$ in opposite directions (see~\cite[Lemma~6.19]{spell}). Now, let 
$p_i,q_i\in\PV\setminus\SV$, $i=1,2$, be points with $\tance(p_1,p_2)=s$
and $\tance(q_1,q_2)=t$; it follows that $\trace R_{\alpha_2}^{p_2}
R_{\alpha_1}^{p_1}=\trace R_{\beta_2}^{q_2}R_{\beta_1}^{q_1}$.
Denote by $\tilde p_i$ the point in $L_1:=\mathrm L(p_1,p_2)$ orthogonal to $p_i$,
and by $\tilde q_i$ the point in $L_2:=\mathrm L(q_1,q_2)$ orthogonal to $q_i$. 
Note that since $s,t\not\in[0,1]$, the lines $L_1$ and $L_2$ are hyperbolic, thus
$\sigma\tilde p_i=-\sigma p_i$ and $\sigma\tilde q_i=-\sigma q_i$. 
By Remark~\ref{rmk:rtilde}, $R_1:=R_{\alpha_2}^{p_2}R_{\alpha_1}^{p_1}$ and 
$\widetilde R_1:=R_{\alpha_2}^{\tilde p_2}R_{\alpha_1}^{\tilde p_1}$ are isometries 
of same trace (equal to $\trace F$) but lying in distinct $\SU(2,1)$-conjugacy classes;
the same is true for the isometries $R_2:=R_{\beta_2}^{q_2}R_{\beta_1}^{q_1}$ and 
$\widetilde R_2:=R_{\beta_2}^{\tilde q_2}R_{\beta_1}^{\tilde q_1}$. Therefore,
since we have three $\SU(2,1)$-conjugacy classes for each trace, 
one of the isometries $R_1,\widetilde R_1,R_2,\widetilde R_2$ must lie in the
$\SU(2,1)$-conjugacy class of $F$.

Finally, the case where $F$ is ellipto-parabolic or $3$-step unipotent follows
from Lemma~\ref{lemma:euclideanline}: if $\alpha$ and $\alpha^{-2}$ are the 
eigenvalues of $F$ we just need to take parameters $\alpha_1,\alpha_2\in
\mathbb S^1\setminus\Omega$ with $\alpha_1\alpha_2=\alpha$ and points 
$p_1,p_2\in\PV\setminus\SV$ that generate a Euclidean line. If follows that
$F$ is conjugated to $R_{\alpha_2}^{p_2}R_{\alpha_1}^{p_1}$. (If $F$ is 
ellipto-parabolic, proceding as above and taking $\alpha_1\alpha_2=\alpha^{-2}$, 
we obtain $p_1,p_2$ such that $\mathrm L(p_1,p_2)$ is noneuclidean.)

The second part of the proposition follows from the fact that, if $p_1\neq p_2$
the isometry $R_{\alpha_2}^{p_2}R_{\alpha_1}^{p_1}$ is regular (see 
Definition~\ref{defi:regular}, Lemma~\ref{lemma:euclideanline}, 
and~\cite[Lemma~4.2]{spell}).
\end{proof}

\subsection{Decompositions with fixed parameters}
\label{subsec:fixedpar}

Here we are interested in the case where the parameters $\alpha_1,\alpha_2$ are given.
We start determining which regular parabolic
or loxodromic isometries admit an $(\alpha_1,\alpha_2)$-decomposition. 

\begin{prop}
\label{prop:a1a2loxpardecomp}
Let\/ $\alpha_1,\alpha_2\in\mathbb S^1\setminus\Omega$ be parameters. 
The following statements hold:

$\bullet$ a loxodromic isometry admits an\/ $(\alpha_1,\alpha_2)$-decomposition iff
it can be lifted to an isometry\/ $F\in\SU(2,1)$ with\/ $\trace F\in
\ell_{\alpha_1\alpha_2}$ {\rm(}see\/ {\rm Subsection~\ref{subsec:tangentlines}}{\rm)};

$\bullet$ a regular parabolic isometry\/ {\rm(Definition~\ref{defi:regular})} 
admits an\/ $(\alpha_1,\alpha_2)$-decomposition iff it can be lifted to an isometry\/ 
$F\in\SU(2,1)$ with\/ $\trace F\in\ell_{\alpha_1\alpha_2}$ and\/ 
$\trace F\neq\tau_{\alpha_1,\alpha_2}(0)$.
\end{prop}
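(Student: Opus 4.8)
\textbf{Proof plan for Proposition~\ref{prop:a1a2loxpardecomp}.}

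The plan is to prove both bullets by establishing the two implications separately, using Lemma~\ref{lemma:tracetau} and Proposition~\ref{prop:traceeigen} for the trace computation and then carefully tracking which of the several $\SU(2,1)$-conjugacy classes with a given trace the product $R_{\alpha_2}^{p_2}R_{\alpha_1}^{p_1}$ actually lands in. First I would handle the forward (``only if'') direction, which is easy: if a loxodromic or regular parabolic isometry admits an $(\alpha_1,\alpha_2)$-decomposition, then it has a lift of the form $R_{\alpha_2}^{p_2}R_{\alpha_1}^{p_1}$, and by Lemma~\ref{lemma:tracetau} the trace of this lift equals $\tau_{\alpha_1,\alpha_2}(t)$ for $t=\tance(p_1,p_2)$, hence lies on $\ell_{\alpha_1\alpha_2}$. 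For the parabolic case I must additionally rule out $\trace F=\tau_{\alpha_1,\alpha_2}(0)$: by Lemma~\ref{lemma:walls} and the analysis around~\eqref{eq:traceparam}, $t=0$ corresponds to a spherical line $\mathrm L(p_1,p_2)$ (since $\tance=0<1$), and I would argue that when $\mathrm L(p_1,p_2)$ is spherical and $\trace R\in\partial\Delta$, the product $R_{\alpha_2}^{p_2}R_{\alpha_1}^{p_1}$ cannot be regular parabolic --- it must be special elliptic (the case $\tance<1$ gives a spherical line, whose polar point is negative, and the product fixes this negative point, so it is elliptic). This uses that a parabolic isometry has no negative fixed point.

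Next I would prove the reverse (``if'') direction. Suppose the loxodromic (resp. regular parabolic) isometry $F$ lifts so that $\trace F\in\ell_{\alpha_1\alpha_2}$ (resp. additionally $\trace F\neq\tau_{\alpha_1,\alpha_2}(0)$). By Lemma~\ref{lemma:tracetau}, since $\tau_{\alpha_1,\alpha_2}(\mathbb R)=\ell_{\alpha_1\alpha_2}$, there is $t\in\mathbb R$ with $\tau_{\alpha_1,\alpha_2}(t)=\trace F$; choose $p_1,p_2\in\PV\setminus\SV$ with $\tance(p_1,p_2)=t$, so that $R:=R_{\alpha_2}^{p_2}R_{\alpha_1}^{p_1}$ has $\trace R=\trace F$. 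For the loxodromic case, two loxodromic isometries in $\SU(2,1)$ with the same trace $\tau\in\mathbb C\setminus\Delta$ are $\SU(2,1)$-conjugate (recalled in Section~\ref{sec:conjclasses}); since $\trace F\notin\Delta$ forces $R$ to be loxodromic as well, $R$ is conjugate to $F$, and by Proposition~\ref{prop:decomp}({\it i}\/) the isometry $F$ itself admits an $(\alpha_1,\alpha_2)$-decomposition. For the regular parabolic case, $\trace F\in\partial\Delta\setminus\{3,3\omega,3\omega^2\}$ (as $F$ is parabolic but not unipotent --- a regular parabolic isometry is ellipto-parabolic or $3$-step unipotent; I would treat the $3$-step unipotent subcase via Lemma~\ref{lemma:euclideanline} using the Euclidean-line parametrization, i.e. $t=1$). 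By Section~\ref{sec:conjclasses} there are exactly three $\SU(2,1)$-classes with trace $\trace F$: two special elliptic and one ellipto-parabolic. Since $t=\tance(p_1,p_2)\neq 0$ (using $\trace F\neq\tau_{\alpha_1,\alpha_2}(0)$) and in fact the relevant values of $t$ avoid the spherical range, I would show $\mathrm L(p_1,p_2)$ is hyperbolic or Euclidean, so $R$ fixes no point of $\BV$, hence $R$ is not elliptic; therefore $R$ is the ellipto-parabolic class, which is the class of $F$. Then Proposition~\ref{prop:decomp}({\it i}\/) finishes.

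The main obstacle I anticipate is the bookkeeping in the parabolic case: pinning down, for each $t\in\mathbb R$ with $\tau_{\alpha_1,\alpha_2}(t)\in\partial\Delta$, exactly which of the three conjugacy classes $R_{\alpha_2}^{p_2}R_{\alpha_1}^{p_1}$ represents --- and in particular showing that the \emph{only} missed class is the special elliptic one occurring precisely at $t=0$. Here the key point is the signature of the line $\mathrm L(p_1,p_2)$: by Sylvester's criterion from Section~\ref{sec:hypgeo}, $\mathrm L(p_1,p_2)$ is spherical exactly when $0<t<1$, and the boundary-trace condition $\tau_{\alpha_1,\alpha_2}(t)\in\partial\Delta$ together with $t\in(0,1)$ forces (generically) the single value $t=0$ at the tangency point $\tau_{\alpha_1,\alpha_2}(1)$'s complementary behavior --- I would need to check that $\tau_{\alpha_1,\alpha_2}(t)\in\partial\Delta$ with $t\in[0,1)$ can only happen at $t=0$, using that $\ell_{\alpha_1\alpha_2}$ is tangent to $\partial\Delta$ at $\tau_{\alpha_1,\alpha_2}(1)$ and meets $\partial\Delta$ only there and possibly at one other point, which must then be $\tau_{\alpha_1,\alpha_2}(0)$. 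This last fact about the intersection pattern of a tangent line with the deltoid is what makes the exceptional value $\tau_{\alpha_1,\alpha_2}(0)$ canonical, and verifying it cleanly (perhaps by a direct substitution into $f$, or by invoking the description of $\alttrace^{-1}(\ell_\alpha)$ in Corollary~\ref{cor:imageella}) is the crux of the argument.
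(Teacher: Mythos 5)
Your overall strategy coincides with the paper's: the forward direction via Lemma~\ref{lemma:tracetau} plus the observation that orthogonal centers ($t=0$) force an elliptic product, and the reverse direction by choosing $p_1,p_2$ with $\tance(p_1,p_2)=t_0$ where $\tau_{\alpha_1,\alpha_2}(t_0)=\trace F$ and then identifying which of the finitely many conjugacy classes with that trace the product lands in. The loxodromic bullet is handled identically. The gap is in the key step of the parabolic reverse direction. You argue: ``$\mathrm L(p_1,p_2)$ is hyperbolic or Euclidean, so $R$ fixes no point of $\BV$, hence $R$ is not elliptic.'' That implication is false. When $\mathrm L(p_1,p_2)$ is hyperbolic, $R$ stabilizes the complex geodesic $\mathrm L(p_1,p_2)\cap\BV$ and restricts to it as a composition of two rotations of the Poincar\'e disk, which very often has a fixed point there; indeed the entire segment $\mathsf E_{\alpha_1,\alpha_2}^{--}$ of Proposition~\ref{prop:a1a2eldecomp} consists of elliptic classes realized by centers spanning a hyperbolic line (e.g.\ two negative centers, for which $\tance>1$). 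So the signature of the line does not by itself exclude ellipticity, and your chain of inferences collapses at exactly the point where the conclusion is needed.

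What actually closes the argument (and what the paper uses) is that $R:=R_{\alpha_2}^{p_2}R_{\alpha_1}^{p_1}$ cannot be \emph{special} elliptic unless $t_0\in\{0,1\}$: since $\trace R\in\partial\Delta$, ellipticity of $R$ would force it to be special elliptic, i.e.\ to have a two-dimensional eigenspace $W$; intersecting $W$ with $p_1^\perp$ and using that the eigenspaces of $R_{\alpha_2}^{p_2}$ are exactly $\mathbb Cp_2$ and $p_2^\perp$, one is driven to $p_1\perp p_2$ (so $t_0=0$), or $p_1=p_2$, or $\trace R=\tau_{\alpha_1,\alpha_2}(1)$ (so $t_0=1$ by injectivity of $\tau_{\alpha_1,\alpha_2}$). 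This is the content the paper imports from \cite{spell} via the phrase ``$R$ is not special elliptic, since $t_0\neq 0,1$''; some such argument must replace your line-signature step. Two smaller points: at $t_0=1$ you must take \emph{distinct} points spanning a Euclidean line (Lemma~\ref{lemma:euclideanline}) not only for the $3$-step unipotent subcase but also for the ellipto-parabolic class at the tangency point $\tau_{\alpha_1,\alpha_2}(1)$ when $\alpha_1\alpha_2\notin\Omega$, since $\tance=1$ with a non-Euclidean line forces $p_1=p_2$; and in the forward direction your claim that $t=0$ gives a spherical line is inaccurate when a center is negative (the line is then hyperbolic), though the product is still elliptic because it fixes three pairwise orthogonal points, one of which is negative.
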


\begin{proof}
First, note that, by Proposition~\ref{prop:traceeigen}, if an isometry admits 
an $(\alpha_1,\alpha_2)$-decomposition, then it lifts to $\SU(2,1)$ to an isometry
with trace lying in $\ell_{\alpha_1\alpha_2}$. Moreover, if $p_1,p_2\in\PV\setminus\SV$ 
are orthogonal points (i.e., $\tance(p_1,p_2)=0$), then 
$\delta R_{\alpha_2}^{p_2}R_{\alpha_1}^{p_1}$ is an elliptic isometry, for 
any~$\delta\in\Omega$.

Suppose that $F\in\SU(2,1)$ is loxodromic and $\trace F\in\ell_{\alpha_1\alpha_2}$.
If $t_0\in\mathbb R$ is such that $\trace F=\tau_{\alpha_1,\alpha_2}(t_0)$, and 
$p_1,p_2\in\PV\setminus\SV$ satisfy $\tance(p_1,p_2)=t_0$, then 
$\trace F=\trace R_{\alpha_2}^{p_2}R_{\alpha_1}^{p_1}$ and, since two loxodromic 
isometries with the same trace are $\SU(2,1)$-conjugated, the result follows.

Now, suppose that $F\in\SU(2,1)$ is regular parabolic
with $\trace F\in\ell_{\alpha_1\alpha_2}$ and $\trace F\neq\tau_{\alpha_1,\alpha_2}(0)$.
Parametrize the line $\ell_{\alpha_1\alpha_2}$ by $\tau_{\alpha_1,\alpha_2}(t)$ and let
$t_0\in\mathbb R$ be such that $\tau_{\alpha_1,\alpha_2}(t_0)=\trace F$.
By hypothesis,~$t_0\neq 0$.
If $t_0\neq 1$, then $F$ is conjugated to $R:=R_{\alpha_2}^{p_2}R_{\alpha_1}^{p_1}$,
where $p_1,p_2\in\PV\setminus\SV$ are any points with $\tance(p_1,p_2)=t_0$. 
(This follows from $\trace F=\trace R$ together with the fact that
$R$ is not special elliptic, since~$t_0\neq 0,1$.) 
If~$t_0=1$, let $p_1,p_2\in\EV$ be points such that the line $\mathrm L(p_1,p_2)$
is Euclidean and let $R$ be defined as above. Then, $\trace F=\trace R$ and, 
by Lemma~\ref{lemma:euclideanline}, the isometry $R$ is either ellipto-parabolic or
$3$-step unipotent. Therefore, $R$ and $F$ are conjugated.
\end{proof}

Now, to obtain the remaining isometries (regular or special elliptic)
admitting an $(\alpha_1,\alpha_2)$-decomposition, we determine 
$\mathsf E_{\alpha_1,\alpha_2}$ (see Notation~\ref{notation:egclasses}) and the
classes of its intersection with the boundary of $\rho(\mathcal E)$. 
We write $\mathsf E_{\alpha_1,\alpha_2}$ as the union of sets 
$\mathsf E_{\alpha_1,\alpha_2}^{\sigma_1\sigma_2}$
that are defined as follows.

\begin{defi}
\label{defi:esigma}
We denote by $\mathsf E_{\alpha_1,\alpha_2}^{\sigma_1\sigma_2}$ the set
composed by (the projection on $\rho(\mathcal E)$ of) classes of elliptic or parabolic
isometries admitting a decomposition of the form $F=R_{\alpha_2}^{p_2}R_{\alpha_1}^{p_1}$, 
with $\sigma p_i=\sigma_i$, $i=1,2$.
(To simplify the notation, the signs $\sigma_1,\sigma_2$ are taken as one of the 
symbols $-,+$ instead of values in~$\{-1,1\}$.)
\end{defi}

\begin{prop}
\label{prop:a1a2eldecomp}
Let\/ $\alpha_1=e^{a_1i}$ and\/ $\alpha_2=e^{a_2i}$ be parameters with\/
$0<a_j<2\pi/3$, $j=1,2$. Then\/~$\mathsf E_{\alpha_1,\alpha_2}$ is given by the union
of the sets\/ $\mathsf E_{\alpha_1,\alpha_2}^{\sigma_1\sigma_2}$ where:

\smallskip

$\bullet$ $\mathsf E_{\alpha_1,\alpha_2}^{--}$ is a single line segment\/ 
{\rm(}possibly, a single point\/{\rm)} given either by\/ 
$\big[(3a_1+3a_2,3a_1+3a_2),\big(2\pi,\pi+\frac{3(a_1+a_2)}{2}\big)\big]$, 
if\/ $0<a_1+a_2\leq 2\pi/3$, or by\/
$\big[(3(a_1+a_2)-2\pi,3(a_1+a_2)-2\pi),\big(\frac{3(a_1+a_2)}{2}-\pi,0\big)\big]$, 
if\/~$a_1+a_2>2\pi/3$.
If\/ $\mathsf E_{\alpha_1,\alpha_2}^{--}$ is a single point, then it is the class of the
identical isometry. Otherwise, the vertex of\/ $\mathsf E_{\alpha_1,\alpha_2}^{--}$ 
in the diagonal side of\/ $\rho(\mathcal E)$, corresponds to a special elliptic isometry 
with negative center, and the vertex of\/ $\mathsf E_{\alpha_1,\alpha_2}^{--}$ lying
in the nondiagonal side of\/ $\rho(\mathcal E)$ corresponds to an ellipto-parabolic isometry.

\smallskip

$\bullet$ $\mathsf E_{\alpha_1,\alpha_2}^{++}$ is the union of two line segments with a 
common vertex, one being of slope\/ $-1$ and the other of slope $\frac 12$ or $2$
{\rm(}possibly, a single point{\rm)}, given~by 
$$\big[(2\pi-3a_1,2\pi-3a_2),\big(2\pi,2\pi-3(a_1+a_2)\big)\big]\cup
\Big[(2\pi,2\pi-3(a_1+3a_2)),\big( \pi+\tfrac{3(a_1+a_2)}{2},0\big)\Big],$$
if\/ $a_1\leq a_2$ and\/ $0<a_1+a_2\leq 2\pi/3$; by
$$\big[(2\pi-3a_2,2\pi-3a_1),(2\pi,2\pi-3(a_1+a_2))\big]\cup
\Big[(2\pi,2\pi-3(a_1+a_2)),\big(\pi+\tfrac{3(a_1+a_2)}{2},0\big)\Big],$$
if\/ $a_1\geq a_2$ and\/ $0<a_1+a_2\leq 2\pi/3$; by
$$\Big[\big(2\pi,\tfrac{3(a_1+a_2)}{2}-\pi\big),(4\pi-3(a_1+a_2),0)\Big]\cup
\big[(4\pi-3(a_1+a_2),0),(2\pi-3a_1,2\pi-3a_2)\big],$$
if\/ $a_1\leq a_2$ and\/ $a_1+a_2> 2\pi/3$; or by
$$\Big[\big(2\pi,\tfrac{3(a_1+a_2)}{2}-\pi\big),(4\pi-3(a_1+a_2),0)\Big]\cup
\big[(4\pi-3(a_1+a_2),0),(2\pi-3a_2,2\pi-3a_1)\big],$$
if\/ $a_1\geq a_2$ and\/ $a_1+a_2>2\pi/3$. If the second segment is a single point,
it corresponds to both the identical class and the\/ $3$-step unipotent class.
Otherwise, the common vertex of the segments
corresponds to both a class of a special elliptic isometry with positive center
and an ellipto-parabolic class; the other vertex of the segment of 
slope\/ $-1$ corresponds to a\/ {\rm (}possibly special\/{\rm)} elliptic isometry;
the remaining vertex corresponds to an ellipto-parabolic isometry.

\smallskip

$\bullet$ $\mathsf E_{\alpha_1,\alpha_2}^{+-}$ is a single line segment\/
{\rm (}possibly, a point\/{\rm )} given by\/
$\big[(3a_2,3a_2-3a_1),\big(\frac{3(a_1+a_2)}{2},0\big)\big]$,
if\/ $a_1\leq a_2$; or by\/
$\big[(2\pi+3a_2-3a_1,3a_2),\big(2\pi,\tfrac{3(a_1+a_2)}{2}\big)\big]$,
if\/ $a_1\geq a_2$. Furthermore, $\mathsf E_{\alpha_1,\alpha_2}^{+-}$ is a single point
corresponding to the class of a special elliptic isometry with positive center
iff\/ $\alpha_1=\alpha_2$; otherwise it has a vertex in\/ $\mathcal E^{\text{reg}}$
and the other vertex correspond to an ellipto-parabolic class.

\smallskip

$\bullet$ $\mathsf E_{\alpha_1,\alpha_2}^{-+}$ is a single line segment\/
{\rm (}possibly, a point\/{\rm )} given by\/
$\big[(2\pi+3a_1-3a_2,3a_1),\big(2\pi,\frac{3(a_1+a_2)}{2}\big)\big]$,
if\/ $a_1\leq a_2$, or by\/
$\big[(3a_1,3a_1-3a_2),\big(\frac{3(a_1+a_2)}{2},0\big)\big]$,
if\/ $a_1\geq a_2$. Furthermore, $\mathsf E_{\alpha_1,\alpha_2}^{-+}$ is a single point
corresponding to the class of a special elliptic isometry with positive center
iff\/ $\alpha_1=\alpha_2$; otherwise it has a vertex in\/ $\mathcal E^{\text{reg}}$
and the other vertex correspond to an ellipto-parabolic class. 
\end{prop}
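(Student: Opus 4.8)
The plan is to study, for each fixed choice of signs $(\sigma_1,\sigma_2)$, the one–parameter family of products $R:=R_{\alpha_2}^{p_2}R_{\alpha_1}^{p_1}$ with $\sigma p_i=\sigma_i$, using $t:=\tance(p_1,p_2)$ as parameter. By Lemma~\ref{lemma:tracetau} we have $\trace R=\tau_{\alpha_1,\alpha_2}(t)\in\ell_{\alpha_1\alpha_2}$, so $R$ is elliptic or parabolic exactly when $t$ lies in the closed interval $I:=\tau_{\alpha_1,\alpha_2}^{-1}(\Delta)$, and $t=1$ is interior to $I$ since $\ell_{\alpha_1\alpha_2}$ is tangent to $\partial\Delta$ at $\tau_{\alpha_1,\alpha_2}(1)$ (Subsection~\ref{subsec:tangentlines}). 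First I would record the range of $t$ imposed by the signs, via Sylvester's criterion (Section~\ref{sec:hypgeo}): two negative centers force $\mathrm L(p_1,p_2)$ hyperbolic, hence $t\in(1,+\infty)$, together with the degenerate value $p_1=p_2$, which yields the special elliptic isometry $R_{\alpha_1\alpha_2}^{p_1}$ with negative center (or the identity when $\alpha_1\alpha_2\in\Omega$); two positive centers allow $\mathrm L(p_1,p_2)$ spherical, Euclidean or hyperbolic, hence $t\in[0,+\infty)$, plus the degenerate $p_1=p_2$ giving $R_{\alpha_1\alpha_2}^{p_1}$ with positive center; mixed signs force $\mathrm L(p_1,p_2)$ hyperbolic with $t\in(-\infty,0]$. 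Since $\alpha_1\alpha_2$ is always an eigenvalue of $R$ and, by transitivity of $\PU(2,1)$ on configurations of fixed signs and fixed tance, the class of $R$ depends only (and continuously) on $t$, each $\mathsf E^{\sigma_1\sigma_2}_{\alpha_1,\alpha_2}$ is a connected subset of the six segments produced in Lemma~\ref{lemma:walls}; hence it suffices to locate its endpoints and a few interior points.

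Next I would pin down the class of $R$ as a function of $t$. The polar point $v$ of $\mathrm L(p_1,p_2)$ satisfies $\langle v,p_1\rangle=\langle v,p_2\rangle=0$, so by~\eqref{eq:specialelliptic} it is an eigenvector of $R$ with eigenvalue $\alpha_1\alpha_2$, and $v$ is negative, positive or isotropic according as $\mathrm L(p_1,p_2)$ is spherical ($0<t<1$), hyperbolic ($t<0$ or $t>1$), or Euclidean ($t=1$, the parabolic case already handled by Lemma~\ref{lemma:euclideanline}). The two remaining eigenvectors lie in $\mathrm L(p_1,p_2)$, so its signature fixes their types; in the hyperbolic case exactly one of the two corresponding eigenvalues is of negative type, and Remark~\ref{rmk:rtilde} decides which: replacing $(p_1,p_2)$ by the orthogonal points $(\tilde p_1,\tilde p_2)$ of $\mathrm L(p_1,p_2)$ swaps the signs of the centers and swaps these two eigenvalues while keeping the trace and keeping $\alpha_1\alpha_2$ of positive type. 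Thus the $(-,-)$ family and the $(+,+)$ family with $t>1$ realize the two distinct classes with trace $\tau_{\alpha_1,\alpha_2}(t)$ on which $\alpha_1\alpha_2$ is of positive type, whereas the spherical $(+,+)$ family ($t\in[0,1)$) realizes the class on which $\alpha_1\alpha_2$ is of negative type, and the mixed-sign families single out the remaining one.

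With the negative-type eigenvalue identified, I would compute the angle pair of $R$ explicitly: writing the other two eigenvalues as $e^{si}$ and $e^{-(a+s)i}$ with $a=a_1+a_2$ and solving $\trace R=\tau_{\alpha_1,\alpha_2}(t)$ for $s=s(t)$, the matrix $E_{\theta_1,\theta_2}$ together with the identification of negative type expresses $(\theta_1,\theta_2)$ as an affine function of $s(t)$ — hence a line segment in $\mathsf T$, broken at $t=1$ where $\mathrm L(p_1,p_2)$ becomes Euclidean. Its endpoints are read off from the distinguished values of $t$: the ends of the allowed range (where $\trace R\in\partial\Delta$, giving either $R_{\alpha_1\alpha_2}^{p_1}$ special elliptic, or, when $\mathrm L(p_1,p_2)$ is hyperbolic, an ellipto-parabolic class whose repeated eigenvalue is a square root of $(\alpha_1\alpha_2)^{-1}$), the point $t=1$ (Euclidean line, ellipto-parabolic, and the bend), and $t=0$ (orthogonal centers, where a direct eigenvalue computation gives the interior point $(2\pi-3a_1,2\pi-3a_2)$, its sign-swapped variant, the point $(3a_2,3a_2-3a_1)$, etc., exactly as in the statement). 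Comparison with Corollary~\ref{cor:imageella} and Lemma~\ref{lemma:walls} then shows each piece lies on the expected line of slope $-1$, $\tfrac12$ or $2$, while the classification of the boundary endpoints is given by the description of $\rho(\mathcal B)$ in Subsection~\ref{subsec:puclasses}; the degeneration of $\mathsf E^{+-}$ and $\mathsf E^{-+}$ to a single point when $\alpha_1=\alpha_2$, and the branching of the formulas into the sub-cases $a_1+a_2\lessgtr 2\pi/3$ and $a_1\lessgtr a_2$, come from choosing the representative of the non-oriented angle pair inside the fundamental domain $\mathsf T$. The hard part is exactly this last bookkeeping: tracking simultaneously which eigenvalue is of negative type and which lift of the mod-$2\pi$, unordered angle pair lands in $\mathsf T$, as $t$ sweeps through $1$ and out to the endpoints of its range — this is what forces the $(+,+)$ and mixed-sign analyses to split into the several explicit cases stated.
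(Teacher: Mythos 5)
Your proposal is correct and follows essentially the same route as the paper: reduce each sign pattern $(\sigma_1,\sigma_2)$ to a one‑parameter family indexed by the tance $t$ (the paper realizes this via an explicit curve $\gamma(t)$ in a fixed complex line), restrict $t$ to the interval where $\tau_{\alpha_1,\alpha_2}(t)\in\Delta$, and read off the endpoints and the bend at $t=1$ from the distinguished values $t_-,0,1,t_+$. Your explicit identification of the negative‑type eigenvalue via the polar point of $\mathrm L(p_1,p_2)$ together with Remark~\ref{rmk:rtilde} is just a slightly more spelled‑out version of how the paper separates the four families, so the two arguments coincide in substance.
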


\begin{figure}[!ht]
\centering
\includegraphics{pics/abdecomp.mps}
\caption{The lift to $\mathsf T$ of the set $\mathsf E_{\alpha_1,\alpha_2}$ and 
its image under $\alttrace$ for parameters 
$\alpha_1=e^{\frac{\pi}{10}i}$ and $\alpha_2=e^{\frac{\pi}4i}$.
The lifts of $\mathsf E_{\alpha_1,\alpha_2}^{--}$, 
$\mathsf E_{\alpha_1,\alpha_2}^{++}$, $\mathsf E_{\alpha_1,\alpha_2}^{+-}$,
and $\mathsf E_{\alpha_1,\alpha_2}^{-+}$ are in blue, green, pink, and orange,
respectively (in the online version). The solid points correspond to elliptic or 
identical classes, while 
the punctured ones correspond to regular parabolic classes.}
\label{fig:abdecomptr}
\end{figure}

\begin{proof}
Denote $\beta_j:=\omega^{j-1}\alpha_1$, $j=1,2,3$.
By Proposition~\ref{prop:traceeigen}, 
$\mathsf E_{\alpha_1,\alpha_2}$ is contained in the subset of $\rho(\mathcal E)$ determined by 
$\alttrace^{-1}(\ell_{\beta_1\alpha_2}\cup\ell_{\beta_2\alpha_2}\cup
\ell_{\beta_3\alpha_2})\subset\mathsf T$.
We consider the lines $\ell_{\beta_j\alpha_2}$ parametrized by
$\tau_{\beta_j,\alpha_2}(t)$, $j=1,2,3$, (see 
Subsection~\ref{subsec:tangentlines}). Define 
$\chi_1:=\imag\big(\frac{\alpha_1}{\alpha_1^{-2}-\alpha_1})$,
$\chi_2:=\imag\big(\frac{\alpha_2}{\alpha_2^{-2}-\alpha_2})$,
and
$$t_{\pm}:=\frac{1+4\chi_1\chi_2\pm\sqrt{(1+4\chi_1^2)(1+4\chi_2^2)}}{2}.$$
By \cite[Corollary~5.8]{spell}, $\tau_{\beta_j,\alpha_2}(t_\pm)$
lie in $\partial\Delta$, and $t_-\leq 0< 1\leq t_+$. Moreover, for~$j=1,2,3$,

\smallskip

({\it A\/}) $\tau_{\beta_j,\alpha_2}(t_-)=
\omega^{j-1}\big(2e^{-\frac{a_1+a_2}{2}i}+e^{(a_1+a_2)i}\big)$;

\smallskip

({\it B\/}) $\tau_{\beta_j,\alpha_2}(0)
=\omega^{j-1}\big(e^{(a_1+a_2)i}+e^{(-2a_1+a_2)i}+e^{(a_1-2a_2)i}\big)$;

\smallskip

({\it C\/}) $\tau_{\beta_j,\alpha_2}(1)=
\omega^{j-1}\big(2e^{(a_1+a_2)i}+e^{-(2a_1+2a_2)i}\big)$;

\smallskip

({\it D\/}) $\tau_{\beta_j,\alpha_2}(t_+)=
\omega^{j-1}\big(-2e^{-\frac{a_1+a_2}{2}i}+e^{(a_1+a_2)i}\big)$.

\smallskip
 
Note that the points in ({\it C\/}) are the ones where the lines 
$\ell_{\beta_j,\alpha_2}$ are tangent to $\partial\Delta$, and that
if $\alpha_1=\alpha_2$, then $t_-=0$ and the points in ({\it A}\/) coincide
with the one in ({\it B}\/).

\smallskip

($\mathsf E_{\alpha_1,\alpha_2}^{--}$)
Let $p_1\in\BV$ and let $L$ be a hyperbolic complex line through
$p_1$. Consider a curve $\gamma:[1,t_+]\to\BV$ such that
$\gamma(t)\in L$ and  $\tance(p_1,\gamma(t))=t$ for all
$t\in[1,t_+]$; in particular $\gamma(1)=p_1$.
Note that every elliptic or regular parabolic isometry $F\in\SU(2,1)$ that admits a
decomposition $R_{\alpha_2}^{q_2}R_{\alpha_1}^{q_1}$ with
$q_1,q_2\in\BV$ is in the same $\SU(2,1)$-conjugacy class of
the isometry $R_{\alpha_2}^{\gamma(\tance(q_1,q_2))}R_{\alpha_1}^{p_1}$.
Thus, the curve $\widetilde\gamma$ in $\mathcal G$ defined by 
$\widetilde\gamma(t)=[R_{\alpha_2}^{\gamma(t)}
R_{\alpha_1}^{p_1}]$ is such that the projection on $c(\mathcal G)$ 
of its image is $\mathsf E_{\alpha_1,\alpha_2}^{--}$.
Moreover, $R_{\alpha_2}^{\gamma(1)}R_{\alpha_1}^{p_1}=R_{\alpha_1\alpha_2}^{p_1}$ 
is special elliptic with angle pair $\{3a_1+3a_2,3a_1+3a_2\}$, and 
$R_{\alpha_2}^{\gamma(t_+)}R_{\alpha_1}^{p_1}$ is parabolic with one of the numbers 
in ({\it D}\/) as trace, which implies that its angle pair is
$\{\pi+\frac{3(a_1+a_2)}2,0\}$. 
 
\smallskip

($\mathsf E_{\alpha_1,\alpha_2}^{++}$) Let $p_1\in\EV$ and consider
a spherical line $L_1$ and a hyperbolic line $L_2$ both through~$p_1$.
Consider a (continuous) curve $\gamma:[0,t_+]\to\EV$
such that $\gamma(t)\in L_1$ for every $t\in[0,1]$,
$\gamma(t)\in L_2$ for every $[1,t_+]$, and
$\tance(p_1,\gamma(t))=t$ for every $t$; in particular $\gamma(1)=p_1$.
Clearly, every elliptic isometry of the form
$R_{\alpha_2}^{q_2}R_{\alpha_1}^{q_1}$, with $q_1,q_2\in\EV$,
is $\SU(2,1)$-conjugated to the isometry $R_{\alpha_2}^{\gamma(\tance(q_1,q_2))}
R_{\alpha_1}^{p_1}$. Moreover, if $q_1,q_2\in\EV$ are such that 
the line $\mathrm L(q_1,q_2)$ is Euclidean, then $R_{\alpha_2}^{q_2}R_{\alpha_1}^{q_1}$ 
is regular parabolic with the same angle pair as $R_{\alpha_1\alpha_2}^{p_1}$, and if
$q_1,q_2\in\EV$ are distinct points such that $R_{\alpha_2}^{q_2}R_{\alpha_1}^{q_1}$
is parabolic, then such isometry is conjugated to 
$R_{\alpha_2}^{\gamma(t_+)}R_{\alpha_1}^{p_1}$. 
Therefore, the projection on $c(\mathcal G)$ of the 
image of the curve $\widetilde\gamma:[0,t_+]\to\mathcal G$, defined as in the previous case, is 
$\mathsf E_{\alpha_1,\alpha_2}^{++}$. The result follows from the fact that
$R_{\alpha_2}^{\gamma(0)}R_{\alpha_1}^{p_1}$ 
is an elliptic isometry with angle pair $\{-3a_1,-3a_2\}$; and 
$R_{\alpha_2}^{\gamma(1)}R_{\alpha_1}^{p_1}=R_{\alpha_1\alpha_2}^{p_1}$
is a special elliptic isometry with angle pair $\{-3(a_1+a_2),0\}$, 
and $R_{\alpha_2}^{\gamma(t_+)}R_{\alpha_1}^{p_1}$ is a parabolic isometry
having one of the points in $({\it D}\/)$ as trace, which implies that 
its angle pair is $\big\{\pi+\frac{3(a_1+a_2)}{2},0\big\}$.

\smallskip

($\mathsf E_{\alpha_1,\alpha_2}^{+-}$) Let $p_1\in\EV$ and let $L$ be
a hyperbolic line through $p_1$. Consider a curve $\gamma:[t_-,0]\to\BV$
such that $\gamma(t)\in L$ and $\tance(p_1,\gamma(t))=t$ for all $t\in[t_-,0]$.
In particular $\langle p_1,\gamma(0)\rangle=0$, i.e., $\gamma(0)$ is the
point in $L$ orthogonal to $p_1$. As before, this defines a curve 
$\widetilde\gamma$ in $\mathcal G$ whose image, projected on $c(\mathcal G)$,
is $\mathsf E_{\alpha_1,\alpha_2}^{+-}$. Note that
$R_{\alpha_2}^{\gamma(t_-)}R_{\alpha_1}^{p_1}$ is a parabolic 
(if $\alpha_1\neq\alpha_2$) or special elliptic (if $\alpha_1=\alpha_2$)
isometry with trace being one of the points in ({\it A}\/) (that coincides
with ({\it B}\/) if $\alpha_1=\alpha_2$), which implies that its angle
pair is $\big\{\frac{3(a_1+a_2)}2,0\big\}$.
Also, 
$R_{\alpha_2}^{\gamma(0)}R_{\alpha_1}^{p_1}$ is an elliptic isometry with
angle pair $\{3a_2,3a_2-3a_1\}$, and the result follows.

($\mathsf E_{\alpha_1,\alpha_2}^{-+}$) This case is analogous to the one above,
considering $p_1\in\BV$ and $\gamma$ as a curve in~$\EV$.  
\end{proof}

\begin{rmk}
\label{rmk:a1a2props}
The following observations are direct consequences of Proposition~\ref{prop:a1a2eldecomp}
and its proof. Figure~\ref{fig:abdecomptr} might be useful as an illustration.
\begin{enumerate}[(1)]
\item $\mathsf E_{\alpha_1,\alpha_2}^{++}$ is the only subset
of $\mathsf E_{\alpha_1,\alpha_2}$ containing a segment of slope $-1$,
and it is composed by classes of elliptic isometries that admit
a decomposition $F=R_{\alpha_2}^{p_2}R_{\alpha_1}^{p_1}$ with spherical line
$\mathrm L(p_1,p_2)$.
\item\label{item:complement}
Define $\mathsf R_{\alpha_1,\alpha_2}^{\sigma_1\sigma_2}:=
\mathsf E_{\alpha_1,\alpha_2}^{\sigma_1\sigma_2}\cap\mathcal E^{\mathrm{reg}}$ (see 
Subsection~\ref{subsec:puclasses}). There exist
$\delta_0,\delta_1,\delta_2\in\Omega$ such that 
$\alttrace(\mathsf R_{\alpha_1,\alpha_2}^{++})\subset
\ell_{\delta_0\alpha_1\alpha_2}$,
$$\big(\delta_1\alttrace(\mathsf R_{\alpha_1,\alpha_2}^{+-})\big)\cup
\alttrace(\mathsf R_{\alpha_1,\alpha_2}^{++})=\ell_{\delta_0\alpha_1\alpha_2}\cap
\Delta^\circ\quad \text{and}\quad
\big(\delta_2\alttrace(\mathsf R_{\alpha_1,\alpha_2}^{-+})\big)\cup
\alttrace(\mathsf R_{\alpha_1,\alpha_2}^{++})=\ell_{\delta_0\alpha_1\alpha_2}\cap
\Delta^\circ.$$
\item By item~({\it iii}\/) of Proposition~\ref{prop:decomp}, 
$\mathsf E_{\alpha_1,\alpha_2}=\mathsf E_{\alpha_2,\alpha_1}$. Moreover,
since 
$R_{\alpha_2}^{p_2}R_{\alpha_1}^{p_1}=R_{\alpha_1}^{R_{\alpha_2}^{p_2}p_1}
R_{\alpha_2}^{p_2}$,
for any points $p_1,p_2\in\PV\setminus\SV$,
we have $\mathsf E_{\alpha_1,\alpha_2}^{\sigma_1\sigma_2}=
\mathsf E_{\alpha_2,\alpha_1}^{\sigma_2\sigma_1}$. 
Therefore, if $\alpha_1=\alpha_2=:\alpha$, then 
$\mathsf E_{\alpha,\alpha}^{+-}=\mathsf E_{\alpha,\alpha}^{-+}$
is a single point in a nondiagonal side of
$\rho(\mathcal E)$. It follows from item~(2) above that there exists 
$\delta\in\Omega$ such that $\alttrace(\mathsf R_{\alpha,\alpha}^{++})=
\ell_{\delta\alpha^2}\cap\Delta^\circ$.
\end{enumerate}
\end{rmk}

\begin{figure}[!htb]
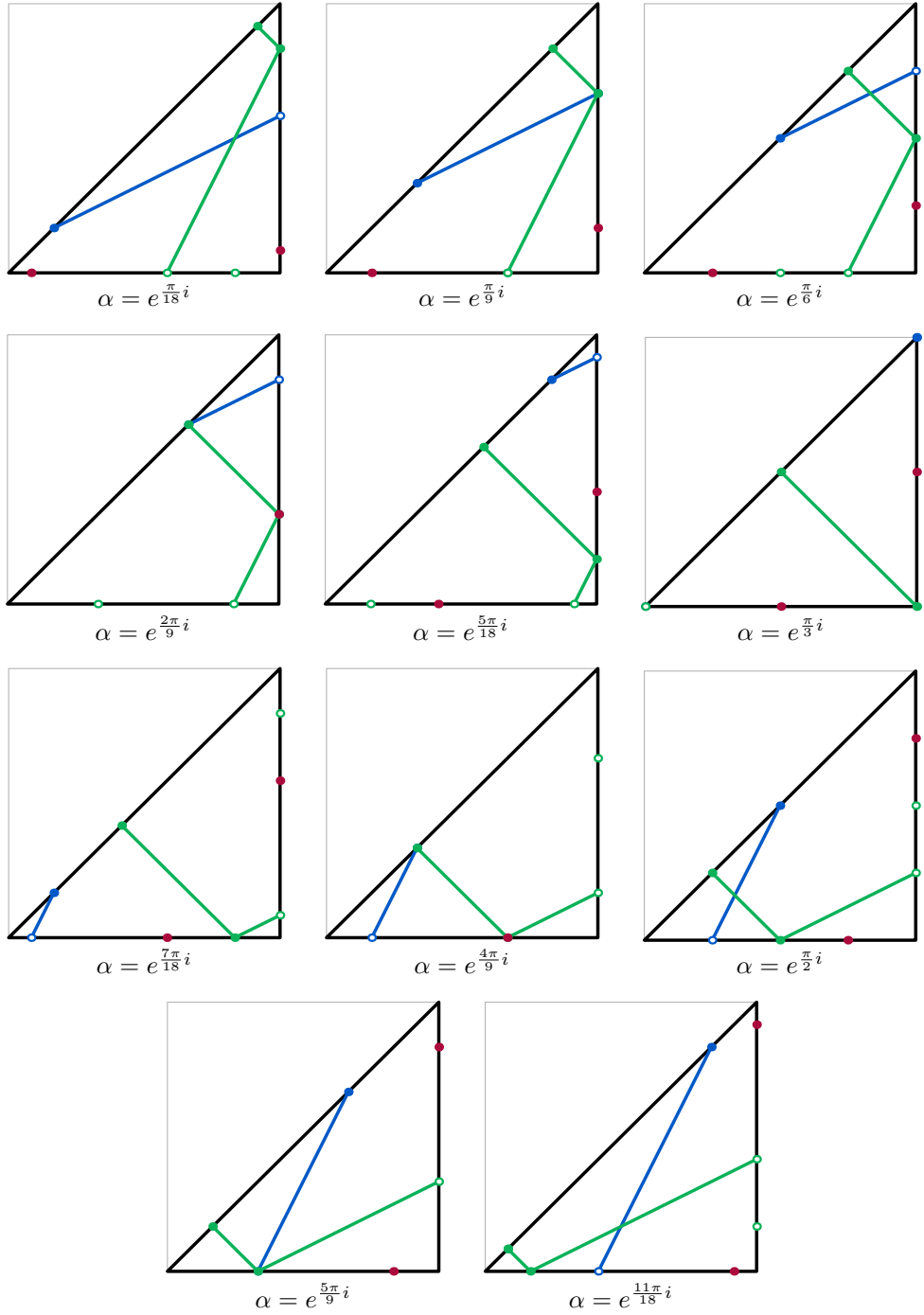

\centering
\includegraphics[scale=1]{pics/a2decomp01.mps}
\hspace{.3cm}
\includegraphics[scale=1]{pics/a2decomp02.mps}
\hspace{.3cm}
\includegraphics[scale=1]{pics/a2decomp03.mps}

\vspace{.3cm}

\includegraphics[scale=1]{pics/a2decomp04.mps}
\hspace{.3cm}
\includegraphics[scale=1]{pics/a2decomp05.mps}
\hspace{.3cm}
\includegraphics[scale=1]{pics/a2decomp06.mps}

\vspace{.3cm}

\includegraphics[scale=1]{pics/a2decomp07.mps}
\hspace{.3cm}
\includegraphics[scale=1]{pics/a2decomp08.mps}
\hspace{.3cm}
\includegraphics[scale=1]{pics/a2decomp09.mps}

\vspace{.3cm}

\includegraphics[scale=1]{pics/a2decomp10.mps}
\hspace{.3cm}
\includegraphics[scale=1]{pics/a2decomp11.mps}
\caption{The set $\mathsf E_{\alpha^{(2)}}$ for given values of $\alpha$. 
The subset $\mathsf E_{\alpha,\alpha}^{--}$ is in blue,
$\mathsf E_{\alpha,\alpha}^{++}$ is in green, and
$\mathsf E_{\alpha,\alpha}^{+-}=\mathsf E_{\alpha,\alpha}^{-+}$ is in red 
(in the online version).
Solid points correspond to elliptic classes, while punctured ones correspond 
to parabolic classes.}
\label{fig:alpha2decomp}
\end{figure}

\begin{cor}
Let\/ $\alpha:=e^{ai}$ be a parameter with\/ $0<a<2\pi/3$.
Then\/ $\mathsf E_{\alpha^{(2)}}$ is given as the union of its subsets\/
$\mathsf E_{\alpha,\alpha}^{\sigma_1\sigma_2}$ where

\smallskip

$\bullet$ $\mathsf E_{\alpha,\alpha}^{--}=\big[(6a,6a),(2\pi,\pi+3a)\big]$ if\/ 
$0<a\leq\pi/3$, or\/ $\mathsf E_{\alpha,\alpha}^{--}=
\big[(6a-2\pi,6a-2\pi),(3a-\pi,0)\big]$ if\/ $\pi/3< a<2\pi/3$;

\smallskip

$\bullet$ $\mathsf E_{\alpha,\alpha}^{++}=\big[(2\pi-3a,2\pi-3a),(2\pi,2\pi-6a)\big]
\cup\big[(2\pi,2\pi-6a),(\pi+3a,0)\big]$ if\/ $0<a\leq\pi/3$, or\/
$\mathsf E_{\alpha,\alpha}^{++}=\big[(2\pi,3a-\pi),(4\pi-6a,0)\big]\cup
\big[(4\pi-6a,0),(2\pi-3a,2\pi-3a)\big]$, if\/ $\pi/3< a<2\pi/3$;

\smallskip

$\bullet$ $\mathsf E_{\alpha,\alpha}^{+-}=\mathsf E_{\alpha,\alpha}^{-+}=
(3a,0)\simeq(2\pi,3a)$.
\end{cor}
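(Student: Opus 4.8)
The plan is to obtain the corollary directly from Proposition~\ref{prop:a1a2eldecomp} by specializing the parameters to $\alpha_1=\alpha_2=\alpha=e^{ai}$, so that $a_1=a_2=a$ and $a_1+a_2=2a$. First I would treat the $--$ case: substituting into the two alternatives of the proposition, the condition $0<a_1+a_2\leq 2\pi/3$ becomes $0<a\leq\pi/3$, yielding the segment $\big[(6a,6a),(2\pi,\pi+3a)\big]$, and the condition $a_1+a_2>2\pi/3$ becomes $\pi/3<a<2\pi/3$, yielding $\big[(6a-2\pi,6a-2\pi),(3a-\pi,0)\big]$; this is a purely mechanical substitution.

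Next I would handle the $++$ case. Since $a_1=a_2$, the hypotheses $a_1\leq a_2$ and $a_1\geq a_2$ both hold, so the two formulas for each subcase must agree (and indeed they do, since $2\pi-3a_1=2\pi-3a_2$). Substituting $a_1=a_2=a$ into the $0<a_1+a_2\leq 2\pi/3$ formula gives $\big[(2\pi-3a,2\pi-3a),(2\pi,2\pi-6a)\big]\cup\big[(2\pi,2\pi-6a),(\pi+3a,0)\big]$ for $0<a\leq\pi/3$; here I should double-check that the endpoint written as $(2\pi,2\pi-3(a_1+3a_2))$ in the proposition becomes $(2\pi,2\pi-12a)$, which represents the same point in $\mathsf T/\!\simeq$ as $(2\pi,2\pi-6a)$ after reducing modulo the identifications and using that this vertex is actually the tangency point — this is the one spot where a little care with the $\mathsf T/\!\simeq$ identification is needed rather than blind substitution. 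Substituting into the $a_1+a_2>2\pi/3$ formula gives $\big[(2\pi,3a-\pi),(4\pi-6a,0)\big]\cup\big[(4\pi-6a,0),(2\pi-3a,2\pi-3a)\big]$ for $\pi/3<a<2\pi/3$.

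For the $+-$ and $-+$ cases, the key observation is the one recorded in Remark~\ref{rmk:a1a2props}(3): since $\alpha_1=\alpha_2=\alpha$, we have $\mathsf E_{\alpha,\alpha}^{+-}=\mathsf E_{\alpha,\alpha}^{-+}$, and Proposition~\ref{prop:a1a2eldecomp} tells us this set is a single point (the case $\alpha_1=\alpha_2$) corresponding to a special elliptic isometry with positive center. Substituting $a_1=a_2=a$ into, say, the $a_1\leq a_2$ formula $\big[(3a_2,3a_2-3a_1),(\frac{3(a_1+a_2)}{2},0)\big]$ gives $\big[(3a,0),(3a,0)\big]$, i.e.\ the point $(3a,0)$, which under the identification $(\theta,0)\simeq(2\pi,\theta)$ of $\mathsf T/\!\simeq$ is $(3a,0)\simeq(2\pi,3a)$; this lies on the nondiagonal side, confirming it is the class of a special elliptic isometry with positive center and angle pair $\{-3a,0\}$, consistent with $R_\alpha^pR_\alpha^p$ being undefined for $p_1=p_2$ but the limiting orthogonal configuration giving $R_{\alpha^2}$-type data — more precisely one reads off that the decomposition must have $\tance(p_1,p_2)=0$, forcing the product to be the special elliptic isometry $R_{\alpha}^{p_1}$ up to the relevant bookkeeping, whose angle pair matches.

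The only genuine obstacle, and it is a minor one, is bookkeeping with the quotient $\mathsf T/\!\simeq$ and the ordering convention $0\leq\theta_2\leq\theta_1\leq 2\pi$: several of the substituted endpoints land on the boundary of $\mathsf T$ or need to be rewritten via $(\theta,0)\simeq(2\pi,\theta)$ to match the clean form stated in the corollary, and one must verify that the ``possibly a single point'' degeneracies in Proposition~\ref{prop:a1a2eldecomp} do not occur in the stated ranges (e.g.\ that for $0<a\leq\pi/3$ the $--$ segment is nondegenerate unless $a=0$, excluded). Beyond that, the proof is just specialization, so I would keep it to a few lines: invoke Proposition~\ref{prop:a1a2eldecomp} with $a_1=a_2=a$, note that the $a_1\leq a_2$ and $a_1\geq a_2$ branches coincide, rewrite the resulting endpoints using the identification on $\mathsf T/\!\simeq$, and invoke Remark~\ref{rmk:a1a2props}(3) for the $+-$ and $-+$ equality.
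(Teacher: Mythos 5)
Your proof is correct and is exactly the paper's (implicit) argument: the corollary is stated without proof as the direct specialization $\alpha_1=\alpha_2=\alpha$ of Proposition~\ref{prop:a1a2eldecomp}, with the $+-$/$-+$ identification coming from Remark~\ref{rmk:a1a2props}. One caveat: your justification for the endpoint $(2\pi,2\pi-3(a_1+3a_2))$ is wrong --- $(2\pi,2\pi-12a)$ and $(2\pi,2\pi-6a)$ are \emph{not} the same point of $\mathsf T/\!\simeq$ (the identification only glues $(\theta,0)$ to $(2\pi,\theta)$); the correct resolution is that $(a_1+3a_2)$ is a typo for $(a_1+a_2)$, as one sees by comparing with the other three branches of $\mathsf E_{\alpha_1,\alpha_2}^{++}$, with the stated fact that the two segments share a common vertex, and with the proof of the proposition, where that vertex is $[R_{\alpha_1\alpha_2}^{p_1}]$ with angle pair $\{-3(a_1+a_2),0\}$. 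With that correction the substitution gives exactly the stated formulas, so the argument stands.
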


Figure~\ref{fig:alpha2decomp} illustrates the set $\mathsf E_{\alpha^{(2)}}$
for some parameters $\alpha$. Summarizing, the space of classes with $\alpha$-length
equal to $2$ is given by the loxodromic isometries that have a lift with trace lying 
in~$\ell_{\alpha^2}$, and by the space $\mathsf E_{\alpha^{(2)}}$, considering the 
description of the classes of its intersection with the boundary of $\rho(\mathcal E)$
obtained in Proposition~\ref{prop:a1a2eldecomp}.

\section{Length 3 decomposition}

In this section we describe all isometries in $\PU(2,1)$,
that are not $2$-step unipotent,
admitting an $\alpha^{(3)}$-decomposition, i.e., those with 
$\alpha$-length equal to $3$, for a given parameter~$\alpha$. We start by proving 
that all loxodromic and regular parabolic isometries admit an 
$\alpha^{(3)}$-decomposition,
for any parameter $\alpha\in\mathbb S^1\setminus\Omega$. Then, using
the results of Section~\ref{sec:length2decomp}, we obtain that this is also true 
for special elliptic isometries with positive center.

In order to obtain the remaining isometries admitting an $\alpha^{(3)}$-decomposition, we
describe $\mathsf E_{\alpha^{(3)}}$ (see Notation~\ref{notation:egclasses}) using the
properties of the product map $\overline\mu$, that are summarized
in Subsection~\ref{subsec:productmap}. The set $\mathsf E_{\alpha^{(3)}}$ is the
union of closed chambers in $\rho(\mathcal E)$ delimited by the 
inverse image under $\alttrace$ of the tangent lines 
$\ell_{\alpha^3},\ell_{\omega\alpha^3},\ell_{\omega^2\alpha^3}$; each of these chambers is either 
full or empty (Proposition~\ref{prop:a3walls}). In Subsection~\ref{subsec:fullempty},
we decide whether each such chamber is full or empty.

\medskip

\subsection{Decomposing regular parabolic and loxodromic isometries}
Given a triple of parameters $\pmb\alpha=(\alpha_1,\alpha_2,\alpha_3)$, a triple
of signs $\pmb\sigma=(\sigma_1,\sigma_2,\sigma_3)$, where $\sigma_i\in\{-1,1\}$ 
and at most one of them is positive, and $\tau\in\mathbb C$, we say that a triple
of points $p_1,p_2,p_3\in\PV\setminus\SV$ is {\it strongly regular\/} with respect
to $\pmb\alpha,\pmb\sigma,\tau$ if: $p_1,p_2,p_3$ are pairwise distinct; $p_2$ is
neither orthogonal to $p_1$ nor to $p_3$; $p_1,p_2,p_3$ do not lie in a same
complex line; $\trace R_{\alpha_3}^{p_3}R_{\alpha_2}^{p_2}R_{\alpha_1}^{p_1}=\tau$;
and the isometry $R_{\alpha_3}^{p_3}R_{\alpha_2}^{p_2}R_{\alpha_1}^{p_1}$ is regular
(Definition~\ref{defi:regular}). We denote by $S_{\pmb\alpha,\pmb\sigma,\tau}$
the space of strongly regular triples with respect to $\pmb\alpha,\pmb\sigma,\tau$.

In the case where $\pmb\alpha:=(\alpha,\alpha,\alpha)$ for a given parameter
$\alpha\in\mathbb S^1\setminus\Omega$, considering the function
$\kappa_{\alpha}:\mathbb C\to\mathbb C$ defined by
$$\kappa_{\alpha}(\tau):=\frac{\tau-3}
{(\alpha^{-2}-\alpha)^3},$$
and denoting
$$t_1:=\tance(p_1,p_2),\quad t_2:=\tance(p_2,p_3),\quad 
t:=\real\frac{\langle p_1,p_2\rangle\langle p_2,p_3\rangle\langle p_3,p_1\rangle
}{\langle p_1,p_1\rangle\langle p_2,p_2\rangle\langle p_3,p_3\rangle},$$
we obtain that $S_{\pmb\alpha,\pmb\sigma,\tau}$
is the real semialgebraic surface in $\mathbb R^3(t_1,t_2,t)$ given
by the equation (see~\cite[Theorem~5.2]{spell})
\begin{equation}
\label{eq:surface}
t_1^2t_2+t_1t_2^2-2t_1t_2t+d_1t^2+d_2t+d_{3}=0,
\end{equation}
and by the inequalities
\begin{equation}
\label{eq:sineq}
\sigma_1\sigma_2t_1>0,\quad \sigma_1\sigma_2t_1>\sigma_1\sigma_2,\quad 
\sigma_2\sigma_3t_2>0,\quad \sigma_2\sigma_3t_2>\sigma_2\sigma_3,\quad 
\sigma_1\sigma_2\sigma_3(2\real\kappa_\alpha(\tau)+1)<0,
\end{equation}
where $d_1:=1+4\chi^2$, 
$d_2:=-4\chi(2\chi\real\kappa_\alpha(\tau)+\imag\kappa_\alpha(\tau))$,
$d_3:=(2\chi\real\kappa_\alpha(\tau)+\imag\kappa_\alpha(\tau))^2$, and
$\chi:=\imag\big(\frac{\alpha}{\alpha^{-2}-\alpha}\big)$.

Note that if $\tau\in\mathbb C$
is such that $2\real\kappa_\alpha(\tau)+1=0$, then $S_{\pmb\alpha,\pmb\sigma,\tau}$
is empty. But $\det[g_{ij}]=\sigma_1\sigma_2\sigma_3(2\real\kappa_\alpha(\tau)+1)$,
where $[g_{ij}]$ is the Gram matrix of the points $p_1,p_2,p_3$. Therefore, if 
$2\real\kappa_\alpha(\tau)+1=0$, a triple of pairwise distinct and pairwise
nonorthogonal points $p_1,p_2,p_3$ satisfying
$\trace R_{\alpha_3}^{p_3}R_{\alpha_2}^{p_2}R_{\alpha_1}^{p_1}=\tau$ is collinear, i.e.,
$p_1,p_2,p_3$ lie in the same complex line. (The space of such triples, with respect
to $\pmb\alpha,\pmb\sigma,\tau$, is also parametrized by \eqref{eq:surface} and satisfy
inequalities obtained by changing the last inequality in \eqref{eq:sineq} by
$\sigma_1\sigma_2\sigma_3(2\real\kappa_\alpha(\tau)+1)=0$.)

%\begin{rmk}
%\label{rmk:realzero}
%For any\/ $\alpha,\beta\in\mathbb S^1$ we have\/ $\real\frac{\alpha^3}{\alpha^3-1}=
%\frac 12$ and\/ $\real\frac{1-\alpha^3\beta^3}{(\alpha^3-1)(\beta^3-1)}=0$.
%\end{rmk}

\begin{lemma}
\label{lemma:ellinverse}
Given\/ $\alpha\in\mathbb S^1$ and\/ $\tau\in\mathbb C$,
we have\/ $2\real(\kappa_\alpha(\tau))+1=0$ iff\/ $\tau\in\ell_{\alpha^3}$.
\end{lemma}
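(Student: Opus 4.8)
The plan is to compute both sides explicitly and show they define the same affine-linear equation in the real and imaginary parts of $\tau$. Write $\kappa_\alpha(\tau) = \frac{\tau - 3}{(\alpha^{-2}-\alpha)^3}$; then $2\real(\kappa_\alpha(\tau)) + 1 = 0$ is the equation $\real\!\big((\tau-3)\overline{(\alpha^{-2}-\alpha)^3}\,\big) = -\tfrac12|\alpha^{-2}-\alpha|^6$, which is manifestly the equation of a line in $\tau$ (the coefficient of $\tau$ is $\overline{(\alpha^{-2}-\alpha)^3}\neq 0$ since $\alpha\notin\Omega$). So both $\{2\real\kappa_\alpha(\tau)+1=0\}$ and $\ell_{\alpha^3}$ are lines, and it suffices to check they share two points, or equivalently one point together with a direction vector.

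First I would produce the natural candidate point: take $\alpha_1=\alpha_2=\alpha_3=\alpha$ and points $p_1,p_2,p_3$ lying on a common complex line and pairwise nonorthogonal. By the discussion just before the lemma, $\det[g_{ij}] = \sigma_1\sigma_2\sigma_3(2\real\kappa_\alpha(\tau)+1)$ for the Gram matrix of $p_1,p_2,p_3$, so collinearity forces $2\real\kappa_\alpha(\tau)+1=0$ for $\tau=\trace R_\alpha^{p_3}R_\alpha^{p_2}R_\alpha^{p_1}$. On the other hand, if $p_1,p_2,p_3$ are collinear then $R_\alpha^{p_3}R_\alpha^{p_2}R_\alpha^{p_1}$ restricts to a product of three complex reflections of a rank-two Hermitian space, hence has $\alpha^3$ as an eigenvalue (the action on the polar point of the common line is multiplication by $\alpha^{-2}\cdot\alpha^{-2}\cdot\alpha^{-2}=\alpha^{-6}$... wait, more carefully: each $R_\alpha^{p_i}$ multiplies the polar direction by $\alpha$ when $p_i$ is on the line — one checks from \eqref{eq:specialelliptic} that $R_\alpha^{p_i}$ acts on $\mathrm L(p_1,p_2,p_3)^\perp$-component by $\alpha$), so $\alpha^3$ is an eigenvalue of the product. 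By Proposition~\ref{prop:traceeigen}, $\trace R_\alpha^{p_3}R_\alpha^{p_2}R_\alpha^{p_1}\in\ell_{\alpha^3}$. Varying the collinear configuration gives enough such $\tau$ to pin down the line, so $\{2\real\kappa_\alpha(\tau)+1=0\}\subseteq\ell_{\alpha^3}$, and since both are lines they coincide.

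An alternative, purely computational route avoids the geometry: parametrize $\ell_{\alpha^3}$ via $\tau_{\alpha,\alpha^2}(t)$ (using $\ell_{\alpha^3}=\ell_{\alpha\cdot\alpha^2}$ from Subsection~\ref{subsec:tangentlines}), substitute into $2\real\kappa_\alpha(\tau)+1$, and verify the result vanishes identically in $t$; since $\tau_{\alpha,\alpha^2}$ is a nonconstant affine map of $t$, its image is exactly the line where $2\real\kappa_\alpha(\tau)+1=0$. This reduces everything to a one-variable identity in $t$ with coefficients built from $\alpha\in\mathbb S^1$, checkable by expanding and using $|\alpha|=1$.

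The main obstacle is purely bookkeeping: correctly tracking the cube $(\alpha^{-2}-\alpha)^3$ and the constant $\chi = \imag\big(\frac{\alpha}{\alpha^{-2}-\alpha}\big)$ through the expansion, and confirming the constant term matches (i.e.\ that the line passes through the right point, not a parallel translate). Writing $\alpha = e^{ai}$ and simplifying $\alpha^{-2}-\alpha = -2i\sin(\tfrac{3a}{2})e^{-ai/2}$ makes $|\alpha^{-2}-\alpha|$ and the argument transparent and should make the constant-term check routine; the geometric argument via the Gram determinant sidesteps even this, at the cost of invoking the collinear-configuration analysis already set up in the text.
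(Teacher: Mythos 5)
Your proposal is correct and matches the paper's proof: the paper establishes the forward implication exactly via your first route (the Gram determinant identity forcing collinearity, then reading off the eigenvalue $\alpha^3$ at the polar point and invoking Proposition~\ref{prop:traceeigen}), and the converse exactly via your second route (substituting the parametrization $\tau_{\alpha^2,\alpha}(t)$ into $2\real\kappa_\alpha+1$ and checking it vanishes identically in $t$). Your added observation that both loci are lines in $\tau$, so that either argument alone would suffice, is a mild streamlining rather than a different method.
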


\begin{proof}
Suppose that $2\real(\kappa_\alpha(\tau))+1=0$. As discussed above,
if $p_1,p_2,p_3\in\PV\setminus\SV$ is a triple of pairwise distinct, 
pairwise nonorthogonal
points such that $\trace R_{\alpha}^{p_3}R_{\alpha_2}^{p_2}R_{\alpha_1}^{p_1}=\tau$, 
then $p_1,p_2,p_3$ lie in a same complex line $L$ and, in this case,
$R_{\alpha_3}^{p_3}R_{\alpha_2}^{p_2}R_{\alpha_1}^{p_1}$ has a fixed point
with eigenvalue $\alpha^3$, namely the polar point of the line $L$. 
By Proposition~\ref{prop:traceeigen}, $\tau\in\ell_{\alpha^3}$. 

On the other hand, given $\tau\in\ell_{\alpha^3}$, we have
$\tau=\tau_{\alpha^2,\alpha}(t)$ (see Subsection~\ref{subsec:tangentlines}), for
some~$t\in\mathbb R$, and
$$2\real\big(\kappa_{\alpha}(\tau_{\alpha^2,\alpha}(t))\big)+1=2\real
\Bigg(\frac{t(1-\alpha^6)}{(\alpha^3-1)^2}
-\frac{\alpha^3}{\alpha^3-1}\Bigg)+1.$$
But, $\real\frac{\alpha^3}{\alpha^3-1}=\frac{1}{2}$ and 
$\real(\frac{1-\alpha^6}{(\alpha^3-1)^2})=0$. In fact, by straightforward
calculations, we have
$$\frac{\alpha^3}{\alpha^3-1}+\frac{\alpha^{-3}}{\alpha^{-3}-1}=1
\quad\text{and}\quad
\frac{1-\alpha^6}{(\alpha^3-1)^2}-\frac{1-\alpha^{-6}}{(\alpha^{-3}-1)^2}=0.$$
Therefore, $2\real\big(\kappa_{\alpha}(\tau_{\alpha^2,\alpha}(t))\big)+1=0$. 
%where the last equality follows from Remark~\ref{rmk:realzero}.
\end{proof}

\begin{prop}
\label{prop:parlox3}
Every regular parabolic or loxodromic isometry in\/ $\PU(2,1)$ admits an\/ 
$\alpha^{(3)}$-decom{\-}position, for any parameter\/ 
$\alpha\in\mathbb S^1\setminus\Omega$.
\end{prop}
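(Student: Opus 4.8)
The plan is to use the characterization of loxodromic and regular parabolic isometries via their trace together with the strongly-regular-triple machinery: it suffices to show that for every relevant $\tau$ there is a triple of signs $\pmb\sigma$ making $S_{(\alpha,\alpha,\alpha),\pmb\sigma,\tau}$ nonempty, since a point in that space gives an $\alpha^{(3)}$-decomposition of an isometry of trace $\tau$ which (being loxodromic, or regular parabolic) is then $\SU(2,1)$-conjugate to the target isometry — and conjugation preserves $\alpha^{(3)}$-decomposability by Proposition~\ref{prop:decomp}(i). The key subtlety is that $F$ itself need not be \emph{regular} while the definition of $S_{\pmb\alpha,\pmb\sigma,\tau}$ demands the product be regular; but a regular parabolic isometry is by definition regular, and a loxodromic isometry is always regular, so this causes no problem for the cases at hand.

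First I would fix $\tau\in\mathbb C\setminus\Delta$ (loxodromic case) or $\tau\in\partial\Delta\setminus\{3,3\omega,3\omega^2\}$ with $\tau\neq\tau_{\alpha^2,\alpha}(0)$-type exclusions handled separately (regular parabolic case). By Lemma~\ref{lemma:ellinverse}, $2\real\kappa_\alpha(\tau)+1=0$ exactly when $\tau\in\ell_{\alpha^3}$. \textbf{Case A: $\tau\notin\ell_{\alpha^3}$.} Then $2\real\kappa_\alpha(\tau)+1\neq 0$, so we may choose the triple of signs $\pmb\sigma=(\sigma_1,\sigma_2,\sigma_3)$ (with at most one $\sigma_i$ positive) so that the last inequality in~\eqref{eq:sineq}, namely $\sigma_1\sigma_2\sigma_3(2\real\kappa_\alpha(\tau)+1)<0$, holds — this pins down $\sigma_1\sigma_2\sigma_3$. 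It remains to exhibit a real point $(t_1,t_2,t)$ on the surface~\eqref{eq:surface} satisfying the first four inequalities in~\eqref{eq:sineq}. Here I would argue geometrically/by a degrees-of-freedom count: \eqref{eq:surface} is (for fixed $t$) quadratic in $t_1t_2$ and linear in $t_1^2t_2+t_1t_2^2$, so for suitable choices of $t$ one can solve for $(t_1,t_2)$ in the prescribed open quadrant (the sign conditions $\sigma_1\sigma_2 t_1>0$, $\sigma_1\sigma_2 t_1>\sigma_1\sigma_2$, etc., simply say $t_1,t_2$ lie outside $[0,1]$ on the appropriate side according to the signs), using that as $t\to\pm\infty$ the surface is dominated by $d_1 t^2$ which forces $t_1t_2\approx d_1 t/2\to\pm\infty$ with $t_1,t_2$ large of the same sign. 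Choosing $t$ large of the correct sign then produces $t_1,t_2$ both large and, say, $>1$, which is one of the admissible sign patterns; the remaining sign patterns are reached by the corresponding choice of $\pmb\sigma$.

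\textbf{Case B: $\tau\in\ell_{\alpha^3}$.} Now $\det[g_{ij}]=0$ for the relevant triples, so genuinely strongly regular triples do not exist; instead, as noted in the paragraph after~\eqref{eq:sineq}, there are \emph{collinear} triples $p_1,p_2,p_3$ in a common complex line $L$ with $\trace R_\alpha^{p_3}R_\alpha^{p_2}R_\alpha^{p_1}=\tau$, parametrized by~\eqref{eq:surface} with the last inequality replaced by an equality. Such a product fixes the polar point of $L$ with eigenvalue $\alpha^3$, and on $L$ it acts as a product of three complex reflections of the Poincaré disk (or its complement) — by the one-dimensional theory one can realize \emph{any} isometry of $L$ of the right trace-type this way, in particular a loxodromic or regular-parabolic one (choosing $L$ hyperbolic and the three centers appropriately). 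Thus one directly constructs $R_\alpha^{p_3}R_\alpha^{p_2}R_\alpha^{p_1}$ in the $\SU(2,1)$-conjugacy class of $F$. The point $\tau=\tau_{\alpha^2,\alpha}(0)$ (the ``forbidden'' trace for $(\alpha,\alpha)$-decompositions of regular parabolics) lies on $\ell_{\alpha^3}$, but here we have an extra reflection to spend, so the degenerate value is no longer an obstruction.

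\textbf{The main obstacle} I expect is Case A: showing that the real semialgebraic surface~\eqref{eq:surface} actually meets the open region cut out by the first four inequalities of~\eqref{eq:sineq} for \emph{every} $\tau$ in the loxodromic/regular-parabolic range, rather than just generically. The asymptotic argument sketched above handles $|t|$ large, but one must confirm that the resulting $(t_1,t_2)$ genuinely land in one of the four sign-admissible quadrants (outside $[0,1]$) for \emph{some} valid choice of $\pmb\sigma$ compatible with the already-fixed parity of $\sigma_1\sigma_2\sigma_3$; tracking how $d_2,d_3$ (which encode $\real\kappa_\alpha(\tau)$, $\imag\kappa_\alpha(\tau)$, hence the position of $\tau$) influence the admissible branch is where the real work lies, and it is likely cleanest to invoke the geometric meaning of $t_1=\tance(p_1,p_2)$, $t_2=\tance(p_2,p_3)$ together with Lemma~\ref{lemma:tracetau} and Proposition~\ref{prop:traceeigen}: pick a line tangent to $\partial\Delta$ through $\tau$, realize $\alpha^2\cdot\alpha$-type partial products along it, and then use Remark~\ref{rmk:rtilde} (simultaneous change of signs) to adjust signatures until one of the finitely many sign-triples works.
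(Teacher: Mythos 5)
Your overall strategy coincides with the paper's: reduce to showing that, for each relevant trace $\tau$, the set cut out by \eqref{eq:surface} and \eqref{eq:sineq} (with the last inequality suitably relaxed) is nonempty for some choice of signs, and then invoke the fact that the loxodromic class and the regular parabolic class are determined by the trace. The genuine gap is precisely the step you yourself flag as ``the main obstacle'': you never establish that \eqref{eq:surface} actually meets the region cut out by the first four inequalities of \eqref{eq:sineq}, and the heuristic you offer does not close it. For fixed large $t$, the relation $t_1t_2(t_1+t_2-2t)=-(d_1t^2+d_2t+d_3)$ constrains only the symmetric functions of $(t_1,t_2)$, so ``$t_1t_2\approx d_1t/2$'' forces neither $t_1,t_2$ to be individually large nor to share a sign, and one would still have to verify that they are real (i.e.\ $(t_1+t_2)^2\geq 4t_1t_2$) and lie outside $[0,1]$ on the side prescribed by $\pmb\sigma$. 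The paper closes this step by reading \eqref{eq:surface} the other way: for fixed $(t_1,t_2)$ it is a quadratic in $t$ with leading coefficient $d_1=1+4\chi^2>0$ and discriminant $(d_2-2t_1t_2)^2-4d_1(t_1^2t_2+t_1t_2^2+d_3)$, whose dominant term $4t_1^2t_2^2-4d_1(t_1^2t_2+t_1t_2^2)$ makes it positive whenever $t_1,t_2\ll 0$, and likewise, for fixed $t_1<0$, whenever $t_2\gg 1$. This directly yields points in the admissible quadrants $t_1,t_2<0$ and $t_1<0<1<t_2$, i.e.\ for the sign patterns $\sigma_1=-\sigma_2=\sigma_3$ and $-\sigma_1=\sigma_2=\sigma_3$, once the last inequality of \eqref{eq:sineq} is relaxed to $\sigma_1\sigma_2\sigma_3(2\real\kappa_\alpha(\tau)+1)\leq 0$.

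Your Case B (treating $\tau\in\ell_{\alpha^3}$ by a separate one-dimensional argument about products of three rotations of the Poincar\'e disk) is a genuinely different route, and it carries its own unproved claim: that three rotations by the \emph{fixed} angle $\Arg\alpha^{\pm3}$ about variable centers realize every hyperbolic and every parabolic isometry of the disk. The paper sidesteps this entirely by observing that the collinear triples are parametrized by the same equation \eqref{eq:surface} with the last inequality of \eqref{eq:sineq} turned into an equality, so the single discriminant computation above covers both cases uniformly; I would adopt that treatment rather than develop the hyperbolic-plane lemma.
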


\begin{proof}
Let $F\in\SU(2,1)$ be a regular parabolic or loxodromic isometry
and let $\tau:=\trace F$.
If there exists a triple of points $p_1,p_2,p_3$ such that 
$\trace R_\alpha^{p_3}R_\alpha^{p_2}R_\alpha^{p_1}=\tau$,
then $F$ admits an $\alpha^{(3)}$-decomposition since there exists a single
regular parabolic or loxodromic class corresponding to $\tau$ (see 
Section~\ref{sec:conjclasses}). In this way, we prove that the space given, in
$\mathbb R^3(t_1,t_2,t)$, by equation~\eqref{eq:surface} and by inequalities
obtained by substituting the last one in~\eqref{eq:sineq} by 
$\sigma_1\sigma_2\sigma_3(2\real\kappa_\alpha(\tau)+1)\leq0$ is nonempty for some 
choice of signs $\sigma_i$.

For fixed values of $t_1,t_2$ we have a quadratic equation in $t$ (as
$d_1\neq 0$)
with discriminant $(d_2-2t_1t_2)^2-4d_1(t_1^2t_2+t_1t_2^2+d_3)$. Thus,
the equation \eqref{eq:surface} has a solution for the given
values of $t_1,t_2$ iff 
$$d_2^2-4d_2t_1t_2+4t_1^2t_2^2\geq 4d_1(t_1^2t_2+t_1t_2^2+d_3).$$
Since $d_1>0$, this inequality holds if $t_1,t_2\ll 0$. By the same reason,
for a fixed value of $t_1<0$, there exists $t_2>1$ satisfying the inequality above.
In this way, we prove that there are solutions for any choice of signs
satisfying $\sigma_1=-\sigma_2=\sigma_3$ and $-\sigma_1=\sigma_2=\sigma_3$. 
\end{proof}

\begin{rmk}
\label{rmk:tracezero}
From the proof of Proposition~\ref{prop:parlox3}, it follows that if
$F$ is an elliptic isometry with $\trace F=0$, then $F$ admits an 
$\alpha^{(3)}$-decomposition for any parameter $\alpha\in\mathbb S^1\setminus\Omega$.
In fact, it follows from the mentioned proof that there exist $p_1,p_2,p_3\in\PV\setminus\SV$ and $\delta\in\Omega$ such that $F=\delta
R_\alpha^{p_3}R_\alpha^{p_2}R_\alpha^{p_1}$. Since two isometries with
trace $\tau=0$ are conjugated (see Section~\ref{sec:conjclasses}), the result follows.
\end{rmk}

\subsection{Decomposing nonregular isometries}
Since, for any parameter $\alpha\in\mathbb S^1\setminus\Omega$ and any triple
$p_1,p_2,p_3\in\PV\setminus\SV$ of pairwise orthogonal points, we have
$R_\alpha^{p_3}R_\alpha^{p_2}R_\alpha^{p_1}=1$, the identical class admits an
$\alpha^{(3)}$-decomposition. This implies, together with Proposition~\ref{prop:parlox3},
that $\mathsf E_{\alpha^{(3)}}$ contains the nondiagonal side and the vertex
of $\rho(\mathcal E)$. Using the sets
$\mathsf E_{\alpha_1,\alpha_2}^{\sigma_1\sigma_2}$ described in
Proposition~\ref{prop:a1a2eldecomp}, we obtain in the next proposition that
the special elliptic classes in the nondiagonal side of $\rho(\mathcal E)$, i.e.,
those with positive center, also admit an $\alpha^{(3)}$-decomposition.

\begin{prop}
\label{prop:specialdecomp}
Every special elliptic isometry with positive center admits 
an\/ $\alpha^{(3)}$-de{\-}com{\-}po{\-}si{\-}tion, for any parameter\/ $\alpha\in\mathbb S^1
\setminus\Omega$.
\end{prop}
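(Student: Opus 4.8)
The plan is to realize a special elliptic isometry with positive center as a product $R_\alpha^{p_3}R_\alpha^{p_2}R_\alpha^{p_1}$ by factoring it through an intermediate semisimple isometry that already admits an $(\alpha,\alpha)$-decomposition. Concretely, let $F$ be special elliptic with positive center, so its angle pair is of the form $(0,\theta)\simeq(2\pi,\theta)$; writing $F$ up to a cube root of unity, we want $\delta R_\alpha^{p_3}(R_\alpha^{p_2}R_\alpha^{p_1})=F$, which amounts to finding a point $p_3$ and an isometry $G:=R_\alpha^{p_2}R_\alpha^{p_1}$ with $R_\alpha^{p_2}R_\alpha^{p_1}$ ranging over the classes in $\mathsf E_{\alpha,\alpha}$ described in Proposition~\ref{prop:a1a2eldecomp} (really its specialization in the Corollary), such that $R_{\bar\alpha}^{p_3}F=G$ for some center $p_3$. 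Since $R_{\bar\alpha}^{p_3}F$ is again a product of two special elliptic isometries with parameters $\bar\alpha$ and (the parameter of $F$), the statement $F=R_\alpha^{p_3}G$ is equivalent to saying that the class of $F$ and the class of $G$ both lie on a common ``length-$2$ locus'': there exist a point $p_3$ such that $R_\alpha^{p_3}$ conjugates/relates them. I would phrase this via the product map idea sketched in the introduction.

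First I would fix the positive-center special elliptic class $F$ with angle pair $(2\pi,\theta)$, $0<\theta<2\pi$, and observe that by Corollary~\ref{cor:deltaalpha} we may assume $\alpha=e^{ai}$ with $0<a<2\pi/3$. The key input is the nondiagonal side of $\rho(\mathcal E)$: by the remark preceding this proposition, $\mathsf E_{\alpha^{(3)}}$ already contains that entire side together with the vertex, coming from Proposition~\ref{prop:parlox3} (regular parabolics, which sit on that side's fiber) and the identical class. But the special elliptic classes with positive center are precisely the \emph{other} elements of the fibers of $\rho$ over that side — so this containment does not immediately give them. So the plan is: take a regular parabolic isometry $P$ with the same angle pair $\{-3\theta/3,0\}=\{\theta,0\}$... wait, more precisely take $P$ ellipto-parabolic with angle pair $(2\pi,\theta)$, which we know admits an $\alpha^{(3)}$-decomposition, and then deform the decomposition of $P$ to one of $F$ using a bending/continuity argument, exploiting that $S_{\pmb\alpha,\pmb\sigma,\tau}$ for $\tau=\trace F=\trace P\in\partial\Delta$ is a common surface (equation~\eqref{eq:surface}) whose inequalities~\eqref{eq:sineq} only constrain signs, not whether the resulting isometry is parabolic or special elliptic.

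Here is the more robust route I would actually write out. By Proposition~\ref{prop:a1a2eldecomp}, for parameters $\alpha_1,\alpha_2$ with $\alpha_1=\alpha_2=\alpha$, the set $\mathsf E_{\alpha,\alpha}^{+-}=\mathsf E_{\alpha,\alpha}^{-+}$ is a single point on the nondiagonal side, and $\mathsf E_{\alpha,\alpha}^{--}$ is a segment one of whose endpoints is a special elliptic class with \emph{negative} center. The diagonal side (negative-center special elliptic classes) is reached by $\mathsf E_{\alpha,\alpha}^{--}$, not the nondiagonal one. To get the nondiagonal (positive-center) ones, I would instead use that $F=R_\alpha^{p_3}\cdot R_\alpha^{p_2}R_\alpha^{p_1}$ where I choose $R_\alpha^{p_2}R_\alpha^{p_1}$ to be the loxodromic (or regular parabolic) isometry whose trace is $\tau_{\alpha,\alpha}(t)$ for suitable $t$, already covered by Proposition~\ref{prop:a1a2loxpardecomp}; then I need $R_{\bar\alpha}^{p_3}F$ to have that trace and lie in $\ell_{\alpha^2}$, with the correct (loxodromic or regular parabolic) type so that the length-$2$ classification applies with no ambiguity. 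Concretely: pick any point $p_3$ generic with respect to (a fixed representative of) $F$, set $G:=R_{\bar\alpha}^{p_3}F$, and compute $\trace G$ using Lemma~\ref{lemma:tracetau} applied to the decomposition $G=R_{\bar\alpha}^{p_3}\cdot(\text{polar/center data of }F)$ — here one uses that $F$ itself is $R_\mu^q$ for $\mu$ the parameter of $F$ and $q$ its positive center, so $G=R_{\bar\alpha}^{p_3}R_\mu^q$ and $\trace G=\tau_{\mu,\bar\alpha}(\tance(q,p_3))$, which sweeps out the tangent line $\ell_{\mu\bar\alpha}$ as $p_3$ varies. Choosing $p_3$ so that $\tance(q,p_3)$ lands outside $[0,1]$ and $\mu\bar\alpha\neq$ a degenerate value makes $G$ loxodromic; then $\trace G\in\ell_{\mu\bar\alpha}$ and I separately need $\ell_{\mu\bar\alpha}$ to meet $\ell_{\alpha^2}$ — it does, inside $\Delta$, by the last paragraph of Subsection~\ref{subsec:tangentlines} — pick $p_3$ giving that intersection point; if it is loxodromic apply the loxodromic bullet of Proposition~\ref{prop:a1a2loxpardecomp} to write $G=R_\alpha^{p_2}R_\alpha^{p_1}$, hence $F=R_\alpha^{p_3}R_\alpha^{p_2}R_\alpha^{p_1}$ after absorbing a cube root of unity via Proposition~\ref{prop:decomp}(ii).

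The main obstacle I anticipate is the boundary/degeneracy bookkeeping: ensuring that for \emph{every} positive-center angle pair $(2\pi,\theta)$, $0<\theta<2\pi$, one can choose $p_3$ so that the intersection point $\ell_{\mu\bar\alpha}\cap\ell_{\alpha^2}$ lies in $\mathbb C\setminus\Delta$ (loxodromic) or at worst on $\partial\Delta\setminus\{3,3\omega,3\omega^2\}$ at a non-$\tau_{\alpha,\alpha}(0)$ point (regular parabolic), so that one of the two bullets of Proposition~\ref{prop:a1a2loxpardecomp} applies cleanly. The intersection of two distinct tangent lines always lies in $\Delta$ (closed), so it may be forced onto $\partial\Delta$ for special $\theta$; there one must check the excluded cases $\tau_{\alpha,\alpha}(0)$, $\{3,3\omega,3\omega^2\}$ are avoided, or handle them by a separate small perturbation of $p_3$ (varying $\tance(q,p_3)$ slightly moves $\trace G$ along $\ell_{\mu\bar\alpha}$, hence off the bad point while keeping $G$ of a type covered by the length-$2$ results). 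I expect this case-check, together with confirming $\mu\bar\alpha,\mu\bar\alpha\cdot\omega^{\pm1}\notin\Omega$ so that $\ell_{\mu\bar\alpha}$ is a genuine tangent line and admits factorizations $\mu\bar\alpha=\alpha_1\alpha_2$ with $\alpha_1=\alpha_2=\alpha$ only when $\mu\bar\alpha=\alpha^2$ — to be the only delicate point; everything else is a direct application of Propositions~\ref{prop:a1a2loxpardecomp}, \ref{prop:parlox3}, \ref{prop:decomp} and Lemma~\ref{lemma:tracetau}.
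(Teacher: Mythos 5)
There is a genuine gap, and it sits exactly at the point you flag as ``the only delicate point'' --- except that it is not delicate, it is fatal to the route you chose. Your plan is to pick $p_3$ so that $G:=R_{\overline\alpha}^{p_3}F=R_{\overline\alpha}^{p_3}R_\mu^q$ is loxodromic (or regular parabolic) with $\trace G\in\ell_{\alpha^2}$, and then invoke Proposition~\ref{prop:a1a2loxpardecomp}. But $\trace G=\tau_{\mu,\overline\alpha}(\tance(q,p_3))$ always lies on $\ell_{\mu\overline\alpha}$, so the requirement $\trace G\in\ell_{\alpha^2}$ forces $\trace G$ into $\ell_{\mu\overline\alpha}\cap\ell_{\alpha^2}$. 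Whenever these tangent lines are distinct (i.e., whenever $\mu\neq\delta\alpha^3$ for all $\delta\in\Omega$), that intersection is a single point which, by the last paragraph of Subsection~\ref{subsec:tangentlines}, lies in $\Delta$ --- and generically in $\Delta^\circ$, since it lies on $\partial\Delta$ only when two of the eigenvalues $\alpha^2,\mu\overline\alpha,\overline\mu\overline\alpha$ coincide. So for a generic positive-center special elliptic $F$, the isometry $G$ you construct is necessarily \emph{regular elliptic}; it is never loxodromic, and it is regular parabolic only for exceptional $\mu$. Neither bullet of Proposition~\ref{prop:a1a2loxpardecomp} applies, and no perturbation of $p_3$ helps: moving $\trace G$ along $\ell_{\mu\overline\alpha}$ moves it off $\ell_{\alpha^2}$ altogether. (Your own text records the contradiction: you cite that the intersection lies inside $\Delta$ and two sentences later ask for it to lie in $\mathbb C\setminus\Delta$.)

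Once $G$ is regular elliptic, equality of traces is no longer enough: over each trace in $\Delta^\circ$ there are three distinct $\SU(2,1)$-conjugacy classes, and you need the actual class of $G=R_{\overline\alpha}^{p_3}R_\mu^q$ (which, as $p_3$ varies, sweeps out only the classes in $\mathsf E_{\mu,\overline\alpha}$) to coincide with a class admitting an $(\alpha,\alpha)$-decomposition, i.e., one in $\mathsf E_{\alpha,\alpha}$. Proving that these two families of \emph{classes} --- not merely their traces --- intersect is the entire content of the paper's proof of this proposition: it compares the segments $\mathsf E_{\alpha,\alpha}^{\sigma_1\sigma_2}$ and $\mathsf E_{\beta,\overline\alpha}^{\sigma_1\sigma_2}$ of Proposition~\ref{prop:a1a2eldecomp} inside $\rho(\mathcal E)$, uses the unfolded trace together with Remark~\ref{rmk:rtilde} (simultaneous change of signs) to control negative-type eigenvalues, and runs a pigeonhole argument over the three classes per trace, treating separately the degenerate parameters $\beta\in\{\delta\alpha^3,-\delta,\delta\overline\alpha^3\}$, $\delta\in\Omega$. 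Your proposal stops at the trace-level computation, which is exactly the step that does not suffice for elliptic isometries, so the argument does not close.
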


\begin{proof}
Let $\beta\in\mathbb S^1\setminus\Omega$ be a parameter. We will prove that
$\mathsf E_{\alpha,\alpha}^{--}\cup \mathsf E_{\alpha,\alpha}^{++}$ intersects
$\mathsf E_{\beta,\overline\alpha}^{+-}\cup \mathsf E_{\beta,\overline\alpha}^{++}$.
If this intersection occurs in $\mathcal E^{\mathrm{reg}}$, then
there exists a relation of the form 
$R_\alpha^{p_2}R_\alpha^{p_1}=\delta R_{\overline\alpha}^{p_3}R_{\beta}^q$,
where $q\in\EV$ and $\delta\in\Omega$, and, in this case, the result follows.

But, first, we need to consider the case where
$\mathsf E_{\alpha,\alpha}^{--}\cup \mathsf E_{\alpha,\alpha}^{++}$ intersects
$\mathsf E_{\beta,\overline\alpha}^{+-}\cup \mathsf E_{\beta,\overline\alpha}^{++}$
over the boundary of $\rho(\mathcal E)$. In this case, there exists 
$\delta\in\Omega$ such that $\delta\overline\alpha\beta\in\{
\alpha^2,\overline\alpha,-\overline\alpha,\overline\alpha^4\}$. In fact, by 
Proposition~\ref{prop:traceeigen}, the set
$\mathsf E_{\alpha,\alpha}^{--}\cup \mathsf E_{\alpha,\alpha}^{++}$
is contained in the inverse image under $\alttrace$ of the union
$\ell_{\alpha^2}\cup\ell_{\omega\alpha^2}\cup\ell_{\omega^2\alpha^2}$ and, thus,
if it intersects 
$\mathsf E_{\beta,\overline\alpha}^{+-}\cup \mathsf E_{\beta,\overline\alpha}^{++}$ 
over the boundary of $\rho(\mathcal{E})$, it follows that
there exists $\delta\in\Omega$ such that $\ell_{\alpha^2}$ intersects 
$\ell_{\delta\overline{\alpha}\beta}$ over $\partial\Delta$. Moreover,
the points where $\ell_{\alpha^2}$ intersect $\partial\Delta$ are exacty
$\tau_1:=\alpha^2+\overline{\alpha}+\overline{\alpha}$,
$\tau_2:=\alpha^2-\overline{\alpha}-\overline{\alpha}$,
and $\tau_3:=\alpha^2+\alpha^2+\overline\alpha^{4}$. 
So, we can assume
that there exists $\delta\in\Omega$ such that $\beta\in\{\delta\alpha^3,-\delta,\delta
\overline\alpha^{3}\}$.
If $\beta=\delta\alpha^3$ then, for any $p\in\PV\setminus\SV$, 
we have $R_{\alpha}^pR_{\alpha}^p=R_{\alpha^2}^p=
\delta^2 R_{\delta\alpha^3}^pR_{\overline\alpha}^p$, so 
$R_\beta^p$ admits an $\alpha^{(3)}$-decomposition. If $\beta=-\delta$,
taking positive points $p,q\in\EV$ with $\tance(p,q)=1$, the isometry
$R_{\overline\alpha}^qR_\beta^p$ is parabolic isometry with trace 
$-\delta\overline\alpha-\delta\overline\alpha+\delta\alpha^2$ and, by
Proposition~\ref{prop:a1a2loxpardecomp}, admits an $\alpha^{(2)}$-decomposition.
Finally, if $\beta=\delta\overline\alpha^3$, given $p\in\EV$, there exists
$q\in\PV\setminus\SV$ such that $R_{\overline\alpha}^qR_\beta^p$ is a parabolic with trace
$\delta\overline\alpha^4+\delta\alpha^2+\delta\alpha^2$ (see the proof of
Proposition~\ref{prop:a1a2eldecomp}); again by Proposition~\ref{prop:a1a2loxpardecomp},
the isometry $R_{\overline\alpha}^qR_\beta^p$ admits a $\alpha^{(2)}$-decomposition.

\smallskip

Now, if $\beta\notin\{\delta\alpha^3,-\delta,\delta
\overline\alpha^{3}\}$ for every $\delta\in\Omega$, then
$\mathsf E_{\alpha,\alpha}^{++}\cap\mathsf E_{\beta,\overline\alpha}^{+-}=\varnothing$.
In fact, if this intersection is nonempty, since it occurs in $\mathcal E^{\mathrm{reg}}$,
we obtain a relation $R_{\alpha}^{p_2}R_{\alpha}^{p_1}=
\delta_0 R_{\overline\alpha}^{p_3}R_{\beta}^q$, with $\sigma p_1=\sigma p_2=\sigma q=1$, 
$\sigma p_3=-1$, and $\delta_0\in\Omega$. This implies that 
$R_{\alpha}^{p_3}R_{\alpha}^{p_2}=
\delta_0 R_{\beta}^qR_{\overline\alpha}^{p_1}=:R$. Since $\sigma p_2\neq\sigma p_3$,
the isometry $R$ is loxodromic and, by Proposition~\ref{prop:traceeigen}, the lines 
$\ell_{\alpha^2}$ and $\ell_{\delta_0\overline\alpha\beta}$ intersect outside $\Delta$, 
thus $\ell_{\alpha^2}=\ell_{\delta_0\overline\alpha\beta}$ or, equivalently, 
$\alpha^2=\delta_0\overline\alpha\beta$. 
Analogously, $\mathsf E_{\alpha,\alpha}^{--}\cap
\mathsf E_{\beta,\overline\alpha}^{++}=\varnothing$ and 
$\mathsf E_{\alpha,\alpha}^{--}\cap
\mathsf E_{\beta,\overline\alpha}^{-+}=\varnothing$ when 
$\beta\notin\{\delta\alpha^3,-\delta,\delta\overline\alpha^{3}\}$ for every $\delta\in\Omega$.

Suppose that $\beta\notin\{\delta\alpha^3,-\delta,\delta
\overline\alpha^{3}\}$ for every $\delta\in\Omega$. 
We will prove by contradiction that if $\mathsf E_{\alpha,\alpha}^{++}\cap
\mathsf E_{\beta,\overline\alpha}^{++}=\varnothing$, then
$\mathsf E_{\alpha,\alpha}^{--}\cap\mathsf E_{\beta,\overline\alpha}^{+-}
\neq\varnothing$. Suppose that 
$\mathsf E_{\alpha,\alpha}^{--}\cap\mathsf E_{\beta,\overline\alpha}^{+-}=
\varnothing$ and that $\mathsf E_{\alpha,\alpha}^{++}\cap
\mathsf E_{\beta,\overline\alpha}^{++}=\varnothing$. By item~(1) of 
Remark~\ref{rmk:a1a2props}, the subsegment of $\mathsf E_{\beta,\overline\alpha}^{++}$
of slope $-1$ intersects the nondiagonal side of~$\rho(\mathcal E)$ but does not intersect 
the diagonal ($\mathsf E_{\beta,\overline\alpha}^{++}$ has a vertex in 
$\mathcal E^{\text{reg}}$);
denote by $\mathsf C$ the line segment that complements 
$\mathsf E_{\beta,\overline\alpha}^{++}$, i.e., $\mathsf C$ is the segment of 
slope~$-1$ connecting the vertex of $\mathsf E_{\beta,\overline\alpha}^{++}$ lying 
in~$\mathcal E^{\text{reg}}$ and the diagonal side of $\rho(\mathcal E)$.
By item~(2) of Remark~\ref{rmk:a1a2props}, there exists 
$\delta_1\in\Omega$, $\delta_1\neq 1$, such that
$\alttrace(\mathsf C\cap\mathcal E^{\text{reg}})=
\delta_1\alttrace(\mathsf R_{\beta,\overline\alpha}^{+-})$, where 
$\mathsf R_{\beta,\overline\alpha}^{+-}:= \mathsf E_{\beta,\overline\alpha}^{+-}
\cap\mathcal E^{\text{reg}}$. Thus,
$\mathsf E_{\alpha,\alpha}^{++}\cap\mathsf E_{\beta,\overline\alpha}^{++}=
\varnothing$ implies $\mathsf E_{\alpha,\alpha}^{++}\cap\mathsf C\neq\varnothing$,
otherwise we would have two lines tangent to $\partial\Delta$ that do not intersect 
(see Proposition~\ref{prop:traceeigen}). Moreover, as $\mathsf C$ is a segment of
slope~$-1$, such intersection point does not lie in the segment of slope~$-1$
that composes $\mathsf E_{\alpha,\alpha}^{++}$.
Therefore, there exist $p_1,p_2,p_3\in\PV\setminus\SV$,
with $\sigma p_1=\sigma p_2=1$ and $\sigma p_3=-1$, and $\delta_2\in\Omega$, 
such that $\trace R_\alpha^{p_2}R_\alpha^{p_1}=
\delta_2\trace R_{\overline\alpha}^{p_3}R_\beta^q$. Denote
$R:=R_{\alpha}^{p_2}R_{\alpha}^{p_1}$ and $S:=
\delta_2 R_{\overline\alpha}^{p_3}R_\beta^q$.
By item~(1) of Remark~\ref{rmk:a1a2props}, the
line $L_1:=\mathrm L(p_1,p_2)$ is hyperbolic (as the class of 
$R$ does not lie in the segment of slope $-1$ that composes $\mathsf E_{\alpha,\alpha}$). 
The line $L_2:=\mathrm L(p_3,q)$ is also hyperbolic, as $p_3$ and $q$ have opposite
signatures. Let $\tilde p_1,\tilde p_2$ be respectively the points in $L_1$ orthogonal 
to $p_1,p_2$, and let $\tilde p_3,\tilde q$ be respectively the points in $L_2$
orthogonal to $p_3,q$. Define $\widetilde R:=R_\alpha^{\tilde p_2}R_\alpha^{\tilde p_1}$
and $\widetilde S:=\delta_2 R_{\overline\alpha}^{\tilde p_3}R_\beta^{\tilde q}$.
By Remark~\ref{rmk:rtilde},
$R$ and $\widetilde R$
are isometries with the same trace but lying in distinct $\SU(2,1)$-conjugacy
classes, and the same holds for the isometries $S$ and $\widetilde S$.
Since trace determines eigenvalues in $\SU(2,1)$, the eigenvalues of the 
isometries $R,\widetilde R,S,\widetilde S$ are 
$\alpha^2,\delta_2\overline\alpha\beta,\delta_2^2\overline\alpha\overline\beta$.
In this way, one of the isometries $R,\widetilde R$ lies in the same $\SU(2,1)$-conjugacy
class of one of the isometries $S,\widetilde S$.
Moreover, as the lines $L_1,L_2$ are hyperbolic, the negative type eigenvalue of both 
$R$ and $\widetilde R$ is not $\alpha^2$, and the negative type eigenvalue of
both $S$ and $\widetilde S$ is not $\delta_2\overline\alpha\overline\beta$.
Suppose that the negative type eigenvalue of $S$ is 
$\delta_2^2\overline\alpha\overline\beta$. Then either $[R]=[S]$
(which contradicts $\mathsf E_{\alpha,\alpha}^{--}\cap
\mathsf E_{\beta,\overline\alpha}^{+-}=\varnothing$) or $[\widetilde R]=[S]$ (which
contradicts $\mathsf E_{\alpha,\alpha}^{++}\cap\mathsf
E_{\beta,\overline\alpha}^{+-}=\varnothing$). Therefore,
the negative eigenvalue of $S$ is $\alpha^2$. This implies that either
$[R]=[\widetilde S]$ (which contradicts $\mathsf E_{\alpha,\alpha}^{--}\cap
\mathsf E_{\beta,\overline\alpha}^{-+}=\varnothing$) or $[\widetilde R]=[\widetilde S]$
(which contradicts $\mathsf E_{\alpha,\alpha}^{++}\cap
\mathsf E_{\beta,\overline\alpha}^{-+}=\varnothing$).
\end{proof}

%\begin{prop}
%Let\/ $\alpha\in\mathbb S^1\setminus\Omega$ be a parameter.
%A\/ $2$-step unipotent isometry admits an\/ $\alpha^{(3)}$-decomposition 
%iff\/ $\alpha^3\in\Omega$.
%\end{prop}

\begin{prop}
\label{prop:2stepdecomp}
If\/ $\alpha^3\in\Omega\cup-\Omega$, every\/ $2$-step unipotent isometry admits 
an\/ $\alpha^{(3)}$-decomposition.
\end{prop}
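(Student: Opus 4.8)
The plan is to reduce to a single, concrete model isometry in each of the relevant conjugacy classes and exhibit an explicit $\alpha^{(3)}$-decomposition by choosing the three centers to lie in a common complex line (so that the product is easy to compute via~\eqref{eq:specialelliptic}). Recall from Section~\ref{sec:conjclasses} that when $\tau\in\{3,3\omega,3\omega^2\}$ there are exactly two $\SU(2,1)$-conjugacy classes of $2$-step unipotent isometries (distinguished by the signature of the isotropic point's Euclidean polar line, equivalently by the negative type eigenvalue) together with one $3$-step unipotent class; by item~({\it i}\/) of Proposition~\ref{prop:decomp} it suffices to produce one representative of each $2$-step unipotent class as a length-$3$ product of special elliptic isometries with parameter~$\alpha$. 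By Corollary~\ref{cor:deltaalpha} we may also multiply $\alpha$ by a cube root of unity at will, so the hypothesis $\alpha^3\in\Omega\cup-\Omega$ means we are in one of the cases $a=\frac{2\pi}9,\frac{\pi}3,\frac{4\pi}9$, and the trace of any $\alpha^{(3)}$-product will land in $\partial\Delta$ (indeed, by Lemma~\ref{lemma:ellinverse}, the locus $\ell_{\alpha^3}$ is forced).

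First I would fix a hyperbolic complex line $L=\mathbb P c^\perp$ with polar point $c\in\EV$, pick an isotropic point $v\in\SV\cap L$, and consider collinear centers $p_1,p_2,p_3\in L$. Using~\eqref{eq:specialelliptic}, the restriction of $R_\alpha^{p_i}$ to the two-dimensional space $c^\perp$ is a unitary (actually $\SU(1,1)$-type) transformation of the Poincaré-disk line $L\cap\BV$ rotating by angle $\Arg\alpha^{-3}$ about $p_i$ (when $p_i\in L\cap\BV$) or a complex reflection fixing $p_i\in L$ pointwise on the orthogonal direction; and on $\mathbb C c$ it acts by the scalar $\alpha$. Since $\alpha^3=\pm 1$, the angle $\Arg\alpha^{-3}$ is $0$ or $\pi$: if $\alpha^3=1$ then $R_\alpha^{p_i}$ restricted to $L$ is scalar, which is too degenerate, so the interesting case is $\alpha^3=-1$, where $R_\alpha^{p_i}|_L$ is the half-turn (an involution) of $L\cap\BV$ about $p_i$. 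The composition of three half-turns of the Poincaré disk about collinear points is again a half-turn about a point \emph{on the boundary of the geodesic through them} precisely when the three points accumulate appropriately — more usefully, by taking $p_1,p_2,p_3$ so that the product of the three half-turns of $L$ is a parabolic (unipotent) isometry of $L\cap\BV$ fixing $v$, the resulting $R_\alpha^{p_3}R_\alpha^{p_2}R_\alpha^{p_1}$ fixes $v\in\SV$ and fixes $c\in\EV$ (acting on $\mathbb C c$ by $\alpha^3=-1$, i.e.\ by $1$ after passing to $\PU(2,1)$), hence it pointwise fixes the Euclidean polar line of $v$ — this is exactly a $2$-step unipotent isometry with positive-type polar point. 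To get the other $2$-step class, I would instead arrange the centers on a \emph{spherical} line, or keep them on $L$ but let $c$ itself be the negative-type vector: the same computation with the roles of the $+$ and $-$ eigenspaces swapped produces a $2$-step unipotent with negative-type polar data. For the cases $\alpha^3=1$ (i.e.\ $a=\frac{2\pi}9$ or $\frac{4\pi}9$ up to the $\Omega$-action does not remove these), I would instead use non-collinear centers and appeal to Proposition~\ref{prop:specialdecomp} together with the fact that a $2$-step unipotent is a limit of special elliptics: more precisely, a $2$-step unipotent with positive polar point is the product of a special elliptic with positive center (parameter $\alpha$, already known to be an $\alpha^{(3)}$-isometry by Proposition~\ref{prop:specialdecomp}) and a suitable special elliptic — but this overshoots, so the cleaner route is to write the $2$-step unipotent directly as $R_\alpha^{p_1}R_\alpha^{p_2}R_\alpha^{p_3}$ with $p_1=p_3$ orthogonal to $p_2$, which collapses to $R_{\alpha^2}^{p_1}$ conjugated — I would check which degenerate configurations of centers produce a unipotent.

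The main obstacle I anticipate is the bookkeeping that distinguishes the \emph{two} $2$-step unipotent classes: both have trace in $\{3,3\omega,3\omega^2\}$ and differ only by the type of the polar data, so I must verify that the two model configurations I choose (centers in a hyperbolic line with $c$ positive, versus the mirrored configuration) genuinely land in different $\SU(2,1)$-conjugacy classes, and that each really is $2$-step (not $3$-step) unipotent — the latter amounts to checking that the product pointwise fixes a complex line, equivalently that its Jordan form has a $2\times 2$ block rather than a $3\times 3$ one, which I would read off from the explicit matrix. A secondary nuisance is handling all three admissible values $a=\frac{2\pi}9,\frac{\pi}3,\frac{4\pi}9$ uniformly; since $\alpha^3=-1$ gives $a=\frac{\pi}3$ and the other two have $\alpha^3=1$, I expect to treat $a=\frac\pi3$ by the collinear-centers/three-half-turns argument above and $a=\frac{2\pi}9,\frac{4\pi}9$ by a direct degenerate-configuration computation (or by noting $R_\alpha^{p}R_\alpha^{p}R_\alpha^{q}=R_{\alpha^2}^pR_\alpha^q$ and choosing $p,q$ so this is $2$-step unipotent, using that $\alpha^2$ is then a primitive cube root of $-1$ times a unit so that Lemma~\ref{lemma:euclideanline}-type reasoning applies after one more factor). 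Once the explicit representatives are in hand, the conclusion is immediate from Proposition~\ref{prop:decomp}({\it i}\/).
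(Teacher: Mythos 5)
Your central construction does not work. If the three centers $p_1,p_2,p_3$ all lie in a hyperbolic complex line $L=\mathbb Pc^\perp$, then by~\eqref{eq:specialelliptic} each $R_\alpha^{p_i}$ fixes $c$ with eigenvalue $\alpha$, so $F:=R_\alpha^{p_3}R_\alpha^{p_2}R_\alpha^{p_1}$ fixes $c$ with eigenvalue $\alpha^3$. A $2$-step unipotent isometry lifts to $\SU(2,1)$ with all three eigenvalues equal to a single cube root of unity, so a collinear configuration can only produce a $2$-step unipotent if $\alpha^3\in\Omega$. In your main case $\alpha^3=-1$ this is impossible: even when you arrange the three half-turns of $L\cap\BV$ to compose to a parabolic of $L$, the restriction $F|_{c^\perp}$ has determinant $\alpha^{-3}=-1$, so its repeated eigenvalue is $\pm i$ and $F$ has eigenvalues $-1,\pm i,\pm i$ --- an ellipto-parabolic, and multiplying by $\delta\in\Omega$ cannot fix this since $-\delta\notin\Omega$. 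The spherical-line variant is worse ($F$ then fixes a negative point and is elliptic), and your fallback for the remaining case is also impossible: $R_{\alpha^2}^pR_\alpha^q$ with $p\neq q$ is always regular (this is exactly the fact, quoted at the end of Subsection~\ref{subsec:generic} from \cite[Lemma~4.2]{spell}, used to show $2$-step unipotents have no length-$2$ decomposition), while $2$-step unipotents are precisely the non-regular parabolics. A smaller confusion: $\alpha^3=1$ never occurs since $\alpha\notin\Omega$; the subcase $a=\frac{2\pi}9,\frac{4\pi}9$ has $\alpha^3=\omega^{\pm1}$, where $R_\alpha^{p_i}|_L$ is a rotation by $\pm\frac{2\pi}3$, not a scalar, so the collinear idea is not ``too degenerate'' there --- it is the \emph{only} case where it has a chance, but you would still have to verify the sign of the repeated eigenvalue, which you do not address.

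The paper's proof avoids all of this. For $\alpha^3=\omega^j$ it takes $p\in\mathbb Pv^\perp$ nonisotropic, where $v$ is the fixed point of the $2$-step unipotent $U$, and observes that $R_{\overline\alpha}^pU$ is ellipto-parabolic with trace $2\overline\alpha+\alpha^2=\tau_{\alpha,\alpha}(0)=\omega^j\tau_{\alpha,\alpha}(1)$; by Lemma~\ref{lemma:euclideanline} and the fact that the ellipto-parabolic class is determined by its trace, $R_{\overline\alpha}^pU$ admits an $\alpha^{(2)}$-decomposition, whence $U=R_\alpha^p\bigl(R_{\overline\alpha}^pU\bigr)$ admits an $\alpha^{(3)}$-decomposition. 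For $\alpha^3=-1$ the isometries $R_\alpha^p$ are involutions and the statement is \cite[Proposition~16]{Will2017}. If you want a self-contained argument, you should follow a reduction of this type rather than a collinear normal form, which the eigenvalue count above rules out.
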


\begin{proof}
Suppose that $\alpha^3\in\Omega$, i.e., $\alpha^3=\omega^j$ for some $j=1,2$.
Let $U\in\SU(2,1)$ be a $2$-step unipotent isometry fixing an isotropic point
$v\in\SV$ with eigenvalue $1$.
If $p\in\mathbb Pv^\perp$ is a nonisotropic point, then the isometry 
$R:=R_{\overline\alpha}^{p}U$ fixes $v$ with eigenvalue $\overline\alpha$
and fixes $p$ with eigenvalue $\alpha^2$. Since $\alpha^3\neq 1$, $R$ is
ellipto-parabolic with $\trace R=2\overline\alpha+\alpha^2=\tau_{\alpha,\alpha}(0)$.
But $\alpha^3=\omega^j$ implies
$\tau_{\alpha,\alpha}(0)=\omega^j\tau_{\alpha,\alpha}(1)$. Therefore,
by Proposition~\ref{lemma:euclideanline}, 
the isometry $R$ admits an $\alpha^{(2)}$-decomposition, implying that
$U$ admits an $\alpha^{(3)}$-decomposition.

If $\alpha^3\in-\Omega$ we are in the case of involutions and the proposition 
follows directly from~\cite[Proposition~16]{Will2017}.
\end{proof}

\begin{rmk}
\label{rmk:2stepdecomp}
It remains an open problem to determine if a $2$-step unipotent isometry admits an
$\alpha^{(3)}$-decomposition for parameters satisfying 
$\alpha^3\notin\Omega\cup-\Omega$. 
\end{rmk}

We now determine which regular elliptic isometries and special elliptic isometries
with negative center admit an $\alpha^{(3)}$-decomposition.
In order to do so, it suffices to describe $\mathsf E_{\alpha^{(3)}}$, since its intersection
with the diagonal side of $\rho(\mathcal E)$ corresponds uniquely to the classes of 
special elliptic isometries with negative center. Such description is based on the 
{\it product map\/}.

\subsection{The product map}
\label{subsec:productmap}
In this subsection we briefly summarize some definitions and results that can be
found in \cite{FW2009,Paupert2007,Will2017}.
Consider the spaces $\mathcal G, c(\mathcal G)$ and the projection 
$\rho:\mathcal C\to c(\mathcal G)$ as defined in Subsection~\ref{subsec:puclasses}.
Given two semisimple $\PU(2,1)$-conjugacy classes $C_1,C_2\in c(\mathcal G)$ the
{\it product map\/}, with respect to the given classes,
is the function $\widetilde\mu:C_1\times C_2\to\mathcal G$ defined by
$\widetilde\mu(A,B)=[AB]$, where $[I]$ denotes the conjugacy class of
the isometry $I$. In what follows, we will mainly consider the function 
(which we also 
refer as product map) $\overline\mu:C_1\times C_2\to c(\mathcal G)$, defined by
$\overline\mu:=\rho\circ\widetilde\mu$ .

\begin{defi}
\label{defi:reducible}
We say that a subgroup $\Gamma$ of $\PU(2,1)$ is {\it reducible}\/ if it fixes
a point in $\PV$. Given isometries $A,B\in\PU(2,1)$, we say that the pair $(A,B)$ 
is reducible if it generates a reducible group.
If a subgroup is not reducible we say that it is {\it irreducible}.
Given two semisimple conjugacy classes $C_1$ and $C_2$, the image under 
$\overline\mu:C_1\times C_2\to c(\mathcal G)$ of reducible pairs is called
{\it reducible walls\/}.
\end{defi}

In terms of the above definition, we have the following
properties of the product map.

\begin{prop}
\label{prop:proptildemu}
Given two semisimple conjugacy classes\/ $C_1$ and\/ $C_2$ we have:

\smallskip

$\bullet$ $\overline\mu$ is proper; in particular the image\/ 
$\overline\mu(C_1\times C_2)$ is closed in\/ $c(\mathcal  G)$;

$\bullet$ the image of an irreducible pair in\/ $C_1\times C_2$ under\/ 
$\overline\mu$ is an interior point of\/ $\overline\mu(C_1\times C_2)$;

$\bullet$ the reducible walls, divide $c(\mathcal G)$ in closed {\it chambers}.
Each of these chambers is either full or empty;

$\bullet$ the intersection of the reducible walls of 
$\overline\mu(C_1\times C_2)$ with $\rho(\mathcal E)$ is given by the union of finitely many
line segments of slopes $-1,\frac 12,2$.
\end{prop}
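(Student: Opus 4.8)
The plan is to cite the existing literature for the bulk of the statement and fill in only the small adaptations needed here. The first three bullets are precisely the content of the analysis of the product map carried out in \cite{FW2009,Paupert2007,Will2017}: properness of $\overline\mu$ (and hence closedness of the image, since $C_1\times C_2$ is compact when $C_1,C_2$ are semisimple classes, each being a compact homogeneous space) is established there; the statement that the image of an irreducible pair is an interior point of $\overline\mu(C_1\times C_2)$ is the openness-at-irreducible-pairs result of Paupert; and the decomposition of $c(\mathcal G)$ into chambers cut out by reducible walls, each chamber being full or empty, follows formally from the previous two facts together with connectedness of the complement of the reducible walls. So the real work is in the fourth bullet, the description of the reducible walls $\cap\,\rho(\mathcal E)$ as a finite union of segments of slopes $-1,\tfrac12,2$ in the triangle $\mathsf T$.

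For that, first I would recall that a reducible pair $(A,B)$ with $A\in C_1$, $B\in C_2$ fixes a common point $c\in\PV$, so $A$, $B$, and $AB$ all share an eigenvector at $c$; the eigenvalue $\alpha$ of $AB$ at $c$ is then forced to be a product of an eigenvalue of $A$ and an eigenvalue of $B$, of which there are only finitely many possibilities (nine, counted with the determinant-one constraint reducing the freedom) once $C_1$ and $C_2$ are fixed. Hence the trace of $AB$ lies on one of finitely many lines $\ell_{\alpha}$ tangent to $\partial\Delta$, by Proposition~\ref{prop:traceeigen}. Conversely, every point of $c(\mathcal G)$ whose trace lies on such an $\ell_\alpha$ and which is attained by $\overline\mu$ comes from a reducible pair (one builds $A,B$ simultaneously block-diagonal with respect to the decomposition $\mathbb C c\oplus c^\perp$, matching the prescribed eigenvalue product at $c$). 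Therefore the reducible walls are exactly the portions of $\overline\mu(C_1\times C_2)$ sitting over $\bigcup_\alpha \ell_\alpha$ for this finite set of $\alpha$'s. Intersecting with $\rho(\mathcal E)\cong\mathsf T/\!\simeq$ and using Corollary~\ref{cor:imageella} (together with Lemma~\ref{lemma:walls}), $\alttrace^{-1}(\ell_\alpha)$ is a union of line segments of slopes $-1,\tfrac12,2$ in $\mathsf T$; since there are finitely many $\alpha$ and each contributes finitely many such segments, the claim follows.

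The main obstacle I expect is the careful bookkeeping in the converse direction of the fourth bullet: one must check that a point of $\rho(\mathcal E)$ lying over some $\ell_\alpha$ and in the image actually admits a \emph{reducible} representative $(A,B)$ in the prescribed classes $C_1\times C_2$ — not merely that its trace is compatible — and that the resulting locus is genuinely a union of the full segments described by Corollary~\ref{cor:imageella} rather than a proper subset depending on $C_1,C_2$. This amounts to a semicontinuity/dimension-count argument on the (two-dimensional) reducible subvariety of $C_1\times C_2$, which is exactly what is done in \cite{Paupert2007,Will2017}; I would invoke those results and indicate how the tangent-line picture of Section~\ref{sec:altracetangent} repackages their slope computation. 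The remaining bullets, as noted, require no new argument beyond citing the cited papers and observing that the hypotheses (semisimplicity of $C_1,C_2$) are exactly those under which properness and the interior-point property were proved there.
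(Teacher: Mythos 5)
Your proposal is correct and matches the paper's treatment: the paper states Proposition~\ref{prop:proptildemu} without proof, presenting it explicitly as a summary of results from \cite{FW2009,Paupert2007,Will2017}, which is exactly the citation strategy you adopt. Your additional sketch of the fourth bullet (finitely many forced eigenvalue products at the common fixed point, hence finitely many tangent lines $\ell_\alpha$, pulled back to slope $-1,\frac12,2$ segments via Lemma~\ref{lemma:walls} and Corollary~\ref{cor:imageella}) is consistent with the machinery of Section~\ref{sec:altracetangent} and goes slightly beyond what the paper records, while correctly flagging that the converse direction is what the cited papers actually establish.
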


\subsection{Dividing $\mathsf E_{\alpha^{(3)}}$ into chambers}
\label{subsec:fullempty}
We prove (see Corollary~\ref{cor:cuptildemu}) that $\mathsf E_{\alpha^{(3)}}$ is the
intersection of $\rho(\mathcal E)$ with the union of all images 
$\overline\mu(C_1\times C_2)$, where $C_1$ is the class of a special elliptic isometry 
with parameter~$\alpha$, and $C_2$ is a semisimple class admitting an 
$\alpha^{(2)}$-decomposition. It is quite hard to directly obtain such union, writing 
down every image (as done in~\cite{Will2017} for the case of involutions). 
So, we do this indirectly using {\it bendings\/} (see~\cite{spell}).

\smallskip

Given a product $R:=R_{\alpha_2}^{p_2}R_{\alpha_1}^{p_1}$, where $p_1,p_2$ are
nonisotropic points and $\alpha_1,\alpha_2\in\mathbb S^1\setminus\Omega$
are parameters, if $C\in\SU(2,1)$ is an isometry in the centralizer of
$R$, we have $R_{\alpha_2}^{p_2}R_{\alpha_1}^{p_1}=
R_{\alpha_2}^{Cp_2}R_{\alpha_1}^{Cp_1}$. These are called {\it bending relations\/}. 
Moreover, by \cite[Proposition~4.3]{spell}, there exists a one-parameter
subgroup $B:\mathbb R\to\SU(2,1)$ such that $B(s)$ is in the centralizer of $R$
for every $s\in\mathbb R$ and for every isometry $C$ that commutes with
$R$, there exists $s\in\mathbb R$ with $Cp_i=B(s)p_i$, $i=1,2$. We say
that $B(s)$ is a {\it bending\/} of $R$. 
Bendings act on $p_1,p_2$ by moving these points
over metric circles, hypercycles, horocycles, contained in the line
$\mathrm L(p_1,p_2)$, depending on the nature of the isometry~$R$.

\begin{rmk}
\label{rmk:nondeggeneral}
Let $\alpha\in\mathbb S^1\setminus\Omega$ be a parameter and let 
$p_1,p_2,p_3\in\PV\setminus\SV$. Consider the isometry $F:=R_\alpha^{p_3}
R_\alpha^{p_2}R_\alpha^{p_1}$.
If $p_1$ and $p_2$ are distinct nonorthogonal points and 
$L:=\mathrm L(p_1,p_2)$ is noneuclidean, then a nontrivial 
bending $B(s)$ of $R_1$ satisfies~$\tance(B(s)p_2,p_3)=\tance(p_2,p_3)$ iff
either ($p_3$ is orthogonal to a fixed 
point of $R_1$ and $p_3\not\in\mathrm L(p_1,p_2)$) or ($p_3\in\mathrm L(p_1,p_2)$ and 
$p_3$ is $R_1$-fixed). If $L$ is Euclidean, then $\tance(B(s)p_2,p_3)=\tance(p_2,p_3)$
iff either ($p_3\not\in L$ and $p_3$ is orthogonal to an $R_1$-fixed point) or 
$p_3\in L$.
\end{rmk}

In what follows, for an isometry $I\in\SU(2,1)$, $[I]$ denotes the $\PU(2,1)$-conjugacy
class of the corresponding isometry in~$\PU(2,1)$.

\begin{lemma}
\label{lemma:r1parabolic}
Let\/ $F\in\SU(2,1)$ be an elliptic isometry admitting a decomposition of the form\/
$F=\delta R_\alpha^{p_3}R_\alpha^{p_2}R_\alpha^{p_1}$, where 
$p_1,p_2,p_3\in\PV\setminus\SV$ and $\delta\in\Omega$, such
that\/ $R_\alpha^{p_2}R_\alpha^{p_1}$ is parabolic. If\/~$[F]\neq
[R_{\overline\alpha^3}^q]$, $q\in\EV$, then there exist\/ 
$q_1,q_2,q_3\in\PV\setminus\SV$ such that\/
$F=\delta R_\alpha^{q_3}R_\alpha^{q_2}R_\alpha^{q_1}$ and\/ $R_\alpha^{q_2}R_\alpha^{q_1}$
is either regular elliptic or loxodromic.  
\end{lemma}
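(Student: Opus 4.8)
The plan is to perturb the parabolic middle factor $R_\alpha^{p_2}R_\alpha^{p_1}$ by a bending of $R_2:=R_\alpha^{p_3}R_\alpha^{p_2}$ (or equivalently move $p_1$ by a bending of $R_1:=R_\alpha^{p_2}R_\alpha^{p_1}$), keeping $F$ fixed, so that the new middle product leaves the tangent line $\ell_{\alpha^2}$ only at the tangency point and hence becomes regular elliptic or loxodromic. First I would set $R_1:=R_\alpha^{p_2}R_\alpha^{p_1}$; by hypothesis it is parabolic, so by Lemma~\ref{lemma:euclideanline} either $\alpha^2\notin\Omega$ and $R_1$ is ellipto-parabolic (with $p_1\neq p_2$, $\mathrm L(p_1,p_2)$ Euclidean) or $\alpha^2\in\Omega$ and $R_1$ is $3$-step unipotent; in the $3$-step case $p_1\neq p_2$ and $\mathrm L(p_1,p_2)$ is still Euclidean. (The degenerate possibility $p_1=p_2$, giving $R_1=R_{\alpha^2}^{p_1}$ special elliptic, is excluded since $R_1$ is parabolic.) So in all cases $L:=\mathrm L(p_1,p_2)$ is a Euclidean line, and $R_1$ fixes its isotropic polar point $v$.

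Next I would invoke the bending machinery: by \cite[Proposition~4.3]{spell} there is a one-parameter subgroup $B:\mathbb R\to\SU(2,1)$ of the centralizer of $R_1$ so that $F=\delta R_\alpha^{p_3}R_\alpha^{B(s)p_2}R_\alpha^{B(s)p_1}$ for all $s$, and $B(s)$ moves $p_1,p_2$ along a horocycle in $L$. Set $q_1:=B(s)p_1$, $q_2:=B(s)p_2$, $q_3:=p_3$. The trace of $R_\alpha^{q_2}R_\alpha^{q_1}$ is $\tau_{\alpha,\alpha}(\tance(q_1,q_2))$ and since bendings preserve $\tance(q_1,q_2)=1$, the trace stays at $\tau_{\alpha,\alpha}(1)\in\partial\Delta$; so merely bending $R_1$ does not help. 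Instead I would bend $R_2:=R_\alpha^{q_3}R_\alpha^{q_2}=R_\alpha^{p_3}R_\alpha^{p_2}$ (which is itself special elliptic, parabolic, loxodromic, or regular elliptic). The key point: bending $R_2$ moves $p_2$ (and correspondingly $p_1$, to keep $F$ fixed — this is exactly the coordinate change in \cite[Theorem~5.4]{spell}), changing $t_1=\tance(q_1,q_2)$ away from $1$. Once $t_1\notin\{0,1\}$, the isometry $R_\alpha^{q_2}R_\alpha^{q_1}$ has trace $\tau_{\alpha,\alpha}(t_1)\in\ell_{\alpha^2}\setminus\partial\Delta\subset\Delta^\circ$ or outside $\Delta$, i.e. it is regular elliptic or loxodromic (it is not special elliptic since $q_1\ne q_2$, and not parabolic since parabolics on $\ell_{\alpha^2}$ occur only at $t_1=0,1$ by the analysis around Proposition~\ref{prop:a1a2loxpardecomp}).

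The remaining work is to show such a nontrivial bending of $R_2$ that changes $t_1$ actually exists, and this is where Remark~\ref{rmk:nondeggeneral} (applied with the roles of indices shifted, i.e. to $R_2$ and $q_1$) is the main tool: a nontrivial bending $B(s)$ of $R_2$ fails to change $\tance(B(s)q_2,q_1)$ only in the listed exceptional configurations — roughly, when $q_1$ lies on $\mathrm L(q_2,q_3)$ or is orthogonal to an $R_2$-fixed point, or $\mathrm L(q_2,q_3)$ is Euclidean with $q_1$ on it. I expect the main obstacle to be ruling out these exceptional configurations under the hypothesis $[F]\neq[R_{\overline\alpha^3}^q]$ with $q\in\EV$. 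The idea is that if every bending of $R_2$ fixes $t_1$, then (after also using a simultaneous change of signs, Remark~\ref{rmk:rtilde}, when $R_2$ is semisimple to avoid the ``$q_1$ orthogonal to a fixed point'' branch, or using that $R_2$ is itself parabolic) one is forced into the situation where $p_1,p_2,p_3$ are collinear on a Euclidean line, which forces $\trace F\in\ell_{\overline\alpha^3}$ at the tangency point and pins $F$'s eigenvalues down to $\overline\alpha^3,\alpha^2,\alpha^2$ with $\overline\alpha^3$ of positive type — that is, $[F]=[R_{\overline\alpha^3}^q]$, $q\in\EV$, contradicting the hypothesis. Handling the sub-case where $R_2$ itself is parabolic (so its bendings are horocyclic and the Euclidean-line branch of Remark~\ref{rmk:nondeggeneral} applies) will need a short separate argument, but it leads to the same conclusion. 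Once a bending changing $t_1$ is produced, choosing $s$ generic gives $t_1\notin\{0,1\}$ and we set $q_1:=$ the bent first center, $q_2:=$ the bent second center, $q_3:=p_3$, completing the proof.
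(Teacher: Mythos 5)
Your overall strategy --- keep $F$ fixed and bend $R_2:=R_\alpha^{p_3}R_\alpha^{p_2}$ so that $\tance(p_1,B(s)p_2)$ leaves the finite set of ``parabolic'' values, with Remark~\ref{rmk:nondeggeneral} supplying the nontrivial bending --- is exactly the paper's strategy. However, there are two genuine problems. First, your opening reduction is false: $R_1:=R_\alpha^{p_2}R_\alpha^{p_1}$ parabolic does \emph{not} force $\mathrm L(p_1,p_2)$ to be Euclidean. The trace $\tau_{\alpha,\alpha}(t_1)$ lies on $\partial\Delta$ for $t_1\in\{0,1,t_+\}$, and at $t_1=t_+>1$ the product is (ellipto-)parabolic while $\mathrm L(p_1,p_2)$ is hyperbolic --- these are precisely the parabolic vertices of $\mathsf E_{\alpha,\alpha}^{--}$ and $\mathsf E_{\alpha,\alpha}^{++}$ in Proposition~\ref{prop:a1a2eldecomp}. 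Consequently your later claim that ``once $t_1\notin\{0,1\}$ the new pair is regular elliptic or loxodromic'' is also wrong (you must avoid $t_1=t_+$ as well), and your analysis of the exceptional configurations, built on the Euclidean assumption, does not cover the hyperbolic-line case at all.

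Second, and more seriously, the degenerate case --- which is the real content of the proof --- is only sketched, and the sketch points at the wrong terminal configuration. Collinearity of $p_1,p_2,p_3$ on a Euclidean line cannot be the obstruction you end up at: it would make $F$ fix the isotropic polar point of that line, contradicting ellipticity (the paper discards this possibility at the outset for exactly that reason). The actual terminal configuration is the one where the three lines $\mathrm L(p_i,p_{i+1})$ are all Euclidean, all $p_i$ are positive, and each $p_i$ is orthogonal to a fixed point of $R_{i+1}$; there $F=\delta R_{\overline\alpha^3}^q$ for some nonisotropic $q$ whose signature is \emph{not} determined. The subcase $q\in\BV$ is not excluded by the hypothesis $[F]\neq[R_{\overline\alpha^3}^q]$, $q\in\EV$, so you cannot conclude by contradiction there; the paper rescues it with a simultaneous change of signs (Remark~\ref{rmk:rtilde}), which turns one of the partial products into a loxodromic isometry. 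As written, your proposal leaves this family of isometries unhandled. You also omit the preliminary reduction by cyclic permutation (if $R_2$ or $R_3=R_\alpha^{p_1}R_\alpha^{p_3}$ is already regular elliptic or loxodromic, one is done, which lets one assume all three points have the same signature) and the separate treatment of $\langle p_2,p_3\rangle=0$, both of which are needed before Remark~\ref{rmk:nondeggeneral} can be applied.
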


\begin{proof}
Denote $R_1:=R_\alpha^{p_2}R_\alpha^{p_1}$, $R_2=R_\alpha^{p_3}R_\alpha^{p_2}$,
$R_3=R_\alpha^{p_1}R_\alpha^{p_3}$,
$L_1:=\mathrm L(p_1,p_2)$, $L_2:=\mathrm L(p_2,p_3)$, and $L_3:=\mathrm L(p_1,p_3)$.
If $R_2$ or $R_3$ is regular elliptic or loxodromic, since $[F]=[R_\alpha^{p_1}
R_\alpha^{p_3}R_\alpha^{p_2}]=[R_\alpha^{p_2}R_\alpha^{p_1}R_\alpha^{p_3}]$, 
the result follows. So, we can assume that the points $p_1,p_2,p_3$ have the 
same signature. We can also assume that $p_1,p_2,p_3$ do not lie a same Euclidean line,
otherwise the isometry~$F$ is not elliptic.

The idea now is, by bending $R_2$, to obtain new centers $B(s)p_2,B(s)p_3$ such that
$\tance(p_1,B(s)p_2)\neq\tance(p_1,p_2)$, which implies that the isometry 
$R_\alpha^{B(s)p_2}R_\alpha^{p_1}$ is 
either regular elliptic or loxodromic (see \cite[Corollary~5.10]{spell}).
Note that this approach does not work if $\langle p_2,p_3\rangle=0$ or
if $p_1$ is orthogonal to a fixed point of $R_2$.

The fact that $R_1$ is parabolic implies that $p_1,p_2$ are distinct nonorthogonal
points of same signature. Hence, if $p_3$ is in $L_1$, $p_3$ cannot be an 
$R_1$-fixed point (otherwise $L_1=L_2$ is Euclidean) and thus,
by bending $R_1$ if necessary, we can assume
that $p_2,p_3$ are also distinct and nonorthogonal. So, by bending $R_2$, 
the result follows (see Remark~\ref{rmk:nondeggeneral}).

Suppose that $\langle p_2,p_3\rangle=0$. By what was discussed above,
we can assume that $L_1$ is Euclidean and $L_2$ is spherical. In this case,
$p_1$ is not orthogonal to any fixed point of $R_2$; hence, there is a bending 
$B(s)$ of $R_2$ such that $B(s)p_2$ and $p_3$ are nonorthogonal points
and the result follows.

It remains to consider the case where $p_i\in\EV$,
$L_i$ is Euclidean, and $p_i$ is orthogonal to a fixed point of $R_{i+1}$ 
(indices modulo $3$). By Lemma~\ref{lemma:euclideanline}, for each $i$, there exists 
$c_i\in L_i$ such that $c_i$ is a fixed point of $R_i$ with eigenvalue 
$\delta\alpha^{-3}$. Therefore, $F=\delta R_{\overline\alpha^3}^{q}$ for some 
$q\in\PV\setminus\SV$. If $q\in\BV$, applying a simultaneous change of signs in 
$R_{\overline\alpha}^{p_3}R_{\overline\alpha^{3}}^{q}$ (see Remark~\ref{rmk:rtilde}) we obtain a relation of the form
$R_\alpha^{q_3}R_\alpha^{p_2}R_\alpha^{p_1}=\delta 
R_{\overline\alpha^3}^{\tilde q}$, where $q_3\in\BV$ and $\tilde q\in\EV$, in which
case $R_\alpha^{q_3}R_\alpha^{p_2}$ is loxodromic and
the result follows as above.
\end{proof}

\begin{cor}
\label{cor:cuptildemu}
For any parameter\/ $\alpha\in\mathbb S^1\setminus\Omega$, we have 
\begin{equation}
\label{eq:ea3cup}
\mathsf E_{\alpha^{(3)}}=\bigcup\limits_{
C_1\in\mathsf S_{\alpha},\, 
C_2\in\mathsf G_{\alpha^{(2)}}}
\overline\mu(C_1\times C_2)\cap\rho(\mathcal E).
\end{equation}
\end{cor}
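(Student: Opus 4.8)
The statement to prove is the set equality~\eqref{eq:ea3cup}. The plan is to prove the two inclusions separately, reducing everything to the definition of an $\alpha^{(3)}$-decomposition and the fact that $C_1\in\mathsf S_\alpha$ means $C_1=[R_\alpha^p]$ for some nonisotropic~$p$.

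\textbf{The inclusion $\supseteq$.} Let $C_1\in\mathsf S_\alpha$ and $C_2\in\mathsf G_{\alpha^{(2)}}$, and suppose $x\in\overline\mu(C_1\times C_2)\cap\rho(\mathcal E)$, say $x=\overline\mu(A,B)=\rho([AB])$ with $[A]=C_1$, $[B]=C_2$. Since $C_1$ is the class of a special elliptic isometry with parameter~$\alpha$, we may take $A=R_\alpha^{p_1}$ for some $p_1\in\PV\setminus\SV$; since $C_2\in\mathsf G_{\alpha^{(2)}}$, the class $C_2$ admits an $(\alpha,\alpha)$-decomposition, so by item~(\emph{i}) of Proposition~\ref{prop:decomp} the isometry $B$ itself can be written $B=\delta R_\alpha^{p_3}R_\alpha^{p_2}$ for suitable nonisotropic $p_2,p_3$ and $\delta\in\Omega$ (absorbing the cube root of unity as in Proposition~\ref{prop:decomp}(\emph{ii}), or keeping it since $\delta R_\alpha^{p_3}=R_{\delta\alpha}^{p_3}$ and we only need a lift). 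Then $AB=\delta R_\alpha^{p_3}R_\alpha^{p_2}R_\alpha^{p_1}$, so $[AB]$ admits an $\alpha^{(3)}$-decomposition. As $x=\rho([AB])\in\rho(\mathcal E)$, the class $[AB]$ lies in $\mathcal E\cup\mathcal B$ (using that $\rho$ restricted away from $\mathcal E\cup\mathcal B$ — i.e.\ on loxodromic and regular parabolic classes — is injective, so its image meets $\rho(\mathcal E)$ only at boundary classes already in $\mathcal E\cup\mathcal B$); hence $x\in\mathsf E_{\alpha^{(3)}}$ by Notation~\ref{notation:egclasses}.

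\textbf{The inclusion $\subseteq$.} Let $x\in\mathsf E_{\alpha^{(3)}}$, so $x=\rho([F])$ for an elliptic or parabolic isometry $F$ admitting an $\alpha^{(3)}$-decomposition, say (after choosing a lift) $F=\delta R_\alpha^{p_3}R_\alpha^{p_2}R_\alpha^{p_1}$ with $\delta\in\Omega$. Set $C_1:=[R_\alpha^{p_1}]\in\mathsf S_\alpha$ and $R:=\delta R_\alpha^{p_3}R_\alpha^{p_2}$, with $C_2:=[R]$. By construction $C_2$ admits an $(\alpha,\alpha)$-decomposition, i.e.\ $C_2\in\mathsf G_{\alpha^{(2)}}$ — \emph{provided} $C_2$ is semisimple, which is the one point requiring care (see below). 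Granting that, $[F]=[R_\alpha^{p_1}R]\in\widetilde\mu(C_1\times C_2)$, hence $x=\rho([F])\in\overline\mu(C_1\times C_2)$; and $x\in\rho(\mathcal E)$ by assumption, so $x$ belongs to the right-hand side of~\eqref{eq:ea3cup}.

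\textbf{The main obstacle.} The delicate point is ensuring that in the $\subseteq$ direction the middle factor $R=\delta R_\alpha^{p_3}R_\alpha^{p_2}$ can be taken \emph{semisimple}, since $\mathsf G_{\alpha^{(2)}}$ and the product map $\overline\mu$ are defined only for semisimple classes. If $R$ is loxodromic or (regular) elliptic there is nothing to do. If $R$ is parabolic — or if $F$ itself forces a degeneration — one must rearrange the decomposition: using the cyclic-permutation freedom $[F]=[R_\alpha^{p_1}R_\alpha^{p_3}R_\alpha^{p_2}]=[R_\alpha^{p_2}R_\alpha^{p_1}R_\alpha^{p_3}]$ (Proposition~\ref{prop:decomp}(\emph{iii})) one may try each of the three pair-products $R_\alpha^{p_{i+1}}R_\alpha^{p_i}$ as the middle factor, and then invoke Lemma~\ref{lemma:r1parabolic}: since $F$ is elliptic and, unless $[F]=[R_{\overline\alpha^3}^q]$ for $q\in\EV$, that lemma produces a new decomposition $F=\delta R_\alpha^{q_3}R_\alpha^{q_2}R_\alpha^{q_1}$ whose inner product $R_\alpha^{q_2}R_\alpha^{q_1}$ is regular elliptic or loxodromic, hence semisimple, giving the required $C_2\in\mathsf G_{\alpha^{(2)}}$. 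The residual case $[F]=[R_{\overline\alpha^3}^q]$, $q\in\EV$ — a special elliptic isometry with positive center — is handled separately: by Proposition~\ref{prop:specialdecomp} such $F$ admits an $\alpha^{(3)}$-decomposition, and by Proposition~\ref{prop:a1a2eldecomp} one can arrange this decomposition so that its inner product is semisimple (e.g.\ take $R_\alpha^{p_1}=R_\alpha^{p_1}$ and $R=R_{\overline\alpha^3}^q R_\alpha^{p_1}$, which is semisimple for a generic choice of the first center on the relevant curve, or directly write $R_{\overline\alpha^3}^q=\delta'R_\alpha^qR_{\overline\alpha}^q$ and absorb), placing $x$ again in the right-hand side. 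Assembling these cases proves $\subseteq$, and together with the (routine) reverse inclusion establishes~\eqref{eq:ea3cup}.
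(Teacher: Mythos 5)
Your proof follows essentially the same route as the paper's: the inclusion $\supseteq$ is immediate from Notation~\ref{notation:egclasses}, and the inclusion $\subseteq$ comes down to arranging a semisimple middle factor, which is done via Lemma~\ref{lemma:r1parabolic} together with Proposition~\ref{prop:specialdecomp} for the exceptional class $[R_{\overline\alpha^3}^q]$, $q\in\EV$. Two points, however, need attention.

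First, Lemma~\ref{lemma:r1parabolic} is stated only for \emph{elliptic} $F$, whereas $\mathsf E_{\alpha^{(3)}}$ also contains the projections of parabolic classes (ellipto-parabolic classes over the nondiagonal side, unipotent classes over the vertex); your case analysis simply declares that ``$F$ is elliptic'', so these classes are not covered as written. The paper removes them at the outset by invoking Proposition~\ref{prop:parlox3}: its proof produces, for every regular parabolic class, a decomposition coming from a triple with $\tance(p_1,p_2)\notin[0,1]$, so the inner pair is automatically regular elliptic or loxodromic, hence semisimple. (Alternatively, since every point of the nondiagonal side and the vertex of $\rho(\mathcal E)$ is already placed in the right-hand side by your treatment of positive-center special elliptic classes and of the identity, and since $\rho$ sends each parabolic class to such a point, these $F$ contribute nothing new; but some such remark must be made explicitly.) Second, the parenthetical suggestion to ``directly write $R_{\overline\alpha^3}^q=\delta'R_\alpha^qR_{\overline\alpha}^q$'' is false: $R_\alpha^qR_{\overline\alpha}^q=R_{\alpha\overline\alpha}^q=1$, so the right-hand side is a cube root of unity, not a special elliptic isometry. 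This aside should be deleted; your primary route through Proposition~\ref{prop:specialdecomp}, whose proof yields (generically) a relation $R_\alpha^{p_2}R_\alpha^{p_1}=\delta R_{\overline\alpha}^{p_3}R_\beta^q$ with the left-hand pair regular elliptic and hence a legitimate semisimple $C_2$, is the one the paper itself uses.
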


\begin{proof}
By its definition (see Notation~\ref{notation:egclasses}), 
$\mathsf E_{\alpha^{(3)}}$ contains the right side of~\eqref{eq:ea3cup}.

Conversely, by Propositions~\ref{prop:parlox3} and~\ref{prop:specialdecomp}, we need to
prove that given a regular elliptic isometry or a special elliptic isometry 
with negative center $F\in\SU(2,1)$ admitting a decomposition of the form 
$F=\delta R_{\alpha}^{p_3}R_{\alpha}^{p_2}R_{\alpha}^{p_1}$, 
where $R_1:=R_\alpha^{p_2}R_\alpha^{p_1}$ is parabolic and $\delta\in\Omega$, then 
the $\PU(2,1)$-conjugacy class of $F$ lies in an image 
$\overline\mu(C_1\times C_2)$, for some $C_1\in\mathsf S_{\alpha}$ and 
$C_2\in \mathsf G_{\alpha^{(2)}}$. This follows 
directly from Lemma~\ref{lemma:r1parabolic}.
\end{proof}

\begin{lemma}
\label{lemma:reduciblepair}
Let\/ $\alpha\in\mathbb S^1\setminus\Omega$ be a parameter and
consider a reducible pair\/ $(R_{\alpha}^{p_3},R_{\alpha}^{p_2}R_{\alpha}^{p_1})$
such that\/ $R_{\alpha}^{p_3}R_{\alpha}^{p_2}R_{\alpha}^{p_1}$ is regular
elliptic and the points\/ $p_1,p_2,p_3\in\PV\setminus\SV$ do not lie in a same
complex line. Then the image under\/ $\overline\mu$ of such pair is an interior
point of\/ $\mathsf E_{\alpha^{(3)}}$.
\end{lemma}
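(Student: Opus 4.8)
The plan is to produce, starting from the given reducible pair, a one-parameter family of pairs that passes through an \emph{irreducible} pair whose product has the same $\PU(2,1)$-conjugacy class as $F := R_{\alpha}^{p_3}R_{\alpha}^{p_2}R_{\alpha}^{p_1}$; then invoking the second bullet of Proposition~\ref{prop:proptildemu} (the image of an irreducible pair under $\overline\mu$ is an interior point of $\overline\mu(C_1\times C_2)$) together with Corollary~\ref{cor:cuptildemu} finishes the argument. Concretely, write $R_1 := R_{\alpha}^{p_2}R_{\alpha}^{p_1}$, and note that since $(R_{\alpha}^{p_3},R_1)$ is reducible it fixes a common point $v\in\PV$; because $p_1,p_2,p_3$ are not collinear, $v$ is forced to be orthogonal to the plane they span, hence $v$ is a fixed point of each of $R_{\alpha}^{p_1},R_{\alpha}^{p_2},R_{\alpha}^{p_3}$ and therefore of $F$ with eigenvalue $\alpha^3$ (or $\overline\alpha^{\,3}$, depending on $\sigma v$). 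First I would use this to pin down that, since $F$ is regular elliptic and nonspecial, $v$ is nonisotropic; the two remaining eigenvalues of $F$ are then distinct and the two eigenplanes orthogonal to $v$ are available for bending.

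Next I would apply a bending $B(s)$ of $R_1$ (as in \cite[Proposition~4.3]{spell} and the surrounding discussion): this replaces $(p_1,p_2)$ by $(B(s)p_1,B(s)p_2)$ without changing $R_1$, hence without changing $F = R_{\alpha}^{p_3}R_1$. The key point is that $B(s)$ acts nontrivially on the complex line $\mathrm L(p_1,p_2)$ — it moves $p_2$ along a metric circle, hypercycle, or horocycle depending on the type of $R_1$ — while fixing the common fixed point structure only when the bending is trivial. I would argue that for all but finitely many (in fact all but the obvious degenerate) values of $s$, the triple $B(s)p_1, B(s)p_2, p_3$ still fails to be collinear, and the pair $(R_{\alpha}^{p_3}, R_{\alpha}^{B(s)p_2}R_{\alpha}^{B(s)p_1})$ is \emph{irreducible}: a common fixed point would again have to be the polar point of the plane spanned by the three centers, and one checks that a nonzero bending destroys this coincidence (this is exactly the content of Remark~\ref{rmk:nondeggeneral}, read off the tances). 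The crux is verifying that the bending is genuinely nontrivial — i.e. that $R_1$ is not the identity and not a complex reflection fixing $p_3$'s relevant point — which follows because $F$ is regular elliptic, so $R_1$ cannot be scalar, and if $R_1$ were special elliptic with center generating a line through the common fixed point one could redistribute (using $[F]=[R_\alpha^{p_1}R_\alpha^{p_3}R_\alpha^{p_2}]=[R_\alpha^{p_2}R_\alpha^{p_1}R_\alpha^{p_3}]$, cf. the proof of Lemma~\ref{lemma:r1parabolic}) to a cyclic rearrangement where this does not occur.

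Once an irreducible pair with product in $[F]$ is found, Proposition~\ref{prop:proptildemu} gives that $\overline\mu$ of that pair — which equals $\rho([F])$ — is an interior point of $\overline\mu(C_1\times C_2)$ where $C_1 = [R_\alpha^{p_3}]\in\mathsf S_\alpha$ and $C_2 = [R_1]\in\mathsf G_{\alpha^{(2)}}$; intersecting with $\rho(\mathcal E)$ and using Corollary~\ref{cor:cuptildemu} shows $\rho([F])$ is interior in $\mathsf E_{\alpha^{(3)}}$ (interior relative to $\rho(\mathcal E)$, which is what is meant). Since the original reducible pair and the perturbed irreducible one have the same image under $\overline\mu$, we are done. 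I expect the main obstacle to be the bookkeeping in the degenerate cases: ensuring that after the cyclic rearrangement trick one really lands in a situation where a nontrivial bending of $R_1$ exists and moves the configuration off the reducible wall — in other words, handling the possibility that \emph{every} cyclic rearrangement $(R_\alpha^{p_{i}}, R_\alpha^{p_{i-1}}R_\alpha^{p_{i-2}})$ is "rigidly reducible." This is ruled out precisely because $p_1,p_2,p_3$ are assumed non-collinear and $F$ is regular, but writing the case analysis cleanly (paralleling the final paragraph of the proof of Lemma~\ref{lemma:r1parabolic}) is the delicate part.
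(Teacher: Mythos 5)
Your high-level strategy---produce an \emph{irreducible} pair with the same $\overline\mu$-image as the given one and then quote Proposition~\ref{prop:proptildemu} together with Corollary~\ref{cor:cuptildemu}---is exactly the paper's, but both concrete mechanisms you propose to get there break down. The first problem is your reading of reducibility: the pair $(R_\alpha^{p_3},R_1)$, $R_1:=R_\alpha^{p_2}R_\alpha^{p_1}$, being reducible only yields a point $v$ fixed by $R_\alpha^{p_3}$ and by the \emph{product} $R_1$, not by the three reflections individually. The fixed-point set of $R_1$ is strictly larger than the set of common fixed points of $R_\alpha^{p_1}$ and $R_\alpha^{p_2}$ (e.g.\ a regular elliptic $R_1$ fixes two points of $\mathrm L(p_1,p_2)$ that are in general neither the $p_i$ nor orthogonal to them). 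Moreover, since $p_1,p_2,p_3$ are not collinear their representatives span $V$, so no point of $\PV$ is orthogonal to all three; your conclusion that $v$ is orthogonal to ``the plane they span'' and that $F$ has eigenvalue $\alpha^3$ is impossible in the generic case---and, via Proposition~\ref{prop:traceeigen}, it would put $\trace F$ on $\ell_{\alpha^3}$, i.e.\ precisely on the walls of Proposition~\ref{prop:a3walls}, which is the non-generic locus the non-collinearity hypothesis is designed to avoid. The correct consequence, and the one the paper extracts, is merely that $p_3$ is orthogonal to (or is the polar point associated with) \emph{some} fixed point of $R_1$.

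The second problem is fatal to the perturbation step: bending $R_1$ replaces $(p_1,p_2)$ by $(B(s)p_1,B(s)p_2)$ while leaving the isometry $R_1$ itself unchanged, as you yourself note. Hence $(R_\alpha^{p_3},\,R_\alpha^{B(s)p_2}R_\alpha^{B(s)p_1})$ is literally the same pair $(R_\alpha^{p_3},R_1)$ and remains reducible for every $s$; irreducibility is a property of the two group elements, not of the chosen factorization of the second one. The escape has to go through a \emph{different} pair whose product is conjugate to $F$. This is what the paper does: it first passes to the cyclic rearrangement $(R_\alpha^{p_1},\,R_\alpha^{p_3}R_\alpha^{p_2})$, which is irreducible unless $p_1$ is in turn orthogonal to a fixed point of $R_\alpha^{p_3}R_\alpha^{p_2}$; when every rearrangement is still reducible, it reduces---using simultaneous changes of signs and a case analysis of the orthogonal and all-positive configurations (including the degenerate case where $p_3$ is the polar point of $\mathrm L(p_1,p_2)$)---to a strongly regular triple, where \cite[Lemma~5.5]{spell} supplies the required irreducible pair. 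Your proposal is missing both the pivot to a genuinely different pair and this case analysis, so as written it does not prove the lemma.
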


\begin{proof}
Define $L_1:=\mathrm L(p_1,p_2)$, $R_1:=R_{\alpha}^{p_2}R_{\alpha}^{p_1}$,
$R_2:=R_\alpha^{p_3}R_\alpha^{p_2}$, and $F:=R_{\alpha}^{p_3}R_{\alpha}^{p_2}R_{\alpha}^{p_1}$.
Suppose that the pair $(R_{\alpha}^{p_3},R_1)$ is reducible and that
$p_3\notin L_1$. Then $p_3$ is orthogonal to a fixed point of $R_1$.

Note that 
$R_\alpha^{p_1}FR_{\overline\alpha}^{p_1}=R_\alpha^{p_1}R_\alpha^{p_3}R_\alpha^{p_2}$. Then
if $p_1$ is not orthogonal to a fixed point of~$R_2$, the image under $\overline\mu$ of
the irreducible pair $(R_\alpha^{p_1},R_2)$, which by Proposition~\ref{prop:proptildemu}
is an interior point in the image
of~$\overline\mu$, coincides with the image (under a distinct $\overline\mu$) of the pair
$(R_\alpha^{p_3},R_1)$. Hence, we can also assume that $p_1$ is orthogonal to a fixed
point of $R_2$.

Suppose that $p_3$ is not orthogonal to $p_2$.
We will prove that there exists a triple $q_1,q_2,q_3\in\PV\setminus\SV$ of points
not lying in a same complex line such that
$F=R_\alpha^{q_3}R_\alpha^{q_2}R_\alpha^{q_1}$ and~$q_3$ is not orthogonal
to a fixed point of $R_1$, i.e., $(R_\alpha^{q_3},R_\alpha^{q_2}R_\alpha^{q_1})$ is
an irreducible pair. Note that if at most one of the points $p_i$ is positive, then the triple
$p_1,p_2,p_3$ is strongly regular with respect to $\pmb\alpha:=(\alpha,\alpha,\alpha)$,
$\pmb\sigma:=(\sigma p_1,\sigma p_2,\sigma p_3)$, and $\tau:=\trace F$, so the 
result follows from~\cite[Lemma~5.5]{spell}. Thus, we can assume that at least
two of the points $p_1,p_2,p_3$ is positive. Suppose that one of the isometries $R_1,R_2$
is loxodromic. In this case, using simultaneous change of signs (together, if necessary, with
a conjugation that cyclic permutes the points $p_1,p_2,p_3$ as above)
we obtain a strongly regular triple $q_1,q_2,q_3$ with 
$F=R_\alpha^{q_3}R_\alpha^{q_2}R_\alpha^{q_1}$ and the result follows as in the previous case.
Finally, we assume that the points $p_1,p_2,p_3$ are positive.
Note that neither~$R_1$ nor~$R_2$ can be parabolic, since~$F$ is regular elliptic. Then~$R_1$ 
and~$R_2$ are regular elliptic and, if~$a$ is the $R_1$-fixed point
orthogonal to~$p_3$ and~$b$ is the $R_2$-fixed point orthogonal to~$p_1$,
then~$a$ and~$b$ are nonisotropic points with $\langle a,b\rangle=0$. Denote
$L:=\mathrm L(p_1,a)$; then $L=\mathbb Pb^\perp$ and $L=L_1$. It follows
that $p_2$ is orthogonal to $b$ and either $p_3\in L_1$ or $L_1=\mathbb Pp_3^\perp$,
both cases contradicting the hypothesis.

Now, suppose that $\langle p_3,p_2\rangle=0$ and $\langle p_3,p_1\rangle\neq 0$. 
If follows that $p_2$ is a fixed point of $R_2$ which implies that the triple
$p_1,p_2,p_3$ is pairwise orthogonal and the isometry $F$ is not regular elliptic,
a contradiction. (We obtain the same contradiction supposing that 
$\langle p_3,p_1\rangle=0$ and $\langle p_3,p_2\rangle\neq 0$.)

Finally, assume $\langle p_3,p_2\rangle=\langle p_3,p_1\rangle=0$, i.e.,
$p_3$ is the polar point of the line $L_1$. By
Proposition~\cite[Corollary~3.7]{spell}, since $F$ is regular elliptic, the points
$p_1,p_2,p_3$ cannot be pairwise orthogonal. 
In this case, we cannot bend the decomposition of $F$. But, if $q$ is any 
point in the line $\mathrm L(p_2,p_3)$ with
$\sigma q_2=\sigma p_2$, and $\tilde q$ is the point in $\mathrm L(p_2,p_3)$
orthogonal to $q_2$, we have
$R_{\alpha}^{\tilde q}R_{\alpha}^{q}=R_\alpha^{p_3}R_{\alpha}^{p_2}=
R_{\overline\alpha}^c$,
where $c$ is the polar point of the line $\mathrm L(p_2,p_3)$.
So, we can without loss of generality assume that
$\langle \tilde q, p_1\rangle\neq 0$, and proceed as in the previous paragraph.
\end{proof}

\begin{prop}
\label{prop:a3walls}
Let\/ $\alpha\in\mathbb S^1\setminus\Omega$ be a parameter. The set\/
$\mathsf E_{\alpha^{(3)}}$ is given by the union of closed chambers in\/ 
$\rho(\mathcal E)$ delimited by\/ 
$\alttrace^{-1}(\ell_{\alpha^3}\cup\ell_{\omega\alpha^3}\cup
\ell_{\omega^2\alpha^3})\subset\mathsf T$. Each of these chambers is either full 
or empty.
\end{prop}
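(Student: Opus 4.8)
The statement has two parts: first, that $\mathsf E_{\alpha^{(3)}}$ equals a union of the closed chambers cut out in $\rho(\mathcal E)$ by the curve $\alttrace^{-1}(\ell_{\alpha^3}\cup\ell_{\omega\alpha^3}\cup\ell_{\omega^2\alpha^3})$; and second, that each such chamber is either entirely contained in $\mathsf E_{\alpha^{(3)}}$ or disjoint from its interior. The natural route is to feed Corollary~\ref{cor:cuptildemu} into the structural results on the product map from Proposition~\ref{prop:proptildemu}. By Corollary~\ref{cor:cuptildemu}, $\mathsf E_{\alpha^{(3)}}=\bigcup_{C_1\in\mathsf S_\alpha,\,C_2\in\mathsf G_{\alpha^{(2)}}}\overline\mu(C_1\times C_2)\cap\rho(\mathcal E)$. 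The plan is to analyze this union one factor at a time: fix $C_1\in\mathsf S_\alpha$ (a special elliptic class of parameter $\alpha$) and $C_2\in\mathsf G_{\alpha^{(2)}}$ a semisimple class with an $\alpha^{(2)}$-decomposition, apply the last two bullets of Proposition~\ref{prop:proptildemu} to $\overline\mu(C_1\times C_2)$, and then take the union over all such pairs.

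\textbf{Step 1: locating the reducible walls.} For a fixed pair $(C_1,C_2)$ with $C_1\in\mathsf S_\alpha$, Proposition~\ref{prop:proptildemu} says the reducible walls of $\overline\mu(C_1\times C_2)$ meet $\rho(\mathcal E)$ in finitely many segments of slopes $-1,\tfrac12,2$. The key point is that these walls, intersected with $\mathcal E$, must land inside $\alttrace^{-1}(\ell_{\alpha^3}\cup\ell_{\omega\alpha^3}\cup\ell_{\omega^2\alpha^3})$. Indeed, a reducible pair $(A,B)$ with $A=R_\alpha^{p}$ generates a group fixing a point $c\in\PV$; then $c$ is a common eigenvector, and since $A$ acts on $c^\perp$-complement by scaling by $\alpha$ (up to the cube-root ambiguity) while $B=R_\alpha^{p_2}R_\alpha^{p_1}$ with all parameters $\alpha$ also has $\alpha$ or a related power as an eigenvalue along a suitable line, the product $AB$ has $\alpha^3$ (up to $\Omega$) as an eigenvalue — so $\trace(AB)\in\ell_{\alpha^3}\cup\ell_{\omega\alpha^3}\cup\ell_{\omega^2\alpha^3}$ by Proposition~\ref{prop:traceeigen}, i.e. the class of $AB$ lies in $\alttrace^{-1}$ of this union when it is elliptic. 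I would make this precise by noting that a reducible triple $R_\alpha^{p_3}R_\alpha^{p_2}R_\alpha^{p_1}$ with $p_1,p_2,p_3$ collinear fixes the polar point of their common line with eigenvalue $\alpha^3$ (up to $\delta\in\Omega$), exactly as in the proof of Lemma~\ref{lemma:ellinverse}; and the remaining reducible configurations (one $p_i$ orthogonal to a fixed point of the product of the other two) likewise force $\alpha^3\delta$, $\delta\in\Omega$, to be an eigenvalue. Conversely, by Lemma~\ref{lemma:walls} every segment of $\alttrace^{-1}(\ell_{\alpha^3}\cup\ell_{\omega\alpha^3}\cup\ell_{\omega^2\alpha^3})$ is realized by such reducible configurations. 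Hence the union, over all pairs $(C_1,C_2)$, of the reducible walls restricted to $\rho(\mathcal E)$ is precisely $\alttrace^{-1}(\ell_{\alpha^3}\cup\ell_{\omega\alpha^3}\cup\ell_{\omega^2\alpha^3})$.

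\textbf{Step 2: the chamber dichotomy.} Granting Step~1, the curve $\Lambda:=\alttrace^{-1}(\ell_{\alpha^3}\cup\ell_{\omega\alpha^3}\cup\ell_{\omega^2\alpha^3})$ subdivides $\rho(\mathcal E)$ into finitely many closed chambers. I claim $\mathsf E_{\alpha^{(3)}}$ is a union of closures of these chambers. Take any chamber $\mathsf K$; I must show either $\mathsf K\subset\mathsf E_{\alpha^{(3)}}$ or $\mathsf K^\circ\cap\mathsf E_{\alpha^{(3)}}=\varnothing$. Suppose $\mathsf K^\circ$ meets $\mathsf E_{\alpha^{(3)}}$ at a class $[F]$ of a regular elliptic isometry; by Corollary~\ref{cor:cuptildemu} and Lemma~\ref{lemma:r1parabolic} we may assume $F=\delta R_\alpha^{p_3}R_\alpha^{p_2}R_\alpha^{p_1}$ with $R_\alpha^{p_2}R_\alpha^{p_1}$ regular elliptic or loxodromic (so $[F]\in\overline\mu(C_1\times C_2)$ with $C_1=[R_\alpha^{p_3}]\in\mathsf S_\alpha$, $C_2=[R_\alpha^{p_2}R_\alpha^{p_1}]$ semisimple). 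Since $[F]$ is in the interior of $\mathsf K$, it lies off every reducible wall, so by Step~1 the pair $(R_\alpha^{p_3},R_\alpha^{p_2}R_\alpha^{p_1})$ is irreducible (if the triple were collinear the class would lie on $\Lambda$; the other reducible configurations also land on $\Lambda$). By the second bullet of Proposition~\ref{prop:proptildemu} — and Lemma~\ref{lemma:reduciblepair} handles exactly the borderline reducible-but-interior case — $[F]$ is an interior point of $\overline\mu(C_1\times C_2)\subset\mathsf E_{\alpha^{(3)}}$. Now $\overline\mu(C_1\times C_2)$ is closed (first bullet) and its intersection with $\rho(\mathcal E)$ is a union of chambers-with-boundary bounded by reducible walls lying in $\Lambda$; since it contains an interior point of $\mathsf K$ and $\mathsf K$ is one of the atomic chambers, it contains all of $\mathsf K$. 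Hence $\mathsf K\subset\mathsf E_{\alpha^{(3)}}$. Running this over all chambers gives that $\mathsf E_{\alpha^{(3)}}$ is exactly the union of the full chambers, and each chamber is full or empty as claimed. The parabolic/special-elliptic boundary classes on $\Lambda\cap\rho(\mathcal B)$ are then swept up by Propositions~\ref{prop:parlox3} and~\ref{prop:specialdecomp} together with the closedness of $\mathsf E_{\alpha^{(3)}}$ inherited from properness of $\overline\mu$.

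\textbf{Main obstacle.} The delicate point is Step~1 together with the fine bookkeeping that the reducible walls of the \emph{individual} images $\overline\mu(C_1\times C_2)$, as $C_1$ ranges over $\mathsf S_\alpha$ and $C_2$ over $\mathsf G_{\alpha^{(2)}}$, fill out \emph{all} of $\Lambda$ and no more — i.e. that the chamber decomposition induced by the union of all images coincides with the one induced by $\Lambda$. This needs the realization statement that every segment in Lemma~\ref{lemma:walls} arises from a genuine reducible triple of centers with common parameter $\alpha$ whose product is semisimple, which is essentially a matter of chasing the tance/collinearity conditions and checking the signature constraints; and it needs that no reducible pair produces an elliptic class off $\Lambda$, which follows from the eigenvalue argument via Proposition~\ref{prop:traceeigen}. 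I expect the signature/collinearity case analysis (mirroring the cases in the proof of Lemma~\ref{lemma:r1parabolic} and Lemma~\ref{lemma:reduciblepair}) to be where the real work sits; the topological "closed image $\Rightarrow$ union of atomic chambers" argument is then formal.
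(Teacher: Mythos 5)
Your Step 1 contains a genuine error, and it is the load-bearing step. You claim that \emph{every} reducible wall of \emph{every} image $\overline\mu(C_1\times C_2)$, with $C_1\in\mathsf S_\alpha$ and $C_2\in\mathsf G_{\alpha^{(2)}}$, lands inside $\Lambda:=\alttrace^{-1}(\ell_{\alpha^3}\cup\ell_{\omega\alpha^3}\cup\ell_{\omega^2\alpha^3})$, via the assertion that a common fixed point forces $\delta\alpha^3$ to be an eigenvalue of the product. That is false. Write $B=R_\alpha^{p_2}R_\alpha^{p_1}$; its eigenvalues are $\alpha^2$ (at the polar point of $\mathrm L(p_1,p_2)$) together with two eigenvalues $\mu,\alpha^{-2}\mu^{-1}$ attached to fixed points inside $\mathrm L(p_1,p_2)$. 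If the common fixed point $c$ of the pair $(R_\alpha^{p_3},B)$ lies in $\mathrm L(p_1,p_2)$ (so $p_3$ is orthogonal to a fixed point of $B$ but $p_3\notin\mathrm L(p_1,p_2)$), the product has eigenvalue $\alpha\mu$ or $\alpha^{-1}\mu^{-1}$ at $c$, which is not $\delta\alpha^3$ in general; similarly if $c=p_3$ one gets $\alpha^{-2}\lambda$. Only the collinear configuration ($c$ the polar point of $\mathrm L(p_1,p_2)$ with $p_3$ in that line) yields eigenvalue $\alpha^3$. So the reducible walls of the individual images genuinely sit off $\Lambda$, and your conclusion that ``the union of the reducible walls restricted to $\rho(\mathcal E)$ is precisely $\Lambda$'' does not hold. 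What is true — and what Lemma~\ref{lemma:reduciblepair} is actually for — is that the images of those non-collinear reducible pairs, while they may be boundary points of a single $\overline\mu(C_1\times C_2)$, are \emph{interior} points of the union $\mathsf E_{\alpha^{(3)}}$ (one re-decomposes $F$, or cyclically permutes and changes signs, to exhibit the same class as the image of an irreducible pair for a different choice of $(C_1,C_2)$). Your parenthetical ``Lemma~\ref{lemma:reduciblepair} handles the borderline case'' is inconsistent with your Step 1: if Step 1 were true there would be no such case, and since Step 1 is false, the lemma is not a borderline patch but the main mechanism. Your Step 2 inherits the problem: an individual image $\overline\mu(C_1\times C_2)$ is \emph{not} a union of the atomic chambers cut by $\Lambda$, so ``it contains an interior point of $\mathsf K$, hence all of $\mathsf K$'' fails for a single image; you must work with the full union.

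There is a second, smaller omission. The union in Corollary~\ref{cor:cuptildemu} runs over infinitely many classes $C_2$, so a boundary point of $\mathsf E_{\alpha^{(3)}}$ need not lie on the boundary of any single image: it could be a limit of wall points of a sequence $\overline\mu(C_1\times C_{2,n})$ with $C_{2,n}$ degenerating (e.g.\ to a parabolic class no longer in $\mathsf G_{\alpha^{(2)}}$). The paper handles this with a compactness argument: any convergent sequence of points lying on reducible walls of $\overline\mu(C_{1,n}\times C_{2,n})$ has its limit either interior to $\mathsf E_{\alpha^{(3)}}$, or on a reducible wall of some $\overline\mu(C_1\times C_2)$ with $C_2$ semisimple, or (via Lemma~\ref{lemma:r1parabolic} when the limiting $C_2$ is parabolic) in $\Lambda$. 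Your write-up skips this entirely. The correct skeleton is: (a) the limit argument reduces the boundary of the union to reducible walls of honest pairs plus $\Lambda$; (b) Lemma~\ref{lemma:reduciblepair} kills the non-collinear reducible walls; (c) collinear configurations land in $\Lambda$ by Lemma~\ref{lemma:ellinverse}/Proposition~\ref{prop:traceeigen}; (d) the full-or-empty dichotomy then follows from Proposition~\ref{prop:proptildemu}. Your (c) and (d) are fine; (a) is missing and (b) is replaced by a false claim.
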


\begin{proof}
We will show that every convergent sequence $x_n\in\rho(\mathcal E)$ such that,
for all~$n$,
$x_n$ lies in a reducible wall of $\overline\mu(C_{1,n}\times C_{2,n})$, 
for some sequences of classes $C_{1,n}\in\mathsf S_{\alpha}$ and 
$C_{2,n}\in\mathsf G_{\alpha^{(2)}}$,
converges either to an interior point of $\mathsf E_{\alpha^{(3)}}$ or to
a point lying in a reducible wall of $\overline\mu(C_1\times C_2)$, for some
semisimple classes $C_1\in\mathsf S_{\alpha}$ and 
$C_2\in \mathsf G_{\alpha^{(2)}}$. Hence, the first part of the 
result follows from Lemma~\ref{lemma:reduciblepair}.

By Corollary~\ref{cor:cuptildemu} (and remembering that $\mathsf S_{\alpha}$ consists of only 
two elements),
by considering a subsequence if necessary, 
we can assume that there exists $p_3\in\PV\setminus\SV$ such that 
$C_{1,n}=[R_\alpha^{p_3}]=:C_1$, for all $n$.

We can also assume that $\rho(C_{2,n})\in\rho(\mathcal E)$,
for all $n$. In fact, let $F\in\SU(2,1)$ be a regular elliptic isometry that admits a
decomposition $F=R_\alpha^{p_3}R_\alpha^{p_2}R_\alpha^{p_1}$, for points
$p_1,p_2,p_3\in\PV\setminus\SV$, such that $A:=R_\alpha^{p_2}R_\alpha^{p_1}$
is loxodromic and the pair $(R_\alpha^{p_3},A)$ is reducible.
Then either $p_3\in\mathrm L(p_1,p_2)$ or $p_3$ is orthogonal to an isotropic fixed
point $v$ of $A$. 
But, if $\langle p_3,v\rangle=0$, then $R_{\alpha}^{p_3}$ also fixes $v$
and, therefore, $F$ is not regular elliptic. Hence, 
$p_3\in\mathrm L(p_1,p_2)$ and, by Lemma~\ref{lemma:ellinverse},
$\alttrace F\in\ell_{\delta\alpha^3}$ for some $\delta\in\Omega$.

So, we have a convergent sequence of points $x_n\in\mathcal E^{\text{reg}}
\subset\rho(\mathcal E)$, each one 
lying in a reducible wall of $\overline\mu(C_1\times C_{2,n})$, where 
$C_{2,n}\in\mathsf G_{\alpha^{(2)}}$ is such that $\rho(C_{2,n})\in\rho(\mathcal E)$. 
Since $\rho(\mathcal E)$ is compact in $c(\mathcal G)$, the sequence $x_n$ converges
to a point $x\in\rho(\mathcal E)$. As 
$\mathcal E\cup\mathcal B$ is 
compact in $\mathcal G$, the sequence $C_{2,n}$ has a subsequence converging to
a class $C_2\in\mathcal E\cup\mathcal B$. If $C_2\notin\mathsf G_{\alpha^{(2)}}$ (note that
$\mathsf G_{\alpha^{(2)}}$ is not closed in $\mathcal G$, see for instance
Proposition~\ref{prop:a1a2eldecomp}), then $C_2$
corresponds to a parabolic class admitting a $\alpha^{(2)}$-decomposition. In this case, 
by Lemma~\ref{lemma:r1parabolic}, every point in $\overline\mu(C_1\times C_2)$ is either
interior in $\mathsf E_{\alpha^{(3)}}$ or lie in $\alttrace^{-1}(\ell_{\alpha^3}\cup\ell_{\omega\alpha^3}\cup\ell_{\omega^2\alpha^3})\subset\rho(\mathcal E)$.
If $C_2\in\mathsf G_{\alpha^{(2)}}$, then $x$ lies in a reducible wall 
of $\overline\mu(C_1\times C_2)$.

The second part follows from Proposition~\ref{prop:proptildemu}.
\end{proof}

By the above proposition, if we consider $\rho(\mathcal E)$ divided into chambers
by the set given by $\alttrace^{-1}(\ell_{\alpha^3}\cup\ell_{\omega\alpha^3}\cup
\ell_{\omega^2\alpha^3})$, i.e., $\rho(\mathcal E)$ divided by the segments
given in Lemma~\ref{lemma:walls},
we obtain the region $\mathsf E_{\alpha^{(3)}}$ by finding which of this
chambers are full/empty. Figure~\ref{fig:alttracetang} gives us an idea
of how these chambers may look like.

\subsection{Deciding which chambers are full}
\label{subsec:fullchambers}
Here we present tools to determine which of the chambers of 
$\mathsf E_{\alpha^{(3)}}$ are full/empty, for a given parameter
$\alpha\in\mathbb S^1\setminus\Omega$.

\begin{prop}
\label{prop:nondiag}
Let\/ $\alpha\in\mathbb S^1\setminus\Omega$ be a parameter and consider\/ 
$\mathsf E_{\alpha^{(3)}}$ decomposed in the union of chambers defined by\/ 
$\alttrace^{-1}(\ell_{\alpha^3}\cup\ell_{\omega\alpha^3}\cup
\ell_{\omega^2\alpha^3})\subset\mathsf T$.
Then, every chamber that contain an open segment of the nondiagonal side of\/ 
$\rho(\mathcal E)$ in its closure is full.
\end{prop}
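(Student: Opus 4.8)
The goal is to show every chamber of $\mathsf E_{\alpha^{(3)}}$ touching an open arc of the nondiagonal side is full. The strategy is to produce, for such a chamber $K$, an explicit irreducible pair $(R_\alpha^{p_3},A)$, with $A=R_\alpha^{p_2}R_\alpha^{p_1}$ and $[A]\in\mathsf G_{\alpha^{(2)}}$, whose image under $\overline\mu$ lies in the interior of $K$; by Proposition~\ref{prop:proptildemu} this image is an interior point of $\mathsf E_{\alpha^{(3)}}$, and since a chamber is entirely full or entirely empty, $K$ is full. First I would recall that $\mathsf E_{\alpha^{(3)}}$ contains the whole nondiagonal side of $\rho(\mathcal E)$ (from the identical class and Proposition~\ref{prop:parlox3}, or from Proposition~\ref{prop:specialdecomp} for the special elliptic classes with positive center), so every point of every such open arc already belongs to $\mathsf E_{\alpha^{(3)}}$. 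The content of the statement is that the \emph{interior} of the adjacent chamber is full, i.e.\ that the side is not merely a boundary limit of empty chambers.

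The key step is a continuity/deformation argument near the nondiagonal side. A class on the open nondiagonal side is the class of an ellipto-parabolic isometry, which by Lemma~\ref{lemma:euclideanline} admits an $(\alpha,\alpha)$-decomposition $R_\alpha^{q_2}R_\alpha^{q_1}$ with $\mathrm L(q_1,q_2)$ Euclidean; moreover, by Proposition~\ref{prop:a1a2eldecomp} and Remark~\ref{rmk:a1a2props}, through any such ellipto-parabolic class there passes (the projection of) a curve in $\mathsf E_{\alpha,\alpha}$ running transversally into $\mathcal E^{\mathrm{reg}}$ — concretely the curves $\widetilde\gamma$ built in the proof of Proposition~\ref{prop:a1a2eldecomp}. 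Pick a class $C_2=[A]$ on such a curve, lying in $\mathcal E^{\mathrm{reg}}$ and close to the side, together with $C_1=[R_\alpha^{p_3}]\in\mathsf S_\alpha$. One then shows that for a suitable choice of $p_3$ the pair $(R_\alpha^{p_3},A)$ is irreducible and the product $R_\alpha^{p_3}A$ is regular elliptic with $[R_\alpha^{p_3}A]$ landing in the interior of the chamber $K$ adjacent to the chosen arc. Irreducibility is arranged by choosing $p_3$ generic — not in $\mathrm L(q_1,q_2)$ and not orthogonal to any fixed point of $A$ (a nonempty open condition); that the product is regular elliptic and stays in $\rho(\mathcal E)$ follows because, as $p_3$ varies continuously, the trace moves continuously and we can push it slightly off $\ell_{\delta\alpha^3}$ into $\Delta^\circ$ while remaining elliptic, using the trace formulas and the characterizations from Section~\ref{sec:hypgeo}.

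The main obstacle, I expect, is verifying that the interior point we produce actually lands in the chamber $K$ that touches the \emph{given} arc, rather than in some other chamber. This is where the precise combinatorics of Lemma~\ref{lemma:walls} and Corollary~\ref{cor:imageella} — the slopes $-1,\tfrac12,2$ of the walls $\alttrace^{-1}(\ell_{\alpha^3}\cup\ell_{\omega\alpha^3}\cup\ell_{\omega^2\alpha^3})$ and the way they meet the nondiagonal side of $\mathsf T$ — must be combined with the local description of $\mathsf E_{\alpha,\alpha}$ near that side given by Proposition~\ref{prop:a1a2eldecomp}. One checks that the curve $\widetilde\gamma$ emanating from the chosen ellipto-parabolic class enters a chamber adjacent to the arc (not along a wall), so a point on it close enough to the side lies in $K^\circ$; then the argument above promotes that point to an interior point of $\mathsf E_{\alpha^{(3)}}$, forcing $K$ to be full.

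Finally, one should note the boundary cases: if the arc abuts the vertex of $\rho(\mathcal E)$ or the endpoints $a=0,\tfrac{2\pi}{3}$ are approached, the chamber structure degenerates (the walls become medians, cf.\ Corollary~\ref{cor:imageella}), but since we work with a fixed $\alpha\in\mathbb S^1\setminus\Omega$ and an \emph{open} subsegment of the side bounded away from such degeneracies, the construction goes through uniformly. This completes the plan.
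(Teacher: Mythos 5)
Your first paragraph already contains everything needed, and it is essentially the paper's proof: by Propositions~\ref{prop:parlox3} and~\ref{prop:specialdecomp}, every class lying over the open nondiagonal side (ellipto-parabolic, or special elliptic with positive center) admits an $\alpha^{(3)}$-decomposition, so the whole side is contained in $\mathsf E_{\alpha^{(3)}}$. Your worry that the side might be ``merely a boundary limit of empty chambers'' is settled immediately by Proposition~\ref{prop:a3walls}: $\mathsf E_{\alpha^{(3)}}$ \emph{equals} a union of full closed chambers, and a point of the open arc not lying on a wall (the walls meet the nondiagonal side in finitely many points, by Lemma~\ref{lemma:walls}) belongs to exactly one closed chamber, namely the adjacent one $K$; since that point is in $\mathsf E_{\alpha^{(3)}}$, the chamber $K$ must be full. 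No interior point of $K$ and no irreducible pair needs to be produced.

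The constructive detour you propose instead has two genuine problems. First, it rests on the claim that \emph{every} ellipto-parabolic class on the nondiagonal side admits an $(\alpha,\alpha)$-decomposition with $\mathrm L(q_1,q_2)$ Euclidean, and that a curve of $\mathsf E_{\alpha,\alpha}$ passes through any such class. This is false: by Proposition~\ref{prop:a1a2loxpardecomp} a regular parabolic class admits an $(\alpha,\alpha)$-decomposition only if its trace lies (up to $\Omega$) on $\ell_{\alpha^2}$, and correspondingly $\mathsf E_{\alpha,\alpha}$ meets the nondiagonal side in only finitely many points (Proposition~\ref{prop:a1a2eldecomp}), so there is no such curve through a generic point of the arc. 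Second, the step you yourself flag as the main obstacle --- that $[R_\alpha^{p_3}A]$ lands in the interior of the \emph{given} chamber $K$ --- is never carried out, and the sketch offered conflates the curve $\widetilde\gamma$ of classes $[A]$ (two-fold products) with the location of the three-fold product $[R_\alpha^{p_3}A]$; these are different points of $\rho(\mathcal E)$, and nothing in the argument forces the latter to track the former into $K$. As written, the proposal therefore does not close, even though the short argument it opens with does.
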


\begin{proof}
As the nondiagonal side of $\rho(\mathcal E)$ corresponds to the classes of
special elliptic isometries with positive center and to the classes of
ellipto-parabolic isometries, the result follows from 
Propositions~\ref{prop:parlox3}, \ref{prop:specialdecomp} 
and~\ref{prop:a3walls}.
\end{proof}

\begin{prop}
\label{prop:diagchamb}
Let\/ $0<\theta<2\pi$ be such that\/ $(\theta,\theta)$ does not lie in\/
$\alttrace^{-1}
(\ell_{\alpha^3}\cup\ell_{\omega\alpha^3}\cup\ell_{\omega^2\alpha^3})$,
and let\/ $\beta:=e^{\frac{\theta}{3}i}$. 
Then the chamber of\/ $\mathsf E_{\alpha^{(3)}}$ containing\/ 
$(\theta,\theta)$ in its closure is full 
iff\/ $\mathsf E_{\alpha,\alpha}^{--}\cup\mathsf E_{\alpha,\alpha}^{++}$
intersects\/ $\mathsf E_{\beta,\overline\alpha}^{--}\cup
\mathsf E_{\beta,\overline\alpha}^{-+}$.
\end{prop}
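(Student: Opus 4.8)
The plan is to reduce the statement to a question about decompositions of the single special elliptic isometry $R_\beta^p$ with $p\in\BV$, and then to argue along the lines of the proof of Proposition~\ref{prop:specialdecomp}. First I would fix $p\in\BV$, so that $(\theta,\theta)$ is the $\PU(2,1)$-conjugacy class of $R_\beta^p$ (Subsection~\ref{subsec:puclasses}). Since $(\theta,\theta)$ avoids $\alttrace^{-1}(\ell_{\alpha^3}\cup\ell_{\omega\alpha^3}\cup\ell_{\omega^2\alpha^3})$ --- the union of walls dividing $\mathsf E_{\alpha^{(3)}}$ into chambers by Proposition~\ref{prop:a3walls} --- it lies in the closure of a unique chamber $\mathsf C$; as $\mathsf E_{\alpha^{(3)}}$ is a union of closed chambers, $\mathsf C$ is full iff $(\theta,\theta)\in\mathsf E_{\alpha^{(3)}}$, i.e. iff $R_\beta^p$ admits an $\alpha^{(3)}$-decomposition. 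I would also record that $[R_\beta^p]\neq[R_{\overline\alpha^3}^q]$ for $q\in\EV$ (negative versus positive center), so Lemma~\ref{lemma:r1parabolic} applies to decompositions of $R_\beta^p$.

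For the implication ``intersection nonempty $\Rightarrow$ $\mathsf C$ full'', I would take $D$ in $(\mathsf E_{\alpha,\alpha}^{--}\cup\mathsf E_{\alpha,\alpha}^{++})\cap(\mathsf E_{\beta,\overline\alpha}^{--}\cup\mathsf E_{\beta,\overline\alpha}^{-+})$. By the definition of $\mathsf E_{\beta,\overline\alpha}^{-\pm}$ and the $\SU(2,1)$-homogeneity of $\BV$, after a conjugation there is $p_3\in\PV\setminus\SV$ with $H:=R_{\overline\alpha}^{p_3}R_\beta^p$ projecting to $D$, so $R_\beta^p=R_\alpha^{p_3}H$; it then suffices to show that $H$ itself admits an $\alpha^{(2)}$-decomposition. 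When $D\in\mathcal E^{\mathrm{reg}}$, $\rho$ is injective there, so the class of $H$ is the regular elliptic class of $\mathsf E_{\alpha,\alpha}^{--}\cup\mathsf E_{\alpha,\alpha}^{++}$ and one invokes Proposition~\ref{prop:decomp}. When $D\in\rho(\mathcal B)$, $D$ is necessarily an endpoint of one of the segments making up $\mathsf E_{\alpha,\alpha}^{--}\cup\mathsf E_{\alpha,\alpha}^{++}$, and by Proposition~\ref{prop:a1a2eldecomp} the isometry $H$ is then either special elliptic with negative center --- and $[H]$ is the corresponding endpoint class, which admits an $\alpha^{(2)}$-decomposition --- or ellipto-parabolic, in which case $\trace H$ lies in $\ell_{\alpha^2}$ up to a cube root of unity (since $\mathsf E_{\alpha,\alpha}^{--}\cup\mathsf E_{\alpha,\alpha}^{++}\subseteq\alttrace^{-1}(\ell_{\alpha^2}\cup\ell_{\omega\alpha^2}\cup\ell_{\omega^2\alpha^2})$) and differs from $\tau_{\alpha,\alpha}(0)$ up to a cube root of unity (as $\tau_{\alpha,\alpha}(0)$ is the $\alttrace$-value of $\mathsf E_{\alpha,\alpha}^{+-}\neq D$), so Proposition~\ref{prop:a1a2loxpardecomp} applies. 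In every case $R_\beta^p=R_\alpha^{p_3}H$ is an $\alpha^{(3)}$-decomposition.

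For the converse I would start from such a decomposition and, using Corollary~\ref{cor:cuptildemu} together with $\rho$-injectivity on the diagonal side, write $R_\beta^p=R_\alpha^{p_3}H$ with $H$ semisimple and $[H]\in\mathsf G_{\alpha^{(2)}}$; then $[H]\in\mathsf E_{\beta,\overline\alpha}^{--}\cup\mathsf E_{\beta,\overline\alpha}^{-+}$ automatically, and if $[H]$ is elliptic also $[H]\in\mathsf E_{\alpha,\alpha}^{--}\cup\mathsf E_{\alpha,\alpha}^{++}\cup\mathsf E_{\alpha,\alpha}^{+-}$. If $[H]\in\mathsf E_{\alpha,\alpha}^{--}\cup\mathsf E_{\alpha,\alpha}^{++}$, the intersection is nonempty and we are done. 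Otherwise ($H$ loxodromic, or $[H]=\mathsf E_{\alpha,\alpha}^{+-}$) I would deform the decomposition $R_\beta^p=\delta R_\alpha^{p_3}R_\alpha^{p_2}R_\alpha^{p_1}$ by bendings and cyclic permutations --- exactly as in the proofs of Lemma~\ref{lemma:r1parabolic} and Proposition~\ref{prop:a3walls}, via Remarks~\ref{rmk:nondeggeneral} and~\ref{rmk:rtilde} --- until one of the partial products $R_\alpha^{p_2}R_\alpha^{p_1}$, $R_\alpha^{p_3}R_\alpha^{p_2}$, $R_\alpha^{p_1}R_\alpha^{p_3}$ becomes regular elliptic; since $\mathsf E_{\alpha,\alpha}^{+-}$ contains no regular elliptic class, such a partial product automatically has its two centers of equal signature, and reading it off (after a cyclic conjugation) exhibits a class of the intersection inside $\mathcal E^{\mathrm{reg}}$. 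The configurations admitting no such deformation force $p_1,p_2,p_3$ to be collinear --- whence $\alttrace(R_\beta^p)\in\ell_{\delta\alpha^3}$ for some $\delta\in\Omega$ by Lemma~\ref{lemma:ellinverse}, contradicting the hypothesis --- or pairwise orthogonal, i.e. $R_\beta^p=1$, contradicting $0<\theta<2\pi$; the single leftover possibility, $[H]=\mathsf E_{\alpha,\alpha}^{+-}$ realized by $H\sim R_{\overline\alpha}^{q}$, forces $\trace R_\beta^p\in\ell_1$ up to a cube root of unity, hence $\beta^{-2}\in\Omega$, i.e. $\theta=\pi$ and $R_\beta^p$ is the negative-center involution, which I would treat directly as in~\cite{Will2017}.

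The step I expect to be the main obstacle is this closing case analysis in the converse --- eliminating the loxodromic-complement and $\mathsf E_{\alpha,\alpha}^{+-}$ possibilities, and dually the $\rho(\mathcal B)$ sub-cases of the first implication --- where one must track precisely which of the (at most two) isometries over a point of $\rho(\mathcal E)$ is being produced and must pin $\beta$ to a short list of exceptional values by intersecting the tangent lines $\ell_{\alpha^2},\ell_{\beta\overline\alpha},\dots$ with $\partial\Delta$ and with one another; this is the same bookkeeping already carried out in the proof of Proposition~\ref{prop:specialdecomp}.
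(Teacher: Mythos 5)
Your strategy is the same as the paper's: the forward implication is obtained by lifting a point of the intersection to an actual relation $R_\alpha^{p_2}R_\alpha^{p_1}=\delta R_{\overline\alpha}^{p_4}R_\beta^{p_3}$ with $p_3\in\BV$ (the paper compresses this to ``arguing as in the proof of Proposition~\ref{prop:specialdecomp}''), and the converse is read off an $\alpha^{(3)}$-decomposition of $R_\beta^p$; the paper's own write-up is two short paragraphs, so most of your case analysis is content the paper leaves implicit, and in outline it is sound.

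Two of your added steps need repair, though both are patchable with tools you already invoke. (i) In the forward direction over $\rho(\mathcal B)$, the dichotomy ``$H$ is special elliptic with negative center or ellipto-parabolic'' omits the positive-center special elliptic class that $\mathsf E_{\beta,\overline\alpha}^{-+}$ realizes when it degenerates to a point (i.e.\ when $\beta\in\Omega\overline\alpha$); also, when $H$ is ellipto-parabolic you do not need Proposition~\ref{prop:a1a2loxpardecomp} at all, since by Proposition~\ref{prop:a1a2eldecomp} the class realized at a nondiagonal vertex of $\mathsf E_{\alpha,\alpha}^{--}\cup\mathsf E_{\alpha,\alpha}^{++}$ is itself ellipto-parabolic, hence equals $[H]$. (ii) In the converse, deferring $\theta=\pi$ to~\cite{Will2017} does not literally work: there only $\beta^3=-1$, while $\alpha$ remains arbitrary, and \cite{Will2017} concerns products of involutions. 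Neither repair requires bendings. Writing $R_\beta^p=\delta R_\alpha^{p_3}R_\alpha^{p_2}R_\alpha^{p_1}$, not all three pairs $(p_i,p_j)$ are orthogonal (else the product is the identity). If a nonorthogonal pair has mixed signatures, its partial product is loxodromic (its tance is negative, so the trace leaves $\Delta$); and any loxodromic partial product has trace on $\ell_{\alpha^2}\cap\ell_{\delta'\overline\alpha\beta}$ outside $\Delta$, which by Proposition~\ref{prop:traceeigen} (two distinct tangent lines meet only in $\Delta$) forces $\beta\in\Omega\alpha^3$ and hence $\trace R_\beta^p\in\ell_{\delta''\alpha^3}$, contradicting the hypothesis that $(\theta,\theta)$ avoids the walls. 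So after a cyclic permutation $R_\alpha^{p_2}R_\alpha^{p_1}$ is elliptic or parabolic with $\sigma p_1=\sigma p_2$, which is exactly the intersection point you want; this disposes of your loxodromic and $\mathsf E_{\alpha,\alpha}^{+-}$ leftovers, including $\theta=\pi$, in one stroke.
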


\begin{proof}
If $\mathsf E_{\alpha,\alpha}^{--}\cup\mathsf E_{\alpha,\alpha}^{++}$
intersects\/ $\mathsf E_{\beta,\overline\alpha}^{--}\cup
\mathsf E_{\beta,\overline\alpha}^{-+}$, arguing as in
the proof of Proposition~\ref{prop:specialdecomp}, we obtain that there exists a relation
of the form $R_\alpha^{p_2}R_\alpha^{p_1}=
\delta R_{\overline\alpha}^{p_4}R_{\beta}^{p_3}$
where $\delta\in\Omega$ and $p_3\in\BV$. 
Thus, the isometry $R_\beta^{p_3}$, with angle pair $(\theta,\theta)$,
admits a $\alpha^{(3)}$-decomposition and, since 
$(\theta,\theta)$ does not lie in\/
$\alttrace^{-1}
(\ell_{\alpha^3}\cup\ell_{\omega\alpha^3}\cup\ell_{\omega^2\alpha^3})$,
by Proposition~\ref{prop:nondiag},
the chamber containing $(\theta,\theta)$ in its closure is full. 

Conversely, if the chamber containing $(\theta,\theta)$ in its closure is
full, then any isometry with angle pair $(\theta,\theta)$ admits an 
$\alpha^{(3)}$-decomposition which implies that 
$\mathsf E_{\alpha,\alpha}^{--}\cup\mathsf E_{\alpha,\alpha}^{++}$ intersects
$\mathsf E_{\beta,\overline\alpha}^{--}\cup\mathsf E_{\beta,\overline\alpha}^{-+}$.
\end{proof}

\begin{figure}[!ht]
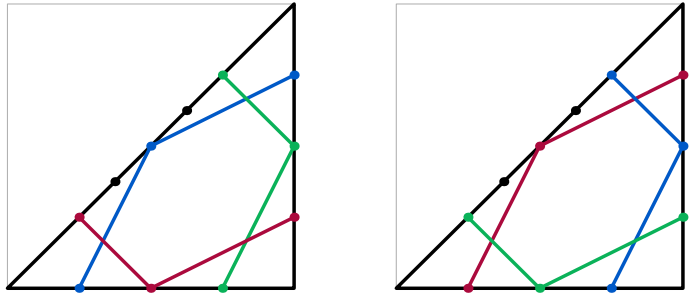

\centering
\includegraphics{pics/abexamp00.mps}
\hspace{1cm}
\includegraphics{pics/abexamp01.mps}
\caption{Walls of the $\alpha^{(3)}$-decomposition, for $\alpha=e^{\frac{\pi}9i}$
(left) and $\alpha=e^{\frac{\pi}3i}$ (right).
}
\label{fig:wallsa3}
\end{figure}

\begin{example}
We apply the previous propositions to obtain the polygonal region
$\mathsf E_{\alpha^{(3)}}$ in the case $\alpha=e^{ai}$, $a=\pi/9$. By
Lemma~\ref{lemma:walls}, $\alttrace^{-1}(\ell_{\alpha^3})$
is given by the two segments connecting the points
$(\pi/2,0),(\pi,\pi),(2\pi,3\pi/2)$; $\alttrace^{-1}(\ell_{\omega\alpha^3})$
is given by the segments connecting $(3\pi/2,3\pi/2),(2\pi,\pi),(3\pi/2,0)$;
and $\alttrace^{-1}(\ell_{\omega^2\alpha^3})$ is given
by the segments connecting $(2\pi,\pi/2)$, $(\pi,0)$, $(\pi/2,\pi/2)$.
These sets are represented in the Figure~\ref{fig:wallsa3} in blue, green, and red,
respectively (see online version).
The central hexagonal chamber is full since it contains the angle pair 
$(\pi/3,\pi/3)$ corresponding to the class where the trace vanishes 
(see Remark~\ref{rmk:tracezero}). Moreover,
Proposition~\ref{prop:nondiag} implies that all but two
chambers are full; the remaining chambers we must check to be 
full or not are those that contain an open segment of the diagonal but don't intersect 
nondiagonal sides.

We take two points in the diagonal, one in each of these regions and not
lying one of the walls, and apply Proposition~\ref{prop:diagchamb}. 
The points $(3\pi/4,3\pi/4)$ and $(5\pi/4,5\pi/4)$ satisfy this condition, and
are marked in Figure~\ref{fig:wallsa3}.

\begin{figure}[h!]
\centering
\includegraphics{pics/abexamp02.mps}
\hspace{1cm}
\includegraphics{pics/abexamp03.mps}
\caption{The sets $\mathsf E_{\beta,\overline\alpha}^{--}\cup
\mathsf E_{\beta,\overline\alpha}^{-+}$ and 
$\mathsf E_{\alpha,\alpha}^{--}\cup
\mathsf E_{\alpha,\alpha}^{++}$ for $\alpha=e^{\frac{\pi}9i}$, 
$\beta=e^{\frac{\pi}{4}i}$ (left) and $\beta=e^{\frac{5\pi}{12}i}$ (right)
}
\label{fig:examppi9}
\end{figure}

So, for $\beta:=e^{\frac{\theta}3i}$, where $\theta=3\pi/4$
or $\theta=5\pi/4$, we need to verify whether the sets 
$\mathsf E_{\beta,\overline\alpha}^{--}\cup\mathsf E_{\beta,\overline\alpha}^{-+}$
and 
$\mathsf E_{\alpha,\alpha}^{--}\cup\mathsf E_{\alpha,\alpha}^{++}$ intersect or
not.
In Figure~\ref{fig:examppi9} we picture these sets;
$\mathsf E_{\beta,\overline\alpha}^{--}\cup
\mathsf E_{\beta,\overline\alpha}^{-+}$ is in light blue/orange,
and the $\mathsf E_{\alpha,\alpha}^{--}\cup\mathsf E_{\alpha,\alpha}^{++}$
is in dark blue/dark green (following the color scheme for each
pair of signs as in Figure~\ref{fig:abdecomptr}; see online version). We see that in both 
cases these sets intersect and both chambers are full.

\begin{figure}[!ht]
\centering
\includegraphics{pics/abexamp04.mps}
\hspace{1cm}
\includegraphics{pics/abexamp05.mps}
\caption{The sets $\mathsf E_{\beta,\overline\alpha}^{--}\cup
\mathsf E_{\beta,\overline\alpha}^{-+}$ and 
$\mathsf E_{\alpha,\alpha}^{--}\cup
\mathsf E_{\alpha,\alpha}^{++}$ for $\alpha=e^{\frac{\pi}3i}$,
$\beta=e^{\frac{\pi}{4}i}$ (left) and
$\beta=e^{\frac{5\pi}{12}i}$ (right)
}
\label{fig:examppi9-2}
\end{figure}

\smallskip

In the case where $a=\pi/3$, the walls are given by the same set of segments as
in the previous case (see Figure~\ref{fig:wallsa3}) but with a cyclic permutation 
of colors. Note that this case is the one of involutions and Figure~\ref{fig:wallsa3} 
and the corresponding figure in~\ref{subsec:a3examples}
are~\cite[Figure~11]{Will2017}. 
Taking the same points in the diagonal side of $\mathcal E$, the sets 
$\mathsf E_{\beta,\overline\alpha}^{--}\cup\mathsf E_{\beta,\overline\alpha}^{-+}$ 
are now given by Figure~\ref{fig:examppi9-2} and we obtain that both corresponding 
chambers are empty.

\end{example}

\subsection{Elliptic isometries that admit an $\alpha^{(3)}$-decomposition}
\label{subsec:a3examples}
Proceeding as in the example above, we determine which chambers are full/empty
for the cases where $a$ is a integral multiple of $\pi/27$, $0<a<2\pi/3$.
(This choice will be clear in the proof of the main theorem.)
The result is given by the following pictures. For each value of $\alpha$,
the chambers in gray are full and the chambers in white are empty.
The union of the chambers in gray is the region $\mathsf E_{\alpha^{(3)}}$ for the given
value of $\alpha$.

\begin{center}
\includegraphics[scale=1]{pics/fullchambers011.mps}
\hspace{.3cm}
\includegraphics[scale=1]{pics/fullchambers021.mps}
\hspace{.3cm}
\includegraphics[scale=1]{pics/fullchambers031.mps}

\vspace{.3cm}

\includegraphics[scale=1]{pics/fullchambers041.mps}
\hspace{.3cm}
\includegraphics[scale=1]{pics/fullchambers051.mps}
\hspace{.3cm}
\includegraphics[scale=1]{pics/fullchambers061.mps}

\vspace{.3cm}

\includegraphics[scale=1]{pics/fullchambers071.mps}
\hspace{.3cm}
\includegraphics[scale=1]{pics/fullchambers081.mps}
\hspace{.3cm}
\includegraphics[scale=1]{pics/fullchambers091.mps}

\vspace{.3cm}

\includegraphics[scale=1]{pics/fullchambers101.mps}
\hspace{.3cm}
\includegraphics[scale=1]{pics/fullchambers111.mps}
\hspace{.3cm}
\includegraphics[scale=1]{pics/fullchambers121.mps}

\vspace{.3cm}

\includegraphics[scale=1]{pics/fullchambers131.mps}
\hspace{.3cm}
\includegraphics[scale=1]{pics/fullchambers141.mps}
\hspace{.3cm}
\includegraphics[scale=1]{pics/fullchambers151.mps}

\vspace{.3cm}

\includegraphics[scale=1]{pics/fullchambers161.mps}
\hspace{.3cm}
\includegraphics[scale=1]{pics/fullchambers171.mps}
\end{center}

\section{The $\alpha$-length}
We are now able to prove our main theorem. We do so using the
hypothesis that the parameter $\alpha\in\mathbb S^1\setminus\Omega$
is such that $\alpha=e^{ai}$ for some $0<a<\frac{2\pi}3$ which, 
by Corollary~\ref{cor:deltaalpha}, can be assumed without loss of 
generality.

\begin{proof}[Proof of Theorem~\ref{thm:alphalength}]
We start by proving that every isometry that does not admit an $\alpha^{(3)}$-decomposition,
admits an $\alpha^{(4)}$-decomposition. By the results in the
previous sections, we only need to prove this to those elliptic isometries whose
conjugacy classes lie in empty chambers and for $2$-step unipotent isometries.

First, let $F\in\SU(2,1)$ be an elliptic isometry representing a 
$\PU(2,1)$-conjugacy class whose projection under does not lie in $\mathsf E_{\alpha^{(3)}}$.
There exists a point $p\in\PV\setminus\SV$ such that
$R_{\overline\alpha}^{p}F$ is loxodromic. 
In the case where $F$ is special elliptic, this follows 
from~\cite[Corollary~5.10]{spell}.
Suppose $F$ is regular elliptic. Let $c\in\BV$ be the negative $F$-fixed point and let
$L$ be an $F$-stable complex line through $c$.
Denote by $\theta$ the angle in which $F$ rotates points in $L$ around~$c$. 
Given a nonisotropic point $p\in L\cap\BV$, consider the isometry $R:=R_{\overline\alpha}^qF$.
As $R$ acts over $L$ as an isometry of the Poincar\'e disk, the action of $R$ over $L$ can be 
decomposed as the product $r_2r_1$ of reflections $r_1,r_2$ over geodesics $G_1,G_2$ 
through points $c,p$, respectively. If the (dis)tance between $c$ and $p$ is big enough,
the geodesics $G_1,G_2$ are ultraparallel (do not intersect, not even in the absolute $\SV$)
and $R$ acts on $L$ as a hyperbolic isometry of the Poincar\'e disk; thus $R$ is loxodromic.
By Proposition~\ref{prop:parlox3},
$R$ admits an $\alpha^{(3)}$-decomposition, which implies that $F$ admits an
$\alpha^{(4)}$-decomposition.

Now, consider a $2$-step unipotent isometry $U\in\SU(2,1)$. Let $v$ be the isotropic
fixed point of~$U$; then for any nonisotropic point $q\in\mathbb Pv^\perp$,
the isometry $R_{\overline\alpha}^q U$ is ellipto parabolic. It follows
from Proposition~\ref{prop:parlox3} that the isometry $U$ admits an 
$\alpha^{(4)}$-decomposition.  

\smallskip

To prove the second part of the theorem, we use Subsection~\ref{subsec:a3examples}. 
Note that, since the lines $\ell_{\alpha^3}$, $\ell_{\omega\alpha^3}$, and 
$\ell_{\omega^2\alpha^3}$ vary continuously with $\alpha$ and $\alttrace$
is a homeomorphism, it follows that the chambers of $\mathsf E_{\alpha^{(3)}}$ 
vary continuously with~$\alpha$.
Moreover, since the line segments that compose $\mathsf E_{\alpha_1,\alpha_2}$
vary continuously with~$\alpha_1,\alpha_2$ (see Proposition~\ref{prop:a1a2eldecomp}), 
the criteria to determine if a chamber is full/empty
(Propositions~\ref{prop:nondiag} and \ref{prop:diagchamb}) is also continuous, 
i.e., if a chamber of $\mathsf E_{\alpha^{(3)}}$ is full/empty for a given $\alpha$, it 
continues to be full/empty for 
parameters sufficiently close to $\alpha$. 
It follows that if a chamber if full/empty, it continues to be full/empty until 
it disappears. So, we need to determine the transition parameters (the parameters
where chambers appear or disappear).

The transition parameters are those in which the lines $\ell_{\alpha^3},
\ell_{\omega\alpha^3},\ell_{\omega^2\alpha^3}$ either pairwise intersect
at a point where one of then is tangent to $\partial\Delta$ or
all intersect at the same point (in this case such point must be $0\in\mathbb C$ and
the lines are tangent to a vertex of $\partial\Delta$). In the first case,
by Proposition~\ref{prop:traceeigen}, we must have $\alpha^3=\omega\alpha^{-6}$ 
which implies that $a=0\,(\mathrm{mod}\,\frac{2\pi}{27})$. In the second case, we must have
$a=0\,(\mathrm{mod}\,\frac{2\pi}9)$. Thus, the transition angles are those satisfying
$a\neq 0\,(\mathrm{mod}\,\frac{2\pi}{27})$. In other words,
if we are looking at the chambers of $\mathsf E_{\alpha^{(3)}}$
while continuously increasing the value of $a$ (remember that $\alpha=e^{ai}$),
chambers appear or disappear while passing through a parameter such that
$a=0\,(\mathrm{mod}\,\frac{2\pi}{27})$.

Therefore, the second part follows from the cases we obtained in 
Subsection~\ref{subsec:a3examples}; the last part is just 
Proposition~\ref{prop:2stepdecomp}.
\end{proof}

As the isometries that contribute to the $\alpha$-length, $\alpha=e^{ai}$, 
not being~$3$ when $0<a<\frac{4\pi}{27}$ are only, possibly, the $2$-step 
unipotent ones (see Remark~\ref{rmk:2stepdecomp}), which are not semisimple, we 
have the following result (see Subsection~\ref{subsec:puclasses} for definitions). 

\begin{cor}
The\/ $\alpha$-length of the space of semisimple conjugacy classes in\/ $\mathcal G$ is

$\bullet$ $3$, if\/ $0<a<\frac{4\pi}{27}$ or\/ $\frac{14\pi}{27}<a<\frac{2\pi}{3}$;

$\bullet$ $4$, if\/ $\frac{4\pi}{27}\leq a\leq\frac{14\pi}{27}$.
\end{cor}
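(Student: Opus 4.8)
## Proof proposal for the Corollary

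The plan is to combine the main theorem (Theorem~\ref{thm:alphalength}) with the structural observation that $2$-step unipotent isometries are the only isometries whose $\alpha$-length might exceed $3$ in the ``small-$a$'' and ``large-$a$'' ranges, together with the fact that $2$-step unipotent isometries are not semisimple. First I would recall that the $\alpha$-length of the space of semisimple conjugacy classes is, by definition (see Subsection~\ref{subsec:puclasses} and Definition~\ref{defi:adecomp}), the maximum of the $\alpha$-lengths taken over all semisimple classes in $\mathcal G$ — that is, over loxodromic and elliptic classes (including the identical one and special elliptic classes).

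For the first bullet, suppose $0<a<\frac{4\pi}{27}$ or $\frac{14\pi}{27}<a<\frac{2\pi}{3}$. By Theorem~\ref{thm:alphalength}, every isometry \emph{except possibly $2$-step unipotent ones} admits an $\alpha^{(3)}$-decomposition. Since $2$-step unipotent isometries are not semisimple (they are unipotent parabolic, hence not diagonalizable), every semisimple isometry admits an $\alpha^{(3)}$-decomposition, so the $\alpha$-length of the space of semisimple classes is at most $3$. To see it is exactly $3$, it suffices to exhibit one semisimple class with $\alpha$-length greater than $2$: by the description in Section~\ref{sec:length2decomp}, the classes of $\alpha$-length $2$ are confined to $\ell_{\alpha^2}$ (loxodromic case) and to the region $\mathsf E_{\alpha^{(2)}}$ inside $\rho(\mathcal E)$, which is a proper subset of $\rho(\mathcal E)$; hence there exist regular elliptic classes — for instance, any class whose angle pair lies outside $\mathsf E_{\alpha^{(2)}}$ — that are not $\alpha$-length $2$ but, by Proposition~\ref{prop:parlox3} or the full-chamber analysis, are $\alpha$-length $3$. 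This gives the lower bound, so the $\alpha$-length equals $3$ in this range.

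For the second bullet, suppose $\frac{4\pi}{27}\leq a\leq\frac{14\pi}{27}$. Here Theorem~\ref{thm:alphalength} asserts that for $\frac{4\pi}{27}<a<\frac{14\pi}{27}$ there exist regular and special elliptic isometries that do not admit an $\alpha^{(3)}$-decomposition; these isometries are semisimple, so the $\alpha$-length of the semisimple classes is at least $4$ for $a$ strictly inside the interval. At the two endpoints $a=\frac{4\pi}{27}$ and $a=\frac{14\pi}{27}$ one must check that an empty chamber still persists; this follows from the continuity argument in the proof of Theorem~\ref{thm:alphalength} — the full/empty status of a chamber only changes at transition parameters $a\equiv 0\pmod{\frac{2\pi}{27}}$, and the tables in Subsection~\ref{subsec:a3examples} show that the relevant chamber is empty on a closed neighborhood of each endpoint within the interval (equivalently, the endpoint values themselves appear among the listed cases with an empty chamber containing a piece of the diagonal). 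Combined with Theorem~\ref{thm:alphalength}, which guarantees that \emph{every} isometry admits an $\alpha^{(4)}$-decomposition (the $\alpha$-length of $\PU(2,1)$ is $4$), we conclude that the $\alpha$-length of the semisimple classes is exactly $4$ in this closed range.

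The main obstacle I anticipate is the boundary behavior at $a=\frac{4\pi}{27}$ and $a=\frac{14\pi}{27}$: one needs the empty chamber to be genuinely present (not shrunk to a single point or absorbed into a wall) precisely at these endpoint parameters. This is resolved by appealing to the explicit full/empty tabulation in Subsection~\ref{subsec:a3examples}, which was computed exactly for $a$ a multiple of $\frac{\pi}{27}$ — in particular at $a=\frac{4\pi}{27}$ and $a=\frac{14\pi}{27}$ — so the endpoint cases are covered directly rather than by a limiting argument. The rest of the corollary is a routine bookkeeping of which classes are semisimple and an invocation of the already-established Theorem~\ref{thm:alphalength}.
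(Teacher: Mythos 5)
Your overall strategy is the same as the paper's, which disposes of the corollary in a single sentence: semisimple classes are exactly the loxodromic and elliptic ones, $2$-step unipotent isometries are not semisimple, so Theorem~\ref{thm:alphalength} does almost all the work. Your first bullet is fine and in fact more complete than the paper, since you also supply the lower bound (a semisimple class of $\alpha$-length exactly $3$ exists because $\mathsf E_{\alpha^{(2)}}$ is a finite union of segments, hence a proper subset of $\rho(\mathcal E)$, and every class outside it but inside the full chambers has length exactly $3$).

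The gap is in your treatment of the endpoints $a=\frac{4\pi}{27}$ and $a=\frac{14\pi}{27}$ in the second bullet. Theorem~\ref{thm:alphalength}, as stated, asserts that for $0<a\leq\frac{4\pi}{27}$ and $\frac{14\pi}{27}\leq a<\frac{2\pi}{3}$ \emph{every} isometry except possibly the $2$-step unipotent ones admits an $\alpha^{(3)}$-decomposition; the existence of elliptic classes without such a decomposition is asserted only on the open interval $\frac{4\pi}{27}<a<\frac{14\pi}{27}$. If the theorem is taken at face value, the semisimple $\alpha$-length at $a=\frac{4\pi}{27}$ is $3$, not $4$, and the corollary's closed interval is wrong (or the theorem's closed intervals are). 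Your proposal invokes the theorem for the upper bound and simultaneously claims, via the tables of Subsection~\ref{subsec:a3examples}, that an empty chamber persists \emph{at} the endpoints --- two claims that cannot both hold. The endpoints are precisely the transition parameters ($a\equiv 0 \pmod{\frac{2\pi}{27}}$) at which a chamber degenerates, so "the chamber is empty on a closed neighborhood within the interval" does not follow from the continuity argument, which only controls the status of a chamber strictly between transitions; whether the degenerate chamber at the transition still carries classes outside $\mathsf E_{\alpha^{(3)}}$ (rather than collapsing into the walls, whose classes may admit reducible decompositions) is exactly the point that needs an independent check. You should either verify directly from the $a=\frac{4\pi}{27}$ and $a=\frac{14\pi}{27}$ entries of the tabulation which of the two boundary assignments is correct, or at least state explicitly which of the two (mutually inconsistent) interval conventions in the paper you are adopting; as written, the argument proves the second bullet only on the open interval.
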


\bibliographystyle{plain}
\bibliography{references-alpha-final}

\noindent
{\sc Felipe A.~Franco}

\noindent
{\sc Departamento de Matem\'atica, ICMC, Universidade de S\~ao Paulo, Brasil}

\noindent
\url{felipefranco@usp.br}, \url{f.franco.math@gmail.com}

\end{document}